\documentclass[11pt,british,reqno]{amsart}

\usepackage[T1]{fontenc}

\usepackage{babel,eucal,url,amssymb,enumerate,booktabs,amscd,graphics}

\textwidth=15,5cm \oddsidemargin=-0.4cm \evensidemargin=-0.4cm

\theoremstyle{plain}
\newtheorem{lemma}{Lemma}[section]
\newtheorem{definition}[lemma]{Definition}
\newtheorem{proposition}[lemma]{Proposition}
\newtheorem{corollary}[lemma]{Corollary}
\newtheorem{theorem}[lemma]{Theorem}
\newtheorem{remark}[lemma]{Remark}

\newcommand{\cyclic}{\mathop{\text{\large$\mathfrak S$}\vrule width 0pt depth 2pt}}

\def\sideremark#1{\ifvmode\leavevmode\fi\vadjust{\vbox to0pt{\vss
 \hbox to 0pt{\hskip\hsize\hskip1em
 \vbox{\hsize2.5cm\tiny\raggedright\pretolerance10000
 \noindent #1\hfill}\hss}\vbox to8pt{\vfil}\vss}}}%


\input xy
\xyoption{all}



\title[Invariant contact metric structures]{All invariant contact metric structures on tangent sphere bundles of compact rank-one symmetric spaces}

\author[J.~C.~Gonz{\'a}lez-D{\'a}vila]{J.~C.~Gonz{\'a}lez-D{\'a}vila}
\address{Departamento de Matem\'aticas, Estad\'istica e Investigaci\'on
Ope\-ra\-tiva, University of La Laguna, 38200 La Laguna, Tenerife, Spain.}
\email{jcgonza@ull.es}

\thanks{Partially supported by grant PID2019-105019GB-C21 funded by MCIN/AEI/ 10.13039/501100011033 and by ERDF `A way of making Europe'}

\keywords{Homogeneous contact metric manifolds, Sasakian structures, $3$-Sasakian structures, Einstein metrics, tangent sphere bundles, compact rank-one symmetric spaces}
\subjclass{53C30, 
               53C35,  
               53D10.  
}



\begin{document}

\maketitle

\begin{abstract} 
All invariant contact metric structures on tangent sphere bundles of each compact rank-one symmetric space are obtained explicitly, distinguishing for the orthogonal case those that are $K$-contact, Sasakian or $3$-Sasakian. Only the tangent sphere bundle $T_{r}{\mathbb C}{\bf P}^{n}$ $(r>0)$ of complex projective spaces admits $3$-Sasakian metrics and there exists a unique orthogonal Sasakian-Einstein metric on $T_{r}{\mathbb C}{\bf P}^{n}.$ Furthermore, there is a unique invariant contact metric that is Einstein, in fact Sasakian-Einstein, on tangent sphere bundles of spheres and real projective spaces.

Each invariant contact metric, Sasakian, Sasakian-Einstein or $3$-Sasakian structure on the unit tangent sphere of any compact rank-one symmetric space is extended, respectively, to an invariant almost K\"ahler, K\"ahler, K\"ahler Ricci-flat or hyperK\"ahler structure on the punctured tangent bundle.
\end{abstract}

\section{Introduction}

 Compact rank-one Riemannian symmetric spaces can be characterized as those Riemannian symmetric spaces $(G/K,g)$ with strictly positive curvature. They are Euclidean spheres, real, complex and quaternionic projective spaces and the Cayley plane \cite[Ch. 3]{Be1}. The natural $G$-action on the tangent bundle $T(G/K)$ of $G/K$ is of cohomogeneity one with principal orbits the tangent sphere bundles $T_{r}(G/K )= \{u\in T(G/K)\colon g(u,u) = r^{2}\}$ of radius $r>0$ (see \cite{JC}). Hence, fixed a Cartan decomposition ${\mathfrak g} = {\mathfrak k}\oplus {\mathfrak m}$ of $G/K$ and a (one-dimensional) Cartan subspace ${\mathfrak a}$ of ${\mathfrak m},$ 
 the punctured tangent bundle $D(G/K):= T(G/K)\setminus \{\mbox{zero section}\}$ of $T(G/K)$ is identified with $G/H\times {\mathbb R}^{+}$ and $T_{r}(G/K),$ for each $r>0,$ with the quotient $G/H$ via $G$-equivariant diffeomorphisms, where $H$ is the closed subgroup of $K$ given by $H = \{k\in K\colon {\rm Ad}_{k} u=u,\;\mbox{\rm for all}\,u\in {\mathfrak a}\}.$ The quotient expressions $G/K$ as symmetric spaces and the Lie groups $H$ are listed in {\rm Table I}. 
 
 Then the tangent sphere bundle $T_{r}{\mathbb S}^{n}$ of the unit $n$-dimensional sphere ${\mathbb S}^{n}\subset {\mathbb R}^{n+1}$ is equivariantly diffeomorphic to the Stiefel manifold $V_{2}({\mathbb R}^{n+1})$ of all orthonormal $2$-frames in ${\mathbb R}^{n+1}$ and $T_{r}{\mathbb K}{\bf P}^{n},$ where ${\mathbb K}{\bf P}^{n}$ is the $n$-dimensional projective space for ${\mathbb K} = {\mathbb R},$ ${\mathbb C}$ or ${\mathbb H},$ to the projective Stiefel manifold $W_{2}({\mathbb K}^{n+1}): = V_{2}({\mathbb K}^{n+1})/S,$ with $S$ being the subset of unit vectors of ${\mathbb K}$ $(S = {\mathbb S}^{0},$ ${\mathbb S}^{1}$ or ${\mathbb S}^{3}).$   
 
 In \cite{JC1} the author has obtained the set of those $G$-invariant contact metric and Sasakian structures on $T_{r}(G/K) = G/H$ whose characteristic vector field is the {\em standard vector field} $\xi^{S}$ of $T(G/K),$ that is, the unit vector field $\xi^{S}: =-J^{S}N$ defined on $D(G/K),$ where $J^{S}$ is the standard almost complex structure and $N$ is the outward normal unit vector field to the singular foliation $\{T_{r}(G/K)\}_{r>0}$ with respect to the Sasaki metric $g^{S}$ on $T(G/K).$ 
 
   \bigskip
\begin{tabular}{llllll}
\multicolumn{6}{c}{Table I. Compact rank-one symmetric spaces}\\
\hline\noalign{\smallskip}
& $G/K$
& {$\dim$}
& $m_\varepsilon$
& $m_{\varepsilon/{\scriptscriptstyle 2}}$ & $H$ \\
\noalign{\smallskip}\hline\noalign{\smallskip}

$\mathbb{S}^n\,(n\geq 2)$ & $\mathrm{SO}(n{+}1)/\mathrm{SO}(n)$ & $n$ & $n{-}1$ & $0$ & ${\mathrm S}{\mathrm O}(n{-}1)$\\ \smallskip

${\mathbb R}{\mathbf P}^n\, (n\geq 2)$ & 
 $\mathrm{SO}(n{+}1)/{\rm S}({\rm O}(1)\times {\rm O}(n))$ &
$n$ & $n{-}1 $ & $0$ & ${\mathrm S}({\mathrm O}(1)\times {\mathrm O}(n{-}1))$ \\ \smallskip

${\mathbb C}{\mathbf P}^n$\,{$(n \geq 2)$}
&{$\mathrm{SU}(n{+}1)/\mathrm{S}(\mathrm{U}(1){\times}
\mathrm{U}(n))$}& $2n$
& $1$ & $2n{-}2$ & ${\mathrm S}({\mathrm U}(1)\times {\mathrm U}(n{-}1))$ \\ \smallskip
${\mathbb H}{\mathbf P}^n$\,{$(n\geq 1)$}
& {$\mathrm{Sp}(n{+}1)/\mathrm{Sp}(1){\times} \mathrm{Sp}(n)$} & $4n$
& $3$ & $4n{-}4$ & ${\mathrm S}{\mathrm p}(1)\times {\mathrm S}{\mathrm p}(n{-}1)$ \\
\smallskip
${\mathbb C\mathrm a}{\mathbf P}^2$
& {$\mathrm{F_4}/\mathrm{Spin(9)}$} & $16$ & $7$
& 8 & $\mathrm{Spin}(7)$ \\
\hline
\end{tabular}

\vspace{0.5cm}

A first objective of this paper is focused on explicitly obtaining {\em all} the $G$-invariant contact metric structures in $T_{r}(G/K)$ and on determining for the orthogonal case those that are $K$-contact, Sasakian or $3$-Sasakian. This leads us to consider the space of $G$-invariant vector fields in $T_{r}(G/K). $ Such space is one-dimensional except for $G/K = {\mathbb C}{\bf P}^{n}$ whose dimension is three. From this, the characteristic vector field of $G$-invariant contact metric structures on $T_{r}G/K,$ for $G/K\in \{{\mathbb S}^{n},{\mathbb R}{\bf P}^{n}(n\geq 3),{\mathbb H}{\bf P}^{n} (n\geq 1), {\mathbb C}{\rm a}{\bf P}^{2}\},$ is proportional to the standard field and, for $G/K\in \{{\mathbb C}{\bf P}^{n}\,(n\geq 1)\},$ the choice of possible characteristic vector fields produces seven different types of invariant contact metric structures, listed as AI-AIII, BI-BIII and C in Theorem \ref{maingeneralCP1}. Furthermore, those orthogonal structures that are $K$-contact (see Theorem \ref{maingeneral} for their classification) are indeed Sasakian (Theorem \ref{t1}) and, on $T_{r}{\mathbb C}{\bf P}^{n},$ the set of these $G$-invariant Sasakian structures form three families, depending on a positive parameter, of type AI, AII and AIII, respectively.

Boyer, Galicki and Mann in \cite{BGM} obtained the classification of homogeneous 3-Sasakian ma\-ni\-folds (see \cite{GRS} for a new proof using theory of root systems). Comparing the lists of homogeneous $3$-Sasakian manifolds and of quotients $G/H =T_{r}G/K,$ from Table I, only ${\mathrm S}{\mathrm p}(1)\cong {\mathbb S}^{3}$ and ${\mathrm S}{\mathrm U}(n+1)/{\mathrm S}({\mathrm U}(1)\times {\mathrm U}(n-1)),$ $n\geq 2,$ are homogeneous $3$-Sasakian manifolds. Hence $T_{r}G/K$ admits a $G$-invariant $3$-Sasakian structure if and only if $G/K= {\mathbb C}{\bf P}^{n},$ for some $n\geq 1.$ In Theo\-rem \ref{t1}, by a direct procedure, the same result is reached, also obtaining that there is a unique orthogonal $3$-Sasakian metric ${\bf g}$ on $T_{r}{\mathbb C}{\bf P}^{n}.$ Its expression, as well as a triple of charac\-te\-ristic vector fields, are explicitly determined there. Moreover, ${\bf g}$ is the unique orthogonal metric on $T_{r}{\mathbb C}{\bf P}^{n}$ which is Sasakian-Einstein (Theorem \ref{t2}).

On the other hand, Back and Hsiang in \cite{BaHs} proved that $V_{2}({\mathbb R}^{n+1})$ admits invariant Einstein metrics and that any two homogeneous Einstein metrics are homothetic. The same result was obtained by M. Kerr in \cite{Ke}. In this paper, apart from explicitly obtaining such metrics for $T_{r}{\mathbb S}^{n}$ and $T_{r}{\mathbb R}{\bf P}^{n}$ (Theorem \ref{teinstein1}), it is proven that there exists a unique invariant contact metric which is Einstein and that it is in fact Sasakian-Einstein (Theorem \ref{teinsteinS1}).

Another relevant fact that we raise is that, because these classes of contact metric structures can be characterized by using holonomy reduction of its metric cone (see \cite{BoyerGalicki:3Sasakian}, \cite{BoyerGalicki} and Proposition \ref{pcontact}), each $G$-invariant contact metric, Sasakian, Sasakian-Einstein or $3$-Sasakian structure on the unit tangent sphere $T_{1}(G/K),$ can be extended respectively to an $G$-invariant almost K\"ahler, K\"ahler, K\"ahler Ricci-flat or hyperK\"ahler structure on the punctured tangent bundle $D(G/K)$ (Theorem \ref{tmain}). However, we must point out that such structures in $D(G/K)$ do not admit differentiable extensions to all $T(G/K)$ (see Remark \ref{rex}).

 To achieve our objectives, we consider the system $\Sigma$ of restricted roots of $({\mathfrak g},{\mathfrak k},{\mathfrak a}),$ which is either $\{\pm \varepsilon\}$ for spheres and real projective spaces or $\Sigma=\{\pm\varepsilon,\pm\varepsilon/2\}$ for the rest of compact rank-one symmetric spaces, $\varepsilon$ being a linear function on the complexification ${\mathfrak a}^{\mathbb C}$ of ${\mathfrak a},$ such that $\varepsilon({\mathfrak a})\subset \sqrt{-1}{\mathbb R}$ (see \cite{JC} for more details). The multiplicities $m_\varepsilon$, $m_{\varepsilon/{\scriptscriptstyle 2}}$ of the corresponding restricted roots appear in Table I.

\section{Preliminaries}\label{Preliminares}
 
\setcounter{equation}{0}

We briefly recall some basic concepts about almost contact metric structures and their most significant classes. For general theory on almost contact metric structures we refer the reader to \cite{Bl} and for their classification to \cite{ChG}.

An odd-dimensional $C^{\infty}$ manifold $M$ is said to be {\em almost contact} if it admits a $(\varphi,\xi,\eta)$-structure, where $\varphi$ is a tensor field of type $(1,1),$ $\xi$ is a vector field and $\eta$ is a $1$-form, such that
\[
\varphi^{2} = -{\rm I} + \eta\otimes \xi,\quad \eta(\xi) = 1.
\]
Then $\varphi\xi = 0$ and $\eta\circ\varphi =0.$ If moreover $M$ is equipped with a Riemannian metric $g$ such that
\[
g(\varphi U,\varphi V) = g(U,V) - \eta(U)\eta(V),
\]
for all $U,V\in {\mathfrak X}(M),$ where ${\mathfrak X}(M)$ is the Lie algebra of the vector fields on $M,$ $(M,\varphi,\xi,\eta,g)$ is called an {\em almost contact metric manifold}. Two almost contact metric manifolds $(M_{1},\varphi_{1},\xi_{1},$ $\eta_{1},g_{1})$ and $(M_{2},\varphi_{2},\xi_{2},\eta_{2},g_{2})$ is said to be i{\em somorphic} if there exists an isometry $f\colon (M_{1},g_{1})\to (M_{2},g_{2})$ preserving the corresponding structures, i.e. $f_{*}\xi_{1} = \xi_{2}$ and $f_{*}\circ \varphi_{1} = \varphi_{2}\circ f_{*}.$ (Then, $f^{*}\eta_{2} = \eta_{1}.)$

Let $\nabla$ be the Levi-Civita connection of $(M,g).$ Then, we have
\begin{equation}\label{nablaPhi}
(\nabla_{U}\Phi)(V,W) = g(V,(\nabla_{U}\varphi)W),\quad (\nabla_{U}\eta) V = g(V,\nabla_{U}\xi) = (\nabla_{U}\Phi)(\xi,\varphi V),
\end{equation}
where $\Phi$ denotes the {\em fundamental $2$-form}, defined by $\Phi(U,V) = g(U,\varphi V),$ and the exterior de\-ri\-va\-tives of $\eta$ and $\Phi$ are given by
\begin{equation}\label{varphi1}
2d\eta(U,V) = (\nabla_{U}\eta)V - (\nabla_{V}\eta)U,\quad 3d\Phi(U,V,W) = \cyclic_{U,V,W}(\nabla_{U}\Phi)(V,W).
\end{equation}
We say that $(\varphi,\xi,\eta,g)$ is a {\em contact metric} structure if $d\eta= \Phi.$ Then it is determined by the pair $(\xi,g)$ (or $(\eta,g)).$ A Riemannian metric $g$ is said to be {\em contact} if there exists a vector field $\xi$ such that $(\xi,g)$ is a contact metric structure and to each of these vector fields $\xi,$  a {\em characteristic vector field} associated with $g.$ If, in addition, $\xi$ is Killing, the manifold is called {\em $K$-contact} and one gets
\begin{equation}\label{varphiK}
\nabla_{U}\xi = -\varphi U.
\end{equation}

An almost contact structure $(\varphi,\xi,\eta)$ is said to be {\em normal} if the $(1,1)$-tensor field ${\mathbf N}(U,V) = [\varphi,\varphi](U,V) + 2{\rm d}\eta(U,V)\xi$ vanishes for all $U,V\in {\mathfrak X}(M),$ where $[\varphi,\varphi]$ is the {\em Nijenhuis torsion} of $\varphi,$ 
 \begin{equation}\label{Nijenhuis}
 [\varphi,\varphi] (U,V) = \varphi^{2}[U,V] + [\varphi U,\varphi V] - \varphi[\varphi U,V] - \varphi[U,\varphi V].
 \end{equation}
 A contact metric structure $(\xi, g)$ which is normal is called a {\em Sasakian structure}. A useful characterization for Sasakian manifolds is the following: An almost contact metric manifold $(\varphi,\xi,\eta,g)$ is Sasakian if and only if  
  \begin{equation}\label{Sasakian}
 (\nabla_{U}\varphi)V = g(U,V)\xi - \eta(V)U,\quad U,V\in {\mathfrak X}(M).
 \end{equation}
Any Sasakian manifold is $K$-contact but the converse may be not true for dimensions greater than three. A Sasakian manifold $(M,\xi,g)$ is {\em Sasakian-Einstein} if its Riemannian metric $g$ is Einstein. Then on a $(2n+1)$-dimensional Sasakian-Einstein manifold, the Ricci tensor ${\rm Ric}$ satisfies ${\rm Ric}(U, \xi) = 2n\eta(U)$ and the scalar curvature is positive and equals to $2n(2n+1).$  
 
 A Riemannian manifold $(M,g)$ is called {\em $3$-Sasakian} if it admits three vector fields $\{\xi_{1},\xi_{2},\xi_{3}\}$ such that
 \[
 g(\xi_{i},\xi_{j}) = \delta_{ij},\quad [\xi_{i},\xi_{j}] = 2\varepsilon_{ijk}\xi_{k},\quad i,j,k\in \{1,2,3\},
 \]
 and $(M,g,\xi_{i})$ is a Sasakian manifold, for each $i=1,2,3.$ Because $(M,g,\xi)$ is a Sasakian manifold, for any $\xi = \lambda_{1}\xi_{1} + \lambda_{2}\xi_{2} + \lambda_{3}\xi_{3}$ where $(\lambda_{1},\lambda_{2},\lambda_{3})\in {\mathbb S}^{2}(1),$ a $3$-Sasakian manifold has in fact an ${\mathbb S}^{2}(1)$ worth of Sasakian structures.
 
 A more recent characterization for Sasakian, Sasakian-Einstein and $3$-Sasakian manifold by using holonomy reduction of the associated metric cone appears in \cite{BoyerGalicki:3Sasakian}. The reader is also referred to \cite{BoyerGalicki:Sasakian-Einstein}, \cite{BoyerGalicki}, \cite{Sparks} and the monograph \cite{BoyerGalickilibro} for more information about this topic.
 
 Given an $m$-dimensional Riemannian manifold $(M,g),$ the {\em metric cone} $({\mathcal C}(M),\overline{g})$ is the warped product $M\times_{{\rm Id}_{{\mathbb R}^{+}}}{\mathbb R}^{+},$ that is the product manifold ${\mathcal C}(M)=M\times {\mathbb R}^{+}$ equipped with the Riemannian metric $\overline{g} = r^{2}g + dr^{2}.$ Then $(M,g)$ is Sasakian if the holonomy group of $({\mathcal C}(M),\overline{g})$ reduces to a subgroup of ${\mathrm U}(\frac{m+1}{2}).$ In particular, $m = 2n + 1,$ $n \geq 1,$ and $({\mathcal C}(M),\overline{g})$ is K\"ahler. For the proof, it is considered the almost Hermitian structure $(J,\overline{g})$ on ${\mathcal C}(M),$ associated to an almost contact metric structure $(\varphi,\xi,\eta)$ on $(M,g),$ where $J$ is given by
\begin{equation}\label{JJ}
J(U,\lambda \frac{\partial}{\partial r}) = (\varphi U - \frac{\lambda}{r}\xi,r\eta(U)\frac{\partial}{\partial r}),\quad \mbox{\rm for all}\;\lambda\colon {\mathbb R}^{+}\to {\mathbb R}\;\mbox{\rm smooth function.}
\end{equation}

\begin{proposition}\label{pcontact} A Riemannian manifold $(M,g)$ is contact metric if and only if the metric cone $({\mathcal C}(M),\overline{g})$ is almost K\"ahler. Moreover, the symplectic $2$-form $F$ in ${\mathcal C}(M)$ is the exact form $F = d(r^{2}\eta),$ where $\eta$ is the contact form of $M.$
\end{proposition}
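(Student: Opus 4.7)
The strategy is to reduce both directions of the equivalence and the \emph{moreover} clause to the single algebraic identity
\begin{equation*}
F - d(r^{2}\eta) = r^{2}(\Phi - d\eta)
\end{equation*}
of $2$-forms on $\mathcal{C}(M)$, where $F(X,Y)=\overline{g}(X,JY)$ denotes the fundamental $2$-form of $(J,\overline{g})$. First I would verify that $(J,\overline{g})$ defined by \eqref{JJ} is always almost Hermitian on $\mathcal{C}(M)$, irrespective of the contact condition: $J^{2}=-\mathrm{I}$ follows by expanding $J\circ J$ with \eqref{JJ} and using $\varphi^{2}=-\mathrm{I}+\eta\otimes\xi$, $\varphi\xi=0$, $\eta\circ\varphi=0$, while the compatibility $\overline{g}(JX,JY)=\overline{g}(X,Y)$ reduces, via $\overline{g}=r^{2}g+dr^{2}$, to $g(\varphi U,\varphi V)=g(U,V)-\eta(U)\eta(V)$ together with $g(\xi,\,\cdot\,)=\eta(\,\cdot\,)$.

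Next, for tangent vectors $X=U+\lambda\partial_{r}$ and $Y=V+\mu\partial_{r}$ decomposed into horizontal and vertical parts, direct substitution in \eqref{JJ} and the warped-product form of $\overline{g}$ gives
\begin{equation*}
F(X,Y)=r^{2}\Phi(U,V)+r\lambda\,\eta(V)-r\mu\,\eta(U).
\end{equation*}
Treating $r^{2}\eta$ as a $1$-form on $\mathcal{C}(M)$ (with $\eta$ annihilating $\partial_{r}$) and using Cartan's formula for $d$ in the convention \eqref{varphi1}, one obtains
\begin{equation*}
d(r^{2}\eta)(X,Y)=r^{2}d\eta(U,V)+r\lambda\,\eta(V)-r\mu\,\eta(U);
\end{equation*}
here I would extend $U,V$ to $r$-independent basic vector fields and regard $\lambda,\mu$ as constants at the point of evaluation, so that $[X,Y]=[U,V]$, noting that both sides are tensorial. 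Subtracting the two expressions produces the displayed identity.

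From this identity the equivalence and the \emph{moreover} clause follow at once. If $d\eta=\Phi$ on $M$, then $F=d(r^{2}\eta)$ is exact, hence closed; combined with the first step this shows $(\mathcal{C}(M),J,\overline{g})$ is almost K\"ahler with symplectic $2$-form $F=d(r^{2}\eta)$. Conversely, if $(\mathcal{C}(M),J,\overline{g})$ is almost K\"ahler, then $dF=0$, so the identity forces $d\bigl[r^{2}(\Phi-d\eta)\bigr]=0$; expanding this as
\begin{equation*}
0 = 2r\,dr\wedge(\Phi-d\eta)+r^{2}d_{M}(\Phi-d\eta)
\end{equation*}
and separating the summand containing $dr$ from the purely horizontal one forces $\Phi-d\eta=0$ on $M$, i.e., the contact metric condition $d\eta=\Phi$.

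The main obstacle is not conceptual but careful bookkeeping: handling the factor of $\tfrac{1}{2}$ that the paper builds into $d$ via \eqref{varphi1} and the associated wedge convention, and verifying that the mixed $\partial_{r}$-contributions to $F$ and to $d(r^{2}\eta)$ produce exactly the same cross terms $\pm r\lambda\,\eta(V),\,\pm r\mu\,\eta(U)$, so that their difference collapses to the purely horizontal $r^{2}(\Phi-d\eta)$.
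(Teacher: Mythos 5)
Your identity $F - d(r^{2}\eta) = r^{2}(\Phi - d\eta)$ is correct in the paper's conventions (the factor $\tfrac{1}{2}$ built into $d$ by (\ref{varphi1}) is matched by the corresponding wedge convention, and the cross terms $\pm r\lambda\,\eta(V)$, $\pm r\mu\,\eta(U)$ do cancel in the difference), and your forward direction together with the \emph{moreover} clause is exactly the paper's computation $F = r^{2}\Phi + 2r\,dr\wedge\eta = d(r^{2}\eta)$. For the converse your mechanism is genuinely different and more elementary: where the paper computes $\overline{\nabla}J$ on the cone and extracts $\Phi - d\eta$ from the cyclic-sum expression for $dF$, you read it off from the $dr$-degree decomposition of $dF = d\bigl[r^{2}(\Phi - d\eta)\bigr]$. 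Granting the identity, that step is sound.

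The genuine gap is at the start of the converse. In Proposition \ref{pcontact} the hypothesis is only that $(M,g)$ is a Riemannian manifold whose metric cone admits \emph{some} compatible almost complex structure $J$ with closed fundamental form; there is no $\eta$, $\varphi$ or $\xi$ on $M$ to substitute into your identity, and a priori $J$ need not arise from (\ref{JJ}). As written, your argument proves only the weaker statement: if an almost contact metric structure on $M$ is given and its associated cone structure (\ref{JJ}) is almost K\"ahler, then that structure is contact metric. The paper supplies the missing construction: it sets $\xi := -J\frac{\partial}{\partial r}$, $\eta := g(\xi,\cdot)$ and defines $\varphi$ by $J(U,0) = (\varphi U,\eta(U)\frac{\partial}{\partial r})$ along $M\times\{1\}$, checks that this is an almost contact metric structure compatible with $g$, and observes that $J$ then agrees with (\ref{JJ}) along the slice; only then does it extract $\Phi = d\eta$ from $dF = 0$. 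You should add this step. (Note also that $dF$ at points of $M\times\{1\}$ sees the normal derivative of $J$, so one must either take $J$ to be the cone extension (\ref{JJ}) of the induced structure --- as the paper's formulas for $\overline{\nabla}J$ implicitly do --- or explain why agreement along the slice suffices; your identity alone does not address this.)
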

\begin{proof} From (\ref{JJ}), the corresponding fundamental $2$-forms $F$ and $\Phi$ of $({\mathcal C}(M),J,\overline{g})$ and of the contact metric manifold $(M,\xi,g)$ satisfy $ F = r^{2}\Phi + 2rdr\wedge \eta = d(r^{2}\eta),$ where tensors of $M$ (resp., ${\mathbb R}^{+})$ and their pull-backs on ${\mathcal C}(M)$ via the projection $\pi_{1}\colon {\mathcal C}(M)\to M$ (resp., $\pi_{2}\colon {\mathcal C}(M)\to {\mathbb R}^{+}),$ are denote with the same letter. Then $({\mathcal C}(M),\overline{g})$ is almost K\"ahler. 

Conversely, let $J$ be an almost complex structure on ${\mathcal C}(M)$ compatible with $\overline{g}$ such that $dF = 0,$ $F$ being the K\"ahler form of $({\mathcal C}(M),J,\overline{g}).$ Identifying $M$ with $M\times \{1\}\subset {\mathcal C}(M),$ we consider the induced almost contact metric structure $(\varphi,\xi,\eta,g)$ on $M$ as hypersurface of ${\mathcal C}(M),$ where
\[
\xi = -J\frac{\partial}{\partial r},\quad \eta = g(\xi,\cdot),\quad J(U,0) = (\varphi U,\eta(U)\frac{\partial}{\partial r}).
\]
Then $J$ on $M\cong M\times\{1\}$ coincides with the almost complex structure defined in (\ref{JJ}). Moreover, for the Levi-Civita connection $\overline{\nabla}$ of $({\mathcal C}(M),\overline{g})$ we have
$$
\begin{array}{lcl}
(\overline{\nabla}_{(0,\frac{\partial}{\partial r})}J)(0,\frac{\partial}{\partial r}) & = & (\overline{\nabla}_{(0,\frac{\partial}{\partial r})}J)(U,0) = (0,0),\\[0.4pc]
(\overline{\nabla}_{(U,0)}J))(0,\frac{\partial}{\partial r}) & =  &-\frac{1}{r}(\nabla_{U}\xi + \varphi U,0),\\[0.4pc]
(\overline{\nabla}_{(U,0)}J)(V,0)  & =  &((\nabla_{U}\varphi)V + \eta(V)U - g(U,V)\xi,r g(\nabla_{U}\xi + \varphi U,V)\frac{\partial}{\partial r}).
\end{array}
$$
Now, using (\ref{nablaPhi}) and (\ref{varphi1}), we get at the points $(x,1)\in M\times\{1\},$ 
$$
\begin{array}{lcl}
(\iota_{(0,\frac{\partial}{\partial r})}dF)((U,0),(V,0)) & = & \frac{1}{3}\cyclic(\overline{\nabla}_{(0,\frac{\partial}{\partial r})})((U,0),(V,0)) = \frac{1}{3}\cyclic\overline{g}((U,0),(\overline{\nabla}_{(0,\frac{\partial}{\partial r})}J)(V,0))\\[0.4pc]
& =  &\frac{1}{3}(\overline{g}((V,0),-(\nabla_{U}\xi + \varphi U,0)) + g(\nabla_{V}\xi + \varphi V,U))\\[0.4pc]
& = & \frac{2}{3}(\Phi(U,V) - d\eta(U,V)).
\end{array}
$$
This proves the result.
\end{proof}

Under the identification $M\cong M\times \{1\}\subset{\mathcal C}(M),$ we have also proven the following.
\begin{corollary}\label{cclasses} Each contact metric structure $(\xi,g)$ on an odd-dimensional manifold $M$ can be extended to a unique almost K\"ahler structure on its metric cone $({\mathcal C}(M),\overline{g}).$
\end{corollary}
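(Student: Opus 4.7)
The corollary is essentially a repackaging of Proposition \ref{pcontact}, so the plan is to carry out existence and uniqueness using the two directions already established. First I would construct the almost K\"ahler extension explicitly: pull the tensors $\varphi, \xi, \eta$ associated to the given contact metric $(\xi,g)$ back to $\mathcal{C}(M) = M \times \mathbb{R}^+$ via $\pi_1$, and define $J$ on $\mathcal{C}(M)$ by formula (\ref{JJ}). The forward direction of Proposition \ref{pcontact} then immediately gives that $(\mathcal{C}(M), J, \overline{g})$ is almost K\"ahler, with K\"ahler form $F = d(r^2\eta)$.

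Next I would check that this is genuinely an extension under the identification $M \cong M \times \{1\}$: evaluating (\ref{JJ}) at $r = 1$ yields $-J(\partial/\partial r) = \xi$ and $J(U, 0) = (\varphi U, \eta(U)\,\partial/\partial r)$, which are precisely the formulas of the hypersurface construction in the converse part of Proposition \ref{pcontact} that recover the almost contact metric data $(\varphi, \xi, \eta, g)$ on $M$.

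For uniqueness, suppose $\tilde{J}$ is another almost K\"ahler structure on $(\mathcal{C}(M), \overline{g})$ whose hypersurface-induced structure on $M \times \{1\}$ is $(\xi, g)$. Applying the converse direction of Proposition \ref{pcontact} to $\tilde{J}$ forces $\tilde{J}(\partial/\partial r) = -\xi$ and $\tilde{J}(U, 0) = (\varphi U, \eta(U)\,\partial/\partial r)$ at points of $M \times \{1\}$, so $\tilde{J}|_{M \times \{1\}} = J|_{M\times\{1\}}$. To propagate the equality to all $(x, r) \in \mathcal{C}(M)$ I would invoke the cone homothety $H_t(x, s) = (x, ts)$, which is a conformal symmetry of $\overline{g}$ (namely $H_t^*\overline{g} = t^2\overline{g}$) under which $J$ from (\ref{JJ}) is invariant by direct check; the main obstacle is showing that $\tilde{J}$ is automatically $H_t$-invariant as a consequence of the $\overline{g}$-compatibility and the extension hypothesis, after which $\tilde{J}$ is determined globally by its restriction to the slice $M \times \{1\}$, giving $\tilde{J} = J$.
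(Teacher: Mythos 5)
Your existence argument is exactly the paper's: build $J$ on $\mathcal{C}(M)$ from the pullbacks of $(\varphi,\xi,\eta)$ via (\ref{JJ}), invoke the forward direction of Proposition \ref{pcontact} to get $F=d(r^{2}\eta)$ closed, and check at $r=1$ that the hypersurface-induced structure is the given one. That part is fine, and your observation that any competitor $\tilde{J}$ must agree with $J$ along $M\times\{1\}$ is also correct: the induced $\tilde{\varphi}$ satisfies $g(\cdot,\tilde{\varphi}\cdot)=d\eta=g(\cdot,\varphi\cdot)$ by the converse computation in Proposition \ref{pcontact}, so $\tilde{\varphi}=\varphi$ and hence $\tilde{J}=J$ on the slice.

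The gap is in the propagation off the slice. You reduce uniqueness to the claim that any almost K\"ahler $\tilde{J}$ on $(\mathcal{C}(M),\overline{g})$ inducing $(\xi,g)$ on $M\times\{1\}$ is invariant under the homotheties $H_{t}$, and you explicitly flag this as ``the main obstacle'' without proving it. It does not follow from $\overline{g}$-compatibility together with the extension hypothesis: at each point the set of $\overline{g}$-orthogonal almost complex structures is a positive-dimensional homogeneous space, and the closedness of $\tilde{F}$ is an underdetermining first-order condition, so nothing in your hypotheses forces $\tilde{J}$ to be constant along the rays $\{x\}\times\mathbb{R}^{+}$ once it is prescribed on a single slice (already on $\mathcal{C}(\mathbb{S}^{3})=\mathbb{R}^{4}\setminus\{0\}$ with the flat metric there are non-constant orthogonal complex structures with closed K\"ahler form). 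So as written the uniqueness half is not established. The uniqueness the paper actually proves is weaker and does not need this step: for a contact metric structure the tensor $\varphi$ is uniquely determined by $(\xi,g)$ through $\Phi=d\eta$ and the compatibility conditions, and formula (\ref{JJ}) then determines $J$ at every point of $\mathcal{C}(M)$, so the extension of the prescribed cone form is unique by construction. Either prove your homothety-invariance claim (which would require a genuinely new argument and may need extra hypotheses), or weaken the uniqueness statement to uniqueness of the canonical extension (\ref{JJ}), as the paper does.
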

As a direct consequence of the relationship between Ricci curvature of $M$ and its metric cone $({\mathcal C}(M),\overline{g}),$ it follows that $g$ is Einstein if and only if $\overline{g}$ is Ricci-flat. Then $({\mathcal C}(M),\overline{g})$ is K\"ahler Ricci-flat (Calabi-Yau) and the restricted holonomy group reduces to a subgroup of ${\rm SU}(n + 1),$ where $\dim M = 2n+1.$ Moreover, an $m$-dimensional Riemannian manifold $(M,g)$ is $3$-Sasakian if and only if the holonomy group of the metric cone on $M$ reduces to a subgroup of ${\rm Sp}(\frac{m+1}{4}).$ In particular, $m = 4n+3,$ $n\geq 1,$ and $({\mathcal C}(M),\overline{g})$ is {\em hyperK\"ahler}. This implies that $3$-Sasakian manifolds are Sasakian-Einstein with Einstein constant $\lambda = 2(2n+1).$ 

 \section{Homogeneous almost contact metric manifolds}

\setcounter{equation}{0}

A connected homogeneous manifold $M$ can be expressed as a quotient manifold $G/K,$ where $G$ is a connected Lie group acting transitively on $M$ and $K$ is the isotropy subgroup of $G$ at some point $o\in M,$ the {\em origin} of $G/K.$ Moreover, if $g$ is a $G$-invariant Riemannian metric on $M = G/K,$ then $(M,g)$ is said to be a {\em homogeneous Riemannian manifold} and, if $(\varphi,\xi,\eta)$ is an $G$-invariant almost contact structure on $M$ compatible with $g,$ $(M,\varphi,\xi,\eta,g)$ is said to be a {\em homogeneous almost contact metric manifold}.

We can assume that $G/K$ is a {\em reductive} homogeneous space, i.e. there exists an ${\rm Ad}(K)$-invariant subspace ${\mathfrak m}$ of the Lie algebra ${\mathfrak g}$ of $G$ such that ${\mathfrak g} = {\mathfrak k}\oplus{\mathfrak m},$ ${\mathfrak k}$ being the Lie algebra of $K.$ Then the differential map $(\pi_{K})_{*e}$ at the identity element $e\in G$ of the projection $\pi_{K}\colon G\to G/K,$ $\pi_{K}(a) = aK,$ gives an isomorphism of ${\mathfrak m}$ onto $T_{o}(G/K)$ and $g$ is determined by an ${\rm Ad}(K)$-invariant inner product $\langle\cdot,\cdot\rangle$ on ${\mathfrak m}.$ 

It is clear that there exists a sufficiently small neighborhood $O_{\mathfrak m}\subset {\mathfrak m}$ of zero in ${\mathfrak m}$ such that $\exp(O_{\mathfrak m})$ is a submanifold of $G$ and the mapping $\pi_{K\mid\exp(O_{\mathfrak m})}$ is a diffeomorphism onto a neighborhood ${\mathcal U}_{o}$ of $o.$ Hence, for each $\mu\in {\mathfrak m},$ a vector field $\mu^{\tau}$ on ${\mathcal U}_{o}$ is defined as
\begin{equation}\label{tautau}
\mu^{\tau}_{(\exp x) K} = (\tau_{\exp x})_{*o}\mu,\quad \mbox{\rm for all $x\in O_{\mathfrak m},$}
\end{equation}
where $\tau_{b},$ for each $b\in G,$ is the translation $\tau_{b}\colon G/K\to G/K,$ $\tau_{b}(aK) = baK.$ Then $[\mu_{1}^{\tau},\mu^{\tau}_{2}]_{o} = [\mu_{1},\mu_{2}]_{\mathfrak m}$ (see \cite{N}), where $[\cdot,\cdot]_{\mathfrak m}$ denotes the ${\mathfrak m}$-component of $[\cdot,\cdot].$ 

There exists a one-to-one correspondence between the set of all invariant affine connections $\nabla$ on $G/K$ and the set of all ${\rm Ad}(K)$-invariant bilinear functions $\alpha:{\mathfrak m}\times{\mathfrak m}\to {\mathfrak m}$ \cite[Theorem 8.1]{N}. The correspondence is given by 
\begin{equation}\label{alpha}
\alpha(\mu_{1},\mu_{2}) = \nabla_{\mu_{1}}\mu_{2}^{\tau},\quad \mu_{1},\mu_{2}\in {\mathfrak m}.
\end{equation}
Then the curvature tensor $R$ of $\nabla$ at the origin $o,$ where $R$ is defined by the sign convention $R(U,V) = \nabla_{[U,V]}-[\nabla_{U},\nabla_{V}]$ for all $U,V\in {\mathfrak X}(M),$ is expressed in terms of $\alpha$ by
\begin{equation}\label{R}
\begin{array}{lcl}
R_{o}(\mu_{1},\mu_{2})\mu_{3} & = &  [[\mu_{1},\mu_{2}]_{\mathfrak k},\mu_{3}] + \alpha([\mu_{1},\mu_{2}]_{\mathfrak m},\mu_{3})\\[0.4pc]
& &  + \alpha(\mu_{2},\alpha(\mu_{1},\mu_{3})) -  \alpha(\mu_{1},\alpha(\mu_{2},\mu_{3})),
\end{array}
\end{equation}
for all $\mu_{1},\mu_{2},\mu_{3}\in {\mathfrak m}.$ Using the Koszul formula in (\ref{alpha}), the Levi Civita connection of $(M,g)$ is determined for $\alpha$ given by
\begin{equation}\label{nabla}
\alpha(\mu_{1},\mu_{2}) = \frac{1}{2}[\mu_{1},\mu_{2}]_{\mathfrak m} + {\mathfrak U}(\mu_{1},\mu_{2}),
\end{equation} 
 where ${\mathfrak U}$ is the symmetric bilinear function on ${\mathfrak m}\times{\mathfrak m}$ such that
\begin{equation}\label{U}
2\langle{\mathfrak U}(\mu_{1},\mu_{2}),\mu_{3}\rangle = \langle[\mu_{3},\mu_{1}]_{\mathfrak m},\mu_{2}\rangle + \langle[\mu_{3},\mu_{2}]_{\mathfrak m},\mu_{1}\rangle.
\end{equation}
From (\ref{alpha}), a $G$-invariant vector field $U$ on $G/K$ is Killing if and only if $\langle\alpha(\mu_{1},\mu),\mu_{2}\rangle + \langle\alpha({\mu_{2}},\mu),\mu_{1}\rangle = 0,$ for all $\mu_{1},\mu_{2}\in {\mathfrak m},$ where $\mu $ is the ${\rm Ad}(K)$-invariant vector $\mu = U_{o}\in {\mathfrak m}.$ Since from (\ref{nabla}) and (\ref{U}) we get $\langle\alpha(\mu_{1},\mu),\mu_{2}\rangle + \langle\alpha({\mu_{2}},\mu),\mu_{1}\rangle = -\langle{\mathfrak U}(\mu_{1},\mu_{2}),\mu\rangle,$
 $U$ is a Killing vector field if and only if 
 \begin{equation}\label{Killing}
 {\mathfrak U}(\mu_{1},\mu_{2})\in \mu^{\bot},\quad \mu_{1},\mu_{2}\in {\mathfrak m}.
\end{equation}

The $G$-invariant connection $\widetilde{\nabla}$ corresponding with $\alpha = 0$ is known as the {\em canonical connection} \cite{KNI}. Its torsion $\widetilde{T}$ and curvature $\widetilde{R}$ are given at the origin by
\begin{equation}\label{TR}
\widetilde{T}_{o}(\mu_{1},\mu_{2})  =  -[\mu_{1},\mu_{2}]_{\mathfrak m},\quad \widetilde{R}_{o}(\mu_{1},\mu_{2})= {\rm ad}_{[\mu_{1},\mu_{2}]_{\mathfrak k}}.
\end{equation}
Since any $G$-invariant tensor field on $M$ is $\widetilde{\nabla}$-parallel \cite[Proposition I.11]{Kow}, it follows that $\widetilde{\nabla}g = \widetilde{\nabla}\widetilde{T} = \widetilde{\nabla}\widetilde{R} = 0.$

Let ${\mathfrak g}_{0}\subset {\mathfrak k}$ be the subalgebra generated by all projections $[\mu_{1},\mu_{2}]_{\mathfrak k},$ $\mu_{1},\mu_{2}\in {\mathfrak m},$ and let $\widetilde{\mathfrak g}$ be the subalgebra $\widetilde{\mathfrak g} :={\mathfrak g}_{0}\oplus {\mathfrak m}$ of ${\mathfrak g},$ known as the {\em transvection algebra}. Denote by $G_{0}$ and $\widetilde{G}$ the connected subgroups of $G$ with Lie algebra ${\mathfrak g}_{0}$ and $\widetilde{\mathfrak g},$ respectively. We say that $\widetilde{G}$ is the {\em transvection group} of the {\em affinely manifold} $(M,\widetilde{\nabla})$ \cite[Ch. I]{Kow}. Then, because ${\mathfrak g}_{0}$ coincides precisely with the algebra generated by all curvature transformation $\widetilde{R}(\mu_{1},\mu_{2})$ on $T_{o}M\cong {\mathfrak m}$ \cite[Proposition I.2]{Kow}, $G_{o}$ is isomorphic to the {\em restricted holonomy group} of $(M,\widetilde{\nabla}).$ Therefore, if $G_{o}$ is assumed to be closed in $K,$ we obtain a new representation for $M$ as reductive space $M = \widetilde{G}/G_{o}$ with the same canonical connection.

We finish this section with the following result, which will be used later.
\begin{lemma}\label{lfirst} Let $(M_{1} = G_{1}/K_{1},\varphi_{1},\xi_{1},\eta_{1},g_{1})$ and $(M_{2} = G_{2}/K_{2},\varphi_{2},\xi_{2},\eta_{2},g_{2})$ be simply connected homogeneous almost contact metric manifolds with adapted reductive decompositions ${\mathfrak g}_{1} = {\mathfrak k}_{1}\oplus {\mathfrak m}_{1}$ and ${\mathfrak g}_{2} = {\mathfrak k}_{2} \oplus {\mathfrak m}_{2},$ respectively, and $o_{1}$ and $o_{2}$  the corresponding origin points. If $L\colon ({\mathfrak m}_{1},\langle\cdot,\cdot\rangle_{1})\to ({\mathfrak m}_{2},\langle\cdot,\cdot\rangle_{2})$ is a linear isometry satisfying
\begin{enumerate}
\item[{\rm (i)}] $L[\mu,\nu]_{{\mathfrak m}_{1}} = [L \mu,L \nu]_{{\mathfrak m}_{2}},$
\item[{\rm (ii)}] $L\circ {\rm ad}_{[\mu,\nu]_{{\mathfrak k}_{1}}} = {\rm ad}_{[L \mu,L \nu]_{{\mathfrak k}_{2}}}\circ L,$
\item[{\rm (iii)}] $L{\xi_{1}}_{o_{1}}= {\xi_{2}}_{o_{2}}$ and $L\circ\varphi_{1\,o_{1}} = \varphi_{2\,o_{2}}\circ L,$
\end{enumerate}
for all $\mu,\nu\in {\mathfrak m}_{1},$ then there exits a unique isometry $f$ of $(M_{1},g_{1})$ onto $(M_{2},g_{2})$ such that $f(o_{1}) = o_{2}$ and the differential of $f$ at $o_{1}$ coincides with $L.$ Moreover, the almost contact metric structures on $M_{1}$ and $M_{2}$ are isomorphic via $f.$
\end{lemma}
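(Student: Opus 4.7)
The plan is to deduce the lemma from the classical extension theorem for affine connections with parallel torsion and curvature, applied to the canonical connections $\widetilde\nabla_i$ on the two reductive presentations $G_i/K_i$.

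First I would reformulate the hypotheses in terms of $\widetilde\nabla_i$. By (\ref{TR}) the torsion and curvature of $\widetilde\nabla_i$ at $o_i$ are $\widetilde T_{o_i}(\mu,\nu) = -[\mu,\nu]_{\mathfrak{m}_i}$ and $\widetilde R_{o_i}(\mu,\nu) = \mathrm{ad}_{[\mu,\nu]_{\mathfrak{k}_i}}$, so conditions (i) and (ii) say precisely that $L$ intertwines the torsion tensors and the curvature tensors at the chosen origins. Because every $G_i$-invariant tensor field is $\widetilde\nabla_i$-parallel, both canonical connections have parallel torsion and parallel curvature and admit $g_i$, $\varphi_i$, $\xi_i$, $\eta_i$ as parallel tensor fields; moreover they are complete, their geodesics through $o_i$ being the homogeneous curves $t\mapsto \pi_{K_i}(\exp t\mu)$ for $\mu\in\mathfrak{m}_i$.

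Second, I would invoke the Cartan--Ambrose--Hicks extension theorem (see, e.g., \cite[Ch.~VI]{KNI} or \cite[Ch.~I]{Kow}): under parallelism of torsion and curvature, any linear isomorphism between tangent spaces of two complete, simply connected, affinely connected manifolds that intertwines torsion and curvature extends uniquely to a global affine diffeomorphism. This produces a unique map $f\colon M_1 \to M_2$ with $f(o_1)=o_2$ and $f_{*o_1}=L$, which is automatically affine for the pair $(\widetilde\nabla_1,\widetilde\nabla_2)$; uniqueness is immediate from the fact that an affine map is determined by its $1$-jet at a single point.

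Finally, I would upgrade $f$ to an isomorphism of the entire almost contact metric structure. For each of the $G_2$-invariant fields $g_2,\varphi_2,\xi_2,\eta_2$, the pull-back under $f$ is a $\widetilde\nabla_1$-parallel tensor field on $M_1$, because $f$ is affine for the canonical connections and the targets are $\widetilde\nabla_2$-parallel. By hypothesis (iii) and the fact that $L$ is a linear isometry, these pull-backs coincide with $g_1,\varphi_1,\xi_1,\eta_1$ at the single point $o_1$; parallelism then propagates the equalities throughout $M_1$, yielding $f^{*}g_2=g_1$, $f_{*}\xi_1=\xi_2$ and $f_{*}\circ\varphi_1 = \varphi_2\circ f_{*}$. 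The one delicate point is the application of the extension theorem, but its hypotheses (completeness and simple connectedness) are either given or standard for reductive quotients; the remainder of the argument is purely tensorial bookkeeping.
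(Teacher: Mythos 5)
Your proposal is correct and follows essentially the same route as the paper: conditions (i)--(ii) are read off as saying $L$ intertwines the torsion and curvature of the canonical connections via (\ref{TR}), the affine extension theorem of \cite[Theorem 7.8]{KNI} (Cartan--Ambrose--Hicks) produces the unique affine isometry $f$, and the structure tensors are matched using $\widetilde{\nabla}$-parallelism together with hypothesis (iii). The only cosmetic difference is in the last step, where you propagate the pointwise equality at $o_{1}$ directly by parallel transport of the pulled-back tensors, whereas the paper pushes $\xi_{1},\varphi_{1}$ forward and invokes $\widetilde{G}_{2}$-invariance of $\widetilde{\nabla}_{2}$-parallel fields; both arguments are valid.
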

\begin{proof} Let $\widetilde{T}_{i}$ be the torsion and let $\widetilde{R}_{i}$ be the curvature of the canonical connection $\widetilde{\nabla}_{i}$ on $M_{i},$ $i = 1,2.$ From (i) and (ii), using (\ref{TR}), $L$ maps $\widetilde{T}_{1}$ and $\widetilde{R}_{1}$ at $o_{1}$ into $\widetilde{T}_{2}$ and $\widetilde{R}_{2}$ at $o_{2},$ respectively. Since $\widetilde{\nabla}_{i}\widetilde{T}_{i} = \widetilde{\nabla}_{i}\widetilde{R}_{i} = 0,$ it follows from \cite[Theorem 7.8]{KNI}, that there exists a unique affine isomorphism $f$ of $M_{1}$ onto $M_{2}$ such that $f(o_{1}) = o_{2}$ and $f_{*o_{1}} = L.$ Moreover, taking into account that $\widetilde{\nabla}_{1}$ and $\widetilde{\nabla}_{2}$ are metric connections and $L$ is a linear isometry, $f$ is an isometry.

Next, we show the last part of the lemma. Because $\xi_{1}$ and $\varphi_{1}$ are $G_{1}$-invariant, they are $\widetilde{\nabla}_{1}$-parallel. Hence, using that $f$ is an affine isomorphism, $f_{*}\xi_{1}$ and $f_{*}\circ\varphi_{1}\circ f_{*}^{-1}$ are $\widetilde{\nabla}_{2}$-parallel tensor fields on $M_{2}.$ According to \cite[Proposition I.38]{Kow}, they are then $\widetilde{G}_{2}$-invariant, where $\widetilde{G}_{2}$ denotes the transvection group of $(M_{2}=G_{2}/K_{2},\widetilde{\nabla}_{2}).$ Since $\xi_{2}$ and $\varphi_{2}$ are also $\widetilde{G}_{2}$-invariant and, from (iii), $f_{*\,o_{1}}\xi_{1\,o_{1}} = \xi_{2\,o_{2}}$ and $(f_{*}\circ \varphi_{1}\circ f_{*}^{-1})_{*\,o_{2}} = \varphi_{2\,o_{2}},$ it follows, using the transitivity of $\widetilde{G}_{2}$ on $M_{2}$ as subgroup of $G_{2},$ that $f_{*}\xi_{1} = \xi_{2}$ and $f_{*}\circ\varphi_{1}\circ f_{*}^{-1} = \varphi_{2}.$ This proves the result.
\end{proof}
\begin{remark}{\rm The quaterna $({\mathfrak m}_{i},\widetilde{T}_{i},\widetilde{R}_{i},\langle\cdot,\cdot\rangle_{i} = g_{i\,o_{i}}),$ for each $i = 1,2,$ is an} infinitesimal model {\rm associated to the reductive homogeneous manifold $(M_{i}= G_{i}/K_{i},g_{i}),$ known as the} canonical infinitesimal model, {\rm and the linear isometry $L$ satisfying (i) and (ii) is said to be an} isomorphism {\rm of infinitesimal models (see \cite{GGV} and \cite{TV}). Then Lemma \ref{lfirst} says that two simply connected homogeneous almost contact metric manifolds are isomorphic if there exists an isomorphism between the corresponding canonical infinitesimal models that preserves the almost contact metric structures.}
\end{remark}

\section{The tangent sphere bundle of a compact rank-one symmetric space} 

\setcounter{equation}{0}

 Let $G/K$ be a compact rank-one symmetric space and let ${\mathfrak g} = {\mathfrak k}\oplus{\mathfrak m}$ be the associated Cartan decomposition. Then, because $G$ is compact and semisimple, $G/K$ is equipped with an $G$-invariant metric determined by the restriction to ${\mathfrak m}$ of an ${\rm Ad}(G)$-invariant inner product $\langle\cdot,\cdot\rangle$ of ${\mathfrak g}.$ 
 
 Fixed an (one-dimensional) Cartan subspace ${\mathfrak a}$ of ${\mathfrak m},$ it follows from \cite[Lemma 3.2]{JC} that the subset $\Sigma^{+}\subset\Sigma$ of positive restricted roots of the triple $({\mathfrak g},{\mathfrak k},{\mathfrak a})$ is either $\{\varepsilon\},$ if $G/K\in \{{\mathbb S}^{n},{\mathbb R}{\bf P}^{n}\;(n\geq 2)\},$ or $\{\varepsilon,\frac{1}{2}\varepsilon\}$ for the rest of compact rank-one symmetric spaces, $\varepsilon$ being a linear function $\varepsilon\in ({\mathfrak a}^{\mathbb C})^{*}.$ (For the general theory of restricted roots, see \cite[Ch. III and Ch. VII]{He} and \cite[Section V.2]{Loos}.) For each $\lambda\in \Sigma^{+},$ put
  \begin{equation}\label{mk}
 {\mathfrak m}_{\lambda}  =  \{\mu\in {\mathfrak m}\colon {\rm ad}^{2}_{X}\mu = \lambda^{2}(X)\mu \},\quad 
 {\mathfrak k}_{\lambda}  =  \{\nu\in {\mathfrak k}\colon {\rm ad}^{2}_{X}\nu = \lambda^{2}(X)\nu\},
 \end{equation}
 $X$ being a generator of ${\mathfrak a}.$ Then, using \cite[Ch. VII, Lemma 11.3]{He}, we have the direct and $\langle\cdot,\cdot\rangle$-orthogonal decompositions 
 \[
 {\mathfrak m} = {\mathfrak a}\oplus{\mathfrak m}_{\varepsilon}\oplus{\mathfrak m}_{\varepsilon/2}, \quad {\mathfrak k}= {\mathfrak h}\oplus {\mathfrak k}_{\varepsilon}\oplus{\mathfrak k}_{\varepsilon/2},
 \]
 where ${\mathfrak h}$ is the centralizer of ${\mathfrak a}$ in ${\mathfrak k},$ that is the Lie algebra ${\mathfrak h} = \{\nu\in {\mathfrak k}\colon [{\mathfrak a},\nu] = 0\}$ of the subgroup $H$ of $K.$ Moreover, $\dim {\mathfrak m}_{\varepsilon} = \dim {\mathfrak k}_{\varepsilon} = m_{\varepsilon}$ and $\dim{\mathfrak m}_{\varepsilon/2} = \dim{\mathfrak k}_{\varepsilon/2}= m_{\varepsilon/2},$ where $m_{\varepsilon}$ and $m_{\varepsilon/2}$ are the multiplicities of the corresponding restricted roots $\varepsilon$ and $\varepsilon/2$ (see Table I). Here and in the sequels, we put ${\mathfrak m}_{\varepsilon/2} = {\mathfrak k}_{\varepsilon/2} = 0$ if $\varepsilon/2$ is not in $\Sigma.$  
 
 Applying \cite[Ch. VII, Lemma 11.4]{He}, it is immediate the following.
\begin{lemma}\label{lbrack1} We have:
\begin{equation}\label{brack1}
\begin{array}{l}
[{\mathfrak h},{\mathfrak m}_{\lambda}]\subset {\mathfrak m}_{\lambda},\quad [{\mathfrak h},{\mathfrak k}_{\lambda}]\subset{\mathfrak k}_{\lambda},\quad [{\mathfrak a},{\mathfrak m}_{\lambda}]\subset{\mathfrak k}_{\lambda},\quad [{\mathfrak a},{\mathfrak k}_{\lambda}]\subset {\mathfrak m}_{\lambda},\;\; \lambda = \varepsilon,\varepsilon/2,\\[0.4pc]
[{\mathfrak m}_{\varepsilon},{\mathfrak m}_{\varepsilon}]\subset {\mathfrak h},\quad [{\mathfrak m}_{\varepsilon},{\mathfrak m}_{\varepsilon/2}]\subset {\mathfrak k}_{\varepsilon/2},\quad [{\mathfrak m}_{\varepsilon},{\mathfrak k}_{\varepsilon}]\subset {\mathfrak a},\quad [{\mathfrak m}_{\varepsilon},{\mathfrak k}_{\varepsilon/2}]\subset {\mathfrak m}_{\varepsilon/2},\\[0.4pc]
[{\mathfrak m}_{\varepsilon/2},{\mathfrak m}_{\varepsilon/2}]\subset {\mathfrak h}\oplus {\mathfrak k}_{\varepsilon},\quad [{\mathfrak m}_{\varepsilon/2},{\mathfrak k}_{\varepsilon}]\subset{\mathfrak m}_{\varepsilon/2},\quad [{\mathfrak m}_{\varepsilon/2},{\mathfrak k}_{\varepsilon/2}]\subset {\mathfrak a}\oplus {\mathfrak m}_{\varepsilon},\\[0.4pc]
[{\mathfrak k}_{\varepsilon},{\mathfrak k}_{\varepsilon}]\subset {\mathfrak h},\quad [{\mathfrak k}_{\varepsilon},{\mathfrak k}_{\varepsilon/2}]\subset {\mathfrak k}_{\varepsilon/2},\quad [{\mathfrak k}_{\varepsilon/2},{\mathfrak k}_{\varepsilon/2}]\subset {\mathfrak h}\oplus {\mathfrak k}_{\varepsilon}.
\end{array}
\end{equation}
\end{lemma}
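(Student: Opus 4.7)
The plan is to pass to the complexification and use the root space decomposition
\[
{\mathfrak g}^{\mathbb C} = {\mathfrak g}^{\mathbb C}_0 \oplus \bigoplus_{\alpha \in \Sigma} {\mathfrak g}^{\mathbb C}_\alpha,\qquad {\mathfrak g}^{\mathbb C}_\alpha = \{Y \in {\mathfrak g}^{\mathbb C} \colon [X,Y] = \alpha(X) Y,\; \forall X \in {\mathfrak a}^{\mathbb C}\},
\]
for which $[{\mathfrak g}^{\mathbb C}_\alpha,{\mathfrak g}^{\mathbb C}_\beta] \subset {\mathfrak g}^{\mathbb C}_{\alpha+\beta}$. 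The factorization of $\lambda^2(X)\,{\rm I}-{\rm ad}_X^2 = (\lambda(X)\,{\rm I} - {\rm ad}_X)(\lambda(X)\,{\rm I} + {\rm ad}_X)$ shows that each ${\mathfrak m}_\lambda$ (resp.\ ${\mathfrak k}_\lambda$) defined by (\ref{mk}) is exactly the intersection of ${\mathfrak m}$ (resp.\ ${\mathfrak k}$) with $({\mathfrak g}^{\mathbb C}_\lambda \oplus {\mathfrak g}^{\mathbb C}_{-\lambda})$. Likewise, ${\mathfrak h}^{\mathbb C} = {\mathfrak k}^{\mathbb C} \cap {\mathfrak g}^{\mathbb C}_0$ and ${\mathfrak a}^{\mathbb C} = {\mathfrak m}^{\mathbb C} \cap {\mathfrak g}^{\mathbb C}_0$, since ${\mathfrak a}$ is maximal abelian in ${\mathfrak m}$.

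Next I would dispatch the first line of (\ref{brack1}) by a direct Jacobi calculation rather than going through roots. For $\nu \in {\mathfrak h}$ and $\mu \in {\mathfrak m}_\lambda$, the relation $[X,\nu]=0$ gives
\[
{\rm ad}_X^2[\nu,\mu] = [\nu,{\rm ad}_X^2 \mu] = \lambda(X)^2 [\nu,\mu],
\]
so combined with $[{\mathfrak k},{\mathfrak m}]\subset {\mathfrak m}$ we obtain $[{\mathfrak h},{\mathfrak m}_\lambda]\subset {\mathfrak m}_\lambda$; the analogous argument works with ${\mathfrak k}_\lambda$. For $X'\in {\mathfrak a}$ and $\mu\in{\mathfrak m}_\lambda$, using $[X,X']=0$ twice yields ${\rm ad}_X^2[X',\mu] = [X',{\rm ad}_X^2\mu] = \lambda(X)^2[X',\mu]$, which, together with $[{\mathfrak m},{\mathfrak m}]\subset {\mathfrak k}$, gives $[{\mathfrak a},{\mathfrak m}_\lambda]\subset {\mathfrak k}_\lambda$. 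The same works for $[{\mathfrak a},{\mathfrak k}_\lambda]\subset {\mathfrak m}_\lambda$.

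The remaining relations are now a bookkeeping exercise in the root algebra. For the rank-one set $\Sigma \subset \{\pm\varepsilon,\pm\varepsilon/2\}$, the potential sums $\pm 2\varepsilon$ and $\pm 3\varepsilon/2$ do not lie in $\Sigma$, so the corresponding complex root spaces vanish. For instance $[{\mathfrak m}_\varepsilon,{\mathfrak m}_\varepsilon]\subset {\mathfrak k}\cap({\mathfrak g}^{\mathbb C}_{2\varepsilon}+{\mathfrak g}^{\mathbb C}_0+{\mathfrak g}^{\mathbb C}_{-2\varepsilon}) = {\mathfrak k}\cap{\mathfrak g}^{\mathbb C}_0 = {\mathfrak h}$; $[{\mathfrak m}_\varepsilon,{\mathfrak m}_{\varepsilon/2}]\subset {\mathfrak k}\cap({\mathfrak g}^{\mathbb C}_{\pm 3\varepsilon/2}+{\mathfrak g}^{\mathbb C}_{\pm\varepsilon/2}) = {\mathfrak k}_{\varepsilon/2}$; $[{\mathfrak m}_\varepsilon,{\mathfrak k}_\varepsilon]\subset {\mathfrak m}\cap{\mathfrak g}^{\mathbb C}_0 = {\mathfrak a}$; $[{\mathfrak m}_{\varepsilon/2},{\mathfrak m}_{\varepsilon/2}]\subset {\mathfrak k}\cap({\mathfrak g}^{\mathbb C}_{\pm\varepsilon}+{\mathfrak g}^{\mathbb C}_0) = {\mathfrak h}\oplus {\mathfrak k}_\varepsilon$, and so on for the nine remaining inclusions. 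Each follows at once by reading off degrees and intersecting with the appropriate Cartan-summand ${\mathfrak k}$ or ${\mathfrak m}$.

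The only mild obstacle is keeping track of the identification between the eigenspace definitions in (\ref{mk}) and the root-space description, but once this is set up in the first paragraph, every inclusion in (\ref{brack1}) becomes a one-line check. No deeper input is required, which is precisely why the author cites Helgason \cite[Ch.~VII, Lemma 11.4]{He} and asserts the result is immediate.
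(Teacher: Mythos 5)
Your proposal is correct, and it is essentially the paper's argument with the black box opened: the paper simply cites Helgason's Ch.~VII, Lemma~11.4, whose content and proof are exactly the restricted-root grading $[{\mathfrak g}^{\mathbb C}_\alpha,{\mathfrak g}^{\mathbb C}_\beta]\subset{\mathfrak g}^{\mathbb C}_{\alpha+\beta}$ together with the identifications ${\mathfrak m}_\lambda={\mathfrak m}\cap({\mathfrak g}^{\mathbb C}_\lambda\oplus{\mathfrak g}^{\mathbb C}_{-\lambda})$, ${\mathfrak h}^{\mathbb C}={\mathfrak k}^{\mathbb C}\cap{\mathfrak g}^{\mathbb C}_0$, ${\mathfrak a}^{\mathbb C}={\mathfrak m}^{\mathbb C}\cap{\mathfrak g}^{\mathbb C}_0$ and the observation that $2\varepsilon$ and $3\varepsilon/2$ are not restricted roots. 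The direct Jacobi-identity treatment of the first line is a harmless shortcut and all the inclusions check out.
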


From \cite[Ch. VII, Lemma 2.3]{He}, for any $\mu_{\lambda}\in {\mathfrak m}_{\lambda},$ $\lambda\in\Sigma^+$, there exists a unique vector $\nu_{\lambda}\in{\mathfrak k}_\lambda$ such that $[u,\mu_{\lambda}] = \sqrt{-1}\lambda(u)\nu_{\lambda}$ and $[u,\nu_{\lambda}] = -\sqrt{-1}\lambda(u)\mu_{\lambda},$ for all $u\in {\mathfrak a}.$ Then,
\begin{equation}\label{eq.ms4.3}
[u,\mu_\lambda]=  -\lambda_{\mathbb R}(u)\nu_\lambda,
\quad [u,\nu_\lambda]= \lambda_{\mathbb R}(u)\mu_\lambda,
\end{equation}
where $\lambda_{\mathbb R}$ is the linear function $\lambda_{\mathbb R} \colon {\mathfrak a}\to{\mathbb R}$,
$\lambda\in\Sigma^+$, defined by the relation $\sqrt{-1}\lambda_{\mathbb R}=\lambda$.  
 Then fixing in ${\mathfrak m}_{\varepsilon}$ and ${\mathfrak m}_{\varepsilon/2}$ some $\langle\cdot,\cdot\rangle$-orthonormal basis $\{\mu^{j}_{\varepsilon}\}$ and $\{\mu^{p}_{\varepsilon/2}\},$ $j = 1,\dots, m_{\varepsilon},$ $p = 1,\dots, m_{\varepsilon/2},$ we take the unique bases $\{\nu^{j}_{\varepsilon}\}$ and $\{\nu^{p}_{\varepsilon/2}\}$ of ${\mathfrak k}_{\varepsilon}$ and ${\mathfrak k}_{\varepsilon/2}$ satisfying (\ref{eq.ms4.3}) for $\lambda = \varepsilon$ and $\lambda = \varepsilon/2.$ Clearly these last two basis are also $\langle\cdot,\cdot\rangle$-orthonormal. 
 
 Consider $X$ the unique (basis) vector in ${\mathfrak a}$ such that $\varepsilon_{\mathbb R}(X) = 1.$ Then, multiplying the inner product $\langle\cdot,\cdot\rangle$ by a positive constant, we can assume that $\langle X,X\rangle = 1.$ 
 \begin{lemma}\label{pbrack} The following identities hold:
 $$
 \begin{array}{l}
 [\mu^{j}_{\varepsilon},\mu^{k}_{\varepsilon}] = [\nu^{j}_{\varepsilon},\nu^{k}_{\varepsilon}],\quad [\mu^{j}_{\varepsilon},\nu^{k}_{\varepsilon}] = -[\nu^{j}_{\varepsilon},\mu^{k}_{\varepsilon}] =-\delta_{jk}X,\\[0.4pc]
 [\mu^{j}_{\varepsilon},\mu^{p}_{\varepsilon/2}] = [\nu^{j}_{\varepsilon},\nu^{p}_{\varepsilon/2}],\quad [\nu^{j}_{\varepsilon},\mu^{p}_{\varepsilon/2}] = -[\mu^{j}_{\varepsilon},\nu^{p}_{\varepsilon/2}],\\[0.4pc]
 [\mu^{p}_{\varepsilon/2},\mu^{q}_{\varepsilon/2}]_{{\mathfrak k}_{\varepsilon}} = -[\nu^{p}_{\varepsilon/2},\nu^{q}_{\varepsilon/2}]_{{\mathfrak k}_{\varepsilon}}= \frac{1}{2}([\mu^{p}_{\varepsilon/2},\mu^{q}_{\varepsilon/2}]-[\nu^{p}_{\varepsilon/2},\nu^{q}_{\varepsilon/2}]),\\[0.4pc]
 [\mu^{p}_{\varepsilon/2},\nu^{q}_{\varepsilon/2}]_{{\mathfrak m}_{\varepsilon}} = [\nu^{p}_{\varepsilon/2},\mu^{q}_{\varepsilon/2}]_{{\mathfrak m}_{\varepsilon}} = \frac{1}{2}([\mu^{p}_{\varepsilon/2},\nu^{q}_{\varepsilon/2}] + [\nu^{p}_{\varepsilon/2},\mu^{q}_{\varepsilon/2}]),\\[0.4pc]
 [\mu^{p}_{\varepsilon/2},\nu^{q}_{\varepsilon/2}]_{\mathfrak a} = -\frac{\delta_{pq}}{2}X,
\end{array}
$$
for all $j,k = 1,\dots, m_{\varepsilon}$ and $p,q = 1,\dots, m_{\varepsilon/2}.$
\end{lemma}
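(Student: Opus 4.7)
The plan is to pass to the complexification $\mathfrak{g}^{\mathbb{C}}$ and exploit the restricted root space decomposition with respect to $\mathfrak{a}^{\mathbb{C}}$. From (4.3) together with $\lambda = \sqrt{-1}\,\lambda_{\mathbb{R}}$, the combinations
\[
Z_{\lambda}^{\pm,j} := \mu_{\lambda}^{j} \pm \sqrt{-1}\,\nu_{\lambda}^{j}
\]
satisfy $[X,Z_{\lambda}^{\pm,j}] = \pm\lambda(X)\,Z_{\lambda}^{\pm,j}$, so they lie in the root spaces $\mathfrak{g}^{\mathbb{C}}_{\pm\lambda}$. I would then read off each identity by expanding a suitable bracket of the $Z^{\pm}$'s, using the standard inclusion $[\mathfrak{g}^{\mathbb{C}}_{\alpha},\mathfrak{g}^{\mathbb{C}}_{\beta}] \subset \mathfrak{g}^{\mathbb{C}}_{\alpha+\beta}$ (with the convention that this space is zero if $\alpha+\beta \notin \Sigma \cup \{0\}$) and comparing real and imaginary parts.

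Since $2\varepsilon, 3\varepsilon/2 \notin \Sigma$, the brackets $[Z_{\varepsilon}^{+,j},Z_{\varepsilon}^{+,k}]$ and $[Z_{\varepsilon}^{+,j},Z_{\varepsilon/2}^{+,p}]$ both vanish, and their real and imaginary parts yield directly the four identities involving $\varepsilon$-only or mixed $\varepsilon,\varepsilon/2$ brackets. For the explicit values $-\delta_{jk}X$ and $-\delta_{pq}X/2$, Lemma \ref{lbrack1} places the respective brackets in $\mathfrak{a} = \mathbb{R} X$, and the $\mathrm{Ad}$-invariance of $\langle\cdot,\cdot\rangle$ combined with (4.3) and $\|X\|=1$ gives
\[
\langle [\mu_{\lambda}^{j},\nu_{\lambda}^{k}],X\rangle = -\langle \nu_{\lambda}^{k},[\mu_{\lambda}^{j},X]\rangle = -\lambda_{\mathbb{R}}(X)\,\delta_{jk},
\]
which equals $-\delta_{jk}$ for $\lambda=\varepsilon$ and $-\delta_{pq}/2$ for $\lambda=\varepsilon/2$.

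The $\varepsilon/2$-only identities are more delicate because, by Lemma \ref{lbrack1}, $[\mathfrak{m}_{\varepsilon/2},\mathfrak{m}_{\varepsilon/2}] \subset \mathfrak{h}\oplus\mathfrak{k}_{\varepsilon}$ and $[\mathfrak{m}_{\varepsilon/2},\mathfrak{k}_{\varepsilon/2}] \subset \mathfrak{a}\oplus\mathfrak{m}_{\varepsilon}$ are not irreducible, so one must separate the two summands by using two distinct bracket computations. On the one hand, $[Z_{\varepsilon/2}^{+,p},Z_{\varepsilon/2}^{+,q}] \in \mathfrak{g}^{\mathbb{C}}_{\varepsilon}$ has vanishing $\mathfrak{g}^{\mathbb{C}}_{0}$-component, forcing $[\mu^{p}_{\varepsilon/2},\mu^{q}_{\varepsilon/2}]_{\mathfrak{h}} = [\nu^{p}_{\varepsilon/2},\nu^{q}_{\varepsilon/2}]_{\mathfrak{h}}$ and $[\mu^{p}_{\varepsilon/2},\nu^{q}_{\varepsilon/2}]_{\mathfrak{a}} = -[\nu^{p}_{\varepsilon/2},\mu^{q}_{\varepsilon/2}]_{\mathfrak{a}}$. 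On the other hand, $[Z_{\varepsilon/2}^{+,p},Z_{\varepsilon/2}^{-,q}] \in \mathfrak{g}^{\mathbb{C}}_{0}$ has vanishing $\mathfrak{g}^{\mathbb{C}}_{\pm\varepsilon}$-component, which produces $[\mu^{p}_{\varepsilon/2},\mu^{q}_{\varepsilon/2}]_{\mathfrak{k}_{\varepsilon}} = -[\nu^{p}_{\varepsilon/2},\nu^{q}_{\varepsilon/2}]_{\mathfrak{k}_{\varepsilon}}$ and $[\mu^{p}_{\varepsilon/2},\nu^{q}_{\varepsilon/2}]_{\mathfrak{m}_{\varepsilon}} = [\nu^{p}_{\varepsilon/2},\mu^{q}_{\varepsilon/2}]_{\mathfrak{m}_{\varepsilon}}$. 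Combining the equal $\mathfrak{h}$-components with the opposite $\mathfrak{k}_{\varepsilon}$-components shows that $[\mu^{p}_{\varepsilon/2},\mu^{q}_{\varepsilon/2}] - [\nu^{p}_{\varepsilon/2},\nu^{q}_{\varepsilon/2}]$ equals exactly $2[\mu^{p}_{\varepsilon/2},\mu^{q}_{\varepsilon/2}]_{\mathfrak{k}_{\varepsilon}}$, yielding the halved-difference formula; the analogous argument with $\mathfrak{a}$ and $\mathfrak{m}_{\varepsilon}$ provides the halved-sum formula.

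The main obstacle I anticipate is the careful bookkeeping in this last step: correctly identifying, for each $Z^{\pm}$-bracket, which real or imaginary projection lives in which of the pieces $\mathfrak{h}, \mathfrak{a}, \mathfrak{k}_{\varepsilon}, \mathfrak{m}_{\varepsilon}$, and avoiding sign errors when matching the four relations across the two separate bracket computations.
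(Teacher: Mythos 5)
Your proof is correct and complete, but it is packaged differently from the paper's. The paper stays in the real Lie algebra and uses the polarized derivation identity
\[
{\rm ad}^{2}_{X}[U,V] = [{\rm ad}^{2}_{X}U,V] + [U,{\rm ad}_{X}^{2}V] + 2[{\rm ad}_{X}U,{\rm ad}_{X}V],
\]
combined with (\ref{mk}), Lemma \ref{lbrack1} and (\ref{eq.ms4.3}): one knows from Lemma \ref{lbrack1} which ${\rm ad}_X^2$-eigenspace each bracket lands in, and comparing eigenvalues forces the stated relations. You instead complexify, observe that $Z^{\pm,j}_\lambda=\mu^j_\lambda\pm\sqrt{-1}\,\nu^j_\lambda$ span the restricted root spaces ${\mathfrak g}^{\mathbb C}_{\pm\lambda}$, and read everything off from $[{\mathfrak g}^{\mathbb C}_\alpha,{\mathfrak g}^{\mathbb C}_\beta]\subset{\mathfrak g}^{\mathbb C}_{\alpha+\beta}$ together with $2\varepsilon,\tfrac{3}{2}\varepsilon\notin\Sigma$; this is exactly the structural reason the paper's ${\rm ad}_X^2$ computation works, so the two arguments extract the same information from the same eigenstructure. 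Your version has two small advantages: the separation of the ${\mathfrak h}$- versus ${\mathfrak k}_\varepsilon$-components (and ${\mathfrak a}$- versus ${\mathfrak m}_\varepsilon$-components) via the two brackets $[Z^{+},Z^{+}]$ and $[Z^{+},Z^{-}]$ is transparent, and you make explicit the ${\rm Ad}$-invariance computation $\langle[\mu^j_\lambda,\nu^k_\lambda],X\rangle=-\lambda_{\mathbb R}(X)\delta_{jk}$ that pins down the constants $-\delta_{jk}X$ and $-\tfrac{\delta_{pq}}{2}X$, a step the paper leaves implicit in ``applying (\ref{eq.ms4.3})''. The paper's version avoids the complexification and the attendant real/imaginary bookkeeping. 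Both are sound.
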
  
 \begin{proof} Using that ${\rm ad}_{X}\colon {\mathfrak g}\to {\mathfrak g}$ is a Lie derivation, one gets
 \[
 {\rm ad}^{2}_{X}[U,V] = [{\rm ad}^{2}_{X}U,V] + [U,{\rm ad}_{X}^{2}V] + 2[{\rm ad}_{X}U,{\rm ad}_{X}V],\quad U,V\in {\mathfrak g}.
 \]
 From here, applying (\ref{mk}), (\ref{brack1}) and (\ref{eq.ms4.3}) these equalities are proved.
 \end{proof}
 
 Next we analyze the tangent bundle $T(G/K)$ and the tangent sphere bundle $T_{r}(G/K),$ of radius $r>0,$ of $G/K.$ 
 
 Let us first observe that on the trivial vector bundle $G\times{\mathfrak m}$ there are two Lie group actions that commute with each other: the left $G$-action $l_b \colon (a,x)\mapsto (ba,x)$ and the right $K$-action $r_k \colon (a,x)\mapsto (ak,{\rm Ad}_{k^{-1}}x).$ Let $\pi \colon G\times {\mathfrak m}\to G\times_K {\mathfrak m},$ $(a,x)\mapsto [(a,x)],$ be the natural projection for this right $K$-action. Then $\pi$ is $G$-equivariant and the mapping $\phi$ given by
\begin{equation}\label{eq.phi}
\phi \colon G\times_K {\mathfrak m}\to T(G/K),
\quad
[(a,x)]\mapsto (\tau_{a})_{*o}x,
\end{equation}
is a $G$-equivariant diffeomorphism which allows us to identify $T(G/K)$ with $G\times_{K}{\mathfrak m}.$ Then, taking into account that each translation $\tau_{a}$ is an isometry of $(G/K,g),$ it follows from (\ref{eq.phi}) that
\begin{equation}\label{TrGK}
T_{r}(G/K) = \phi(G\times_{K}{\mathcal S}_{\mathfrak m}(r)),
\end{equation} 
where ${\mathcal S}_{\mathfrak m}(r) = \{x\in {\mathfrak m}\colon \langle x,x\rangle = r^{2}\}$ is the sphere of radius $r>0$ in the Euclidean space $({\mathfrak m},\langle\cdot,\cdot\rangle).$  

On the other hand, let $W$ be the Weyl chamber in ${\mathfrak a}$ containing $X,$ 
\[
W = \{ w\in {\mathfrak a}\colon \varepsilon_{\mathbb R}(w)>0\} = \{tX\;:\; t\in {\mathbb R}^{+}\},
\]
which is naturally identified with ${\mathbb R}^{+}.$ Since each nonzero ${\rm Ad}(K)$-orbit in ${\mathfrak m}$ intersects ${\mathfrak a}$ and also $W,$ it follows that ${\mathfrak m}^{R} = {\mathfrak m}\setminus \{0\},$ where ${\mathfrak m}^{R}$ denotes the (open dense) subset of {\em regular points} ${\mathfrak m}^{R}:= {\rm Ad}(K)(W)$ of ${\mathfrak m}.$ Hence, we have:
\begin{enumerate}
\item[{\rm (i)}] the ${\rm Ad}(K)$-action on the unit sphere ${\mathcal S}_{\mathfrak m}(1)$ of ${\mathfrak m}$ is transitive, or equivalently,
\begin{equation}\label{SAd}
{\mathcal S}_{\mathfrak m}(r) = {\rm Ad}(K)rX,\quad r>0.
\end{equation}
\item[{\rm (ii)}] the mapping
\begin{equation}\label{f+}
\chi\colon G/H\times {\mathbb R}^{+}\to G\times_{K}({\mathfrak m}\setminus\{0\}),\quad (aH,t)\mapsto [(a,tX)],
\end{equation}
 is a well-defined $G$-equivariant diffeomorphism.
\end{enumerate} 
 The  property (i) is a well-known characterization for the class of compact rank-one symmetric spaces and this implies that for the simply connected case all their geodesics are closed (see \cite{Be1}). From (\ref{eq.phi}) and (\ref{f+}), $G/H\times {\mathbb R}^{+}$ is identified with the punctured tangent bundle $D(G/K) = T(G/K)\setminus\{\mbox{\rm zero section}\}$ of $T(G/K)$ via the $G$-equivariant diffeomorphism $\phi\circ \chi$ and,
using (\ref{TrGK}) and (\ref{SAd}), $T_{r}(G/K)$ with $G/H,$ i.e.
  \[
 T_{r}(G/K) = (\phi\circ\chi)(G/H\times\{r\})\cong G/H.
 \]
 
 According with \cite{JC}, the decomposition $\overline{\mathfrak m}={\mathfrak a}\oplus {\mathfrak m}_{\varepsilon}\oplus {\mathfrak m}_{\varepsilon/2} \oplus {\mathfrak k}_{\varepsilon}\oplus {\mathfrak k}_{\varepsilon/2}$ is ${\rm Ad}(H)$-irreducible and moreover, for $G/K\in \{{\mathbb C}{\bf P}^{n}\;(n\geq 1)\},$ because ${\mathfrak a}$, ${\mathfrak m}_{\varepsilon}$ and ${\mathfrak k}_{\varepsilon}$ are one-dimensional, ${\rm Ad}(H)$ acts trivially on $\overline{\mathfrak n} : = {\mathfrak a}\oplus {\mathfrak m}_{\varepsilon}\oplus {\mathfrak k}_{\varepsilon}.$ Then we have the following properties:
\begin{enumerate}
\item[{\rm (i)}]  ${\mathfrak g} = {\mathfrak h}\oplus\overline{\mathfrak m}$ is a reductive decomposition associated to $G/H,$
\item[{\rm (ii)}] the tangent space $T_{o_{H}}(G/H)$ at the origin $o_{H} = \{H\}$ is identified with $\overline{\mathfrak m},$ and
\item[{\rm (iii)}] any $G$-invariant Riemannian metric ${\mathbf g} = {\bf g}^{a_{0},a_{\lambda},b_{\lambda}},$ $\lambda\in \Sigma^{+},$ on $G/H$ is determined by 
\begin{equation}\label{Bb}
{\mathbf g}_{o_{H}} = a_{0}\langle\cdot,\cdot\rangle_{\mathfrak a} + a_{\varepsilon}\langle\cdot,\cdot\rangle_{{\mathfrak m}_{\varepsilon}} +  b_{\varepsilon}\langle\cdot,\cdot\rangle_{{\mathfrak k}_{\varepsilon}}  + a_{\varepsilon/2}\langle\cdot,\cdot\rangle_{{\mathfrak m}_{\varepsilon/2}}    + b_{\varepsilon/2}\langle\cdot,\cdot\rangle_{{\mathfrak k}_{\varepsilon/2}},
\end{equation}
where $a_{0},a_{\lambda}$ and $b_{\lambda},$ for $\lambda\in \Sigma^{+},$ are positive constants, except for $G/K\in \{{\mathbb C}{\bf P}^{n}\;(n\geq 1)\},$ where
\begin{equation}\label{BbCP}
{\bf g}_{o_{H}} = \prec\cdot,\cdot\succ + a_{\varepsilon/2}\langle\cdot,\cdot\rangle_{{\mathfrak m}_{\varepsilon/2}}    + b_{\varepsilon/2}\langle\cdot,\cdot\rangle_{{\mathfrak k}_{\varepsilon/2}},
\end{equation}
$\prec\cdot,\cdot\succ$ being an arbitrary inner product on $\overline{\mathfrak n}.$
\end{enumerate}

\begin{definition}{\rm A $G$-invariant metric ${\bf g}$ on $G/H$ such that at the origin is expressed as in (\ref{Bb}) is said to be an} orthogonal metric.
\end{definition}
As we will see in the next section, there are many $G$-invariant contact metrics in $T_{r}{\mathbb C}{\bf P}^{n}$ that are not orthogonal.

Under the previous identification $\phi\circ\chi,$ the metric $\overline{\bf g} = r^{2}{\bf g} + dr^{2}$ on the me\-tric cone ${\mathcal C}(G/H)$ gives an $G$-invariant Riemannian metric, also denoted by $\overline{\bf g},$ on $D(G/K).$ Then, taking $T_{1}G/K\cong G/H\times \{1\}$ as submanifold of $G/H\times{\mathbb R}^{+}\cong D(G/K),$ using Corollary \ref{cclasses} and the characterizations described in Section \ref{Preliminares} for Sasakian, Sasakian-Einstein and $3$-Sasakian structures, the following result is proved.

\begin{theorem}\label{tmain} Each $G$-invariant contact metric, Sasakian, Sasakian-Einstein or $3$-Sasakian structure $(\xi,{\bf g})$ on the unit tangent sphere $T_{1}(G/K$) can be extended, respectively, to a unique $G$-invariant almost K\"ahler, K\"ahler, K\"ahler Ricci-flat or hyperK\"ahler structure on the metric punctured tangent bundle $(D(G/K),\overline{\bf g}).$ 
\end{theorem}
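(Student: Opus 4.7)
The plan is to leverage the fact that, under the $G$-equivariant diffeomorphism $\phi \circ \chi$ introduced in (\ref{f+}), the punctured tangent bundle $D(G/K)$ is identified with $G/H \times \mathbb{R}^+$, and the metric $\overline{\mathbf{g}} = r^2 \mathbf{g} + dr^2$ is then precisely the cone metric on $\mathcal{C}(T_1(G/K))$. Since $G$ acts on $G/H \times \mathbb{R}^+$ by translation on the first factor only, while the radial vector field $\partial/\partial r$ and the warping function $r$ are $G$-invariant, any tensor built on the cone from $G$-invariant contact data on $T_1(G/K)$ through the formula (\ref{JJ}) will automatically be $G$-invariant.

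With this in place, the first assertion is immediate from Corollary \ref{cclasses}: given a $G$-invariant contact metric structure $(\xi,\mathbf{g})$ on $T_1(G/K)$, define $J$ on $D(G/K)$ by (\ref{JJ}); since $\varphi, \xi, \eta$ are $G$-invariant and the cone factors transform trivially, $J$ is $G$-invariant, and Proposition \ref{pcontact} then gives that $(J,\overline{\mathbf{g}})$ is almost Kähler, with uniqueness of the extension built into Corollary \ref{cclasses}. For the Sasakian case, the normality condition for $(\varphi,\xi,\eta,\mathbf{g})$ is equivalent to integrability of the $J$ defined by (\ref{JJ}), upgrading the almost Kähler structure to a $G$-invariant Kähler one on $\mathcal{C}(T_1(G/K)) = D(G/K)$; this is the classical correspondence recalled just before Proposition \ref{pcontact}. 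For the Sasakian-Einstein case, the warped-product Ricci identities show that $\mathbf{g}$ being Einstein with constant $2n$ (with $\dim T_1(G/K) = 2n+1$) forces $\overline{\mathbf{g}}$ to be Ricci-flat, so the Kähler structure on the cone is Calabi-Yau.

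For the $3$-Sasakian case, apply the Sasakian-to-Kähler extension to each of the three Sasakian structures $(\xi_i,\mathbf{g})$, obtaining $G$-invariant Kähler structures $(J_i,\overline{\mathbf{g}})$ on $D(G/K)$, $i=1,2,3$. The remaining point, which I expect to be the main computational step, is to verify the quaternionic identities $J_i J_j = -\delta_{ij} \,\mathrm{Id} + \varepsilon_{ijk} J_k$. These follow by substituting (\ref{JJ}) for each $J_i$ and reducing to the algebraic relations $g(\xi_i,\xi_j)=\delta_{ij}$ together with the consequences $\varphi_i \xi_j = \varepsilon_{ijk}\xi_k$ (for $i\neq j$) of the bracket identity $[\xi_i,\xi_j] = 2\varepsilon_{ijk}\xi_k$ via (\ref{varphiK}); this computation is standard (cf.\ \cite{BoyerGalicki:3Sasakian}). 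The resulting hyperKähler structure is $G$-invariant because each $J_i$ is.

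Finally, uniqueness in each class follows because identifying $T_1(G/K)$ with the hypersurface $T_1(G/K) \times \{1\} \subset D(G/K)$ recovers $(\xi,\mathbf{g})$ from $(J,\overline{\mathbf{g}})$ through $\xi = -J(\partial/\partial r)|_{r=1}$ and $\eta = \mathbf{g}(\xi,\cdot)$, as in the proof of Proposition \ref{pcontact}, so the extension produced by (\ref{JJ}) is the only $G$-invariant one compatible with the given structure.
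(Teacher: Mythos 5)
Your proposal is correct and follows essentially the same route as the paper: the paper identifies $D(G/K)$ with the metric cone ${\mathcal C}(T_{1}(G/K))$ via $\phi\circ\chi$ and then invokes Corollary \ref{cclasses} together with the holonomy characterizations of Sasakian, Sasakian--Einstein and $3$-Sasakian structures from Section \ref{Preliminares}. You merely spell out the $G$-invariance of the extension and the recovery of $(\xi,{\bf g})$ on the hypersurface $T_{1}(G/K)\times\{1\}$, which the paper leaves implicit.
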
 

   \begin{remark}\label{rex}{\rm From the definition of $\chi$ in (\ref{f+}), one gets
 \[
 \chi_{*(o_{H},t)}(X,0) = \pi_{*(e,tX)}(X,0),\quad t\in {\mathbb R}^{+}.
 \]
 Then, $\overline{\bf g}$ on $G\times_{K}({\mathfrak m}\setminus \{0\})$ satisfies 
 \[
 \overline{\bf g}_{[(e,tX)]}(\pi_{*(e,tX)}(X,0),\pi_{*(e,tX)}(X,0)) = \overline{\bf g}_{(o_{H},t)}((X,0),(X,0)) = t^{2}a_{0}.
 \] 
 Hence, $\lim_{t\to 0^{+}}\overline{\bf g}_{[(e,tX)]}(\pi_{*(e,tX)}(X,0),\pi_{*(e,tX)}(X,0)) = 0$ and so $\overline{\bf g}$ cannot be extended to a Riemannian metric on all $G\times_{K}{\mathfrak m}\cong T(G/K).$}

\end{remark}

 \section{Invariant contact metric structures}
 
 \setcounter{equation}{0} 
 
In order to obtain all $G$-invariant contact metric structures on $T_{r}(G/K),$ we analyze the space of $G$-invariant vector fields on $T_{r}(G/K) = G/H$ or, equivalently, the subspace ${\rm Inv}(\overline{\mathfrak m})\subset \overline{\mathfrak m}$ of ${\rm Ad}(H)$-invariant vectors of $\overline{\mathfrak m},$ as candidates to be characteristic vector fields. This space contains the standard vector field $\xi^{S}$ on $T_{r}(G/K),$ for any compact rank-one symmetric space $G/K.$ In fact $\xi^{S}$ is the $G$-invariant vector field on $G/H\times {\mathbb R}^{+}$ such that $\xi^{S}_{(o_{H},t)} = (X,0),$ for all $t\in {\mathbb R}^{+},$ \cite[Proposition 4.8]{JC}. 

Two cases for $G/K$ are distinguished depending on whether $\dim {\rm Inv}(\overline{\mathfrak m})$ is one or greater than one.

\begin{theorem}\label{maingeneral1} If $G/K\in \{{\mathbb S}^{n},{\mathbb R}{\bf P}^{n}(n\geq 3),{\mathbb H}{\bf P}^{n} (n\geq 1), {\mathbb C}a{\bf P}^{2}\},$ any $G$-invariant contact metric structure on the tangent sphere bundle $T_{r}(G/K),$ $r>0,$ is given by the pairs $(\xi,{\bf g}),$ where $\xi =\pm \frac{1}{\kappa}\xi^{S},$ for some $\kappa>0,$ i.e. $\xi_{o_{H}} = \pm\frac{1}{\kappa}X,$ and the metric ${\bf g}$ is determined by
  \begin{equation}\label{gc}
 {\bf g}_{o_{H}} = \kappa^{2}\langle\cdot,\cdot\rangle_{\mathfrak a} + \frac{\kappa}{2q_{\varepsilon}}\langle\cdot,\cdot\rangle_{{\mathfrak m}_{\varepsilon}} + \frac{\kappa q_{\varepsilon}}{2}\langle\cdot,\cdot\rangle_{{\mathfrak k}_{\varepsilon}} + \frac{\kappa}{4q_{\varepsilon/2}}\langle\cdot,\cdot\rangle_{{\mathfrak m}_{\varepsilon/2}}  + \frac{\kappa q_{\varepsilon/2}}{4}\langle\cdot,\cdot\rangle_{{\mathfrak k }_{\varepsilon/2}},
 \end{equation}
 for some positive constants $q_{\lambda},$ $\lambda\in \Sigma^{+}.$ Moreover, $(\xi,{\bf g})$ is K-contact, indeed Sasakian, if and only if $q_{\lambda} = 1.$ 
 \end{theorem}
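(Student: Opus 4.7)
The overall strategy is to exploit two facts established earlier in the paper: for the listed $G/K$ the space of $\operatorname{Ad}(H)$-invariant vectors in $\overline{\mathfrak m}$ is one-dimensional (spanned by $X\in\mathfrak a$), and every $G$-invariant metric on $G/H = T_r(G/K)$ is orthogonal of the form (\ref{Bb}). Hence any candidate pair $(\xi,{\bf g})$ must have $\xi_{o_H}=sX$ for some $s\in\mathbb R$; the normalization $1={\bf g}(\xi,\xi) = s^{2}a_{0}$ forces $s = \pm\kappa^{-1}$ with $\kappa := \sqrt{a_{0}}$, so the first assertion $\xi = \pm\xi^{S}/\kappa$ is immediate, and only the internal parameters $a_{\varepsilon}, b_{\varepsilon}, a_{\varepsilon/2}, b_{\varepsilon/2}$ remain to be pinned down by the contact-metric axioms.

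To do so I first compute $d\eta_{o_{H}}$ via the standard invariant-form identity
\[
2d\eta_{o_H}(\mu,\nu) = -\eta_{o_H}([\mu,\nu]_{\overline{\mathfrak m}}),
\]
a consequence of $[\mu^\tau,\nu^\tau]_{o_H}=[\mu,\nu]_{\overline{\mathfrak m}}$ and the constancy of $\eta(\mu^\tau)$. Because $\eta_{o_H}(Z) = \pm\kappa\langle X,Z_{\mathfrak a}\rangle$ detects only the $\mathfrak a$-component, Lemma \ref{pbrack} pinpoints the sole non-vanishing bracket contributions as $[\mu_\varepsilon^j,\nu_\varepsilon^k]_{\mathfrak a} = -\delta_{jk}X$ and $[\mu_{\varepsilon/2}^p,\nu_{\varepsilon/2}^q]_{\mathfrak a} = -\tfrac{1}{2}\delta_{pq}X$, giving
\[
d\eta_{o_H}(\mu_\varepsilon^j,\nu_\varepsilon^k) = \pm\tfrac{\kappa}{2}\delta_{jk}, \qquad d\eta_{o_H}(\mu_{\varepsilon/2}^p,\nu_{\varepsilon/2}^q) = \pm\tfrac{\kappa}{4}\delta_{pq},
\]
with all other basis pairs yielding zero. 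In particular $X\in\ker d\eta_{o_H}$, so $\varphi_{o_H}$, defined by ${\bf g}(U,\varphi V) = d\eta(U,V)$, automatically kills $X$ and hence $\xi$.

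Inverting against the orthogonal metric, a direct calculation produces
\[
\varphi\mu_\varepsilon^j = -\tfrac{\kappa}{2b_\varepsilon}\nu_\varepsilon^j,\qquad \varphi\nu_\varepsilon^j = \tfrac{\kappa}{2a_\varepsilon}\mu_\varepsilon^j,
\]
and analogous formulas on $\mathfrak m_{\varepsilon/2}\oplus\mathfrak k_{\varepsilon/2}$ with $\kappa/4$ in place of $\kappa/2$. Imposing $\varphi_{o_H}^{2} = -\mathrm{Id}+\eta\otimes\xi$ on $X^\perp$ then forces $a_\varepsilon b_\varepsilon = \kappa^{2}/4$ and $a_{\varepsilon/2}b_{\varepsilon/2} = \kappa^{2}/16$; introducing parameters $q_\lambda>0$ by $a_\lambda/b_\lambda = q_\lambda^{-2}$ recovers exactly (\ref{gc}). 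The remaining compatibility ${\bf g}(\varphi U,\varphi V) = {\bf g}(U,V)-\eta(U)\eta(V)$ is automatic from the skew-adjointness of $\varphi$ combined with the identity $\varphi^{2} = -\mathrm{Id}+\eta\otimes\xi$, so every pair produced this way genuinely is contact metric.

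For the $K$-contact clause I apply the Killing criterion (\ref{Killing}) to $\mu = \xi_{o_H} = \pm X/\kappa$. By (\ref{U}) and the $\mathrm{ad}_{X}$-eigenvector relations (\ref{eq.ms4.3}), $2{\bf g}(\mathfrak U(\alpha,\beta),X)$ vanishes on all basis pairs except $(\mu_\varepsilon^j,\nu_\varepsilon^k)$ and $(\mu_{\varepsilon/2}^p,\nu_{\varepsilon/2}^q)$, where it equals $(a_\varepsilon-b_\varepsilon)\delta_{jk}$ and $\tfrac{1}{2}(a_{\varepsilon/2}-b_{\varepsilon/2})\delta_{pq}$ respectively. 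Hence $\xi$ is Killing precisely when $a_\lambda = b_\lambda$, i.e.\ $q_\lambda = 1$; the stronger ``indeed Sasakian'' claim is deferred to Theorem \ref{t1}. The main technical nuisance throughout is the bookkeeping in the $\varphi$-step---carefully inverting the metric-compatibility relation in the chosen basis and handling the two restricted-root subspaces in parallel, with their different coupling factors coming from $\varepsilon_{\mathbb R}(X)=1$ versus $(\varepsilon/2)_{\mathbb R}(X)=1/2$.
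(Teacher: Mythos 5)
Your argument is essentially correct, but it takes a more self-contained route than the paper. The paper's own proof of Theorem \ref{maingeneral1} consists of exactly two steps: the observation that, since $\overline{\mathfrak m}={\mathfrak a}\oplus{\mathfrak m}_{\varepsilon}\oplus{\mathfrak k}_{\varepsilon}\oplus{\mathfrak m}_{\varepsilon/2}\oplus{\mathfrak k}_{\varepsilon/2}$ is ${\rm Ad}(H)$-irreducible with all non-trivial summands of dimension greater than one, ${\rm Inv}(\overline{\mathfrak m})={\mathbb R}X$ and hence $\xi=\pm\frac{1}{\kappa}\xi^{S}$; and then a citation of \cite[Theorem 6.1]{JC1} for the determination of the admissible metrics and the $K$-contact/Sasakian clause. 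You perform the first step identically and then replace the citation by a direct computation: evaluating $2d\eta(u,v)=-\eta([u,v]_{\overline{\mathfrak m}})$ against Lemmas \ref{lbrack1} and \ref{pbrack}, solving ${\bf g}(\cdot,\varphi\cdot)=d\eta$ blockwise, and imposing $\varphi^{2}=-{\rm I}+\eta\otimes\xi$ to get $a_{\varepsilon}b_{\varepsilon}=\kappa^{2}/4$ and $a_{\varepsilon/2}b_{\varepsilon/2}=\kappa^{2}/16$, which is precisely the parametrization (\ref{gc}); the Killing criterion (\ref{Killing}) then yields $q_{\lambda}=1$. This is in fact the same method the paper deploys for the harder case of Theorem \ref{maingeneralCP1}, so your version has the merit of making the present theorem independent of \cite{JC1}, at the cost of the bookkeeping you acknowledge. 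I verified the coefficients: the $\mathfrak a$-components $[\mu^{j}_{\varepsilon},\nu^{k}_{\varepsilon}]=-\delta_{jk}X$ and $[\mu^{p}_{\varepsilon/2},\nu^{q}_{\varepsilon/2}]_{\mathfrak a}=-\frac{1}{2}\delta_{pq}X$ give exactly the stated values of $d\eta$, and the Killing computation $2{\bf g}({\mathfrak U}(\mu^{j}_{\varepsilon},\nu^{k}_{\varepsilon}),X)=(a_{\varepsilon}-b_{\varepsilon})\delta_{jk}$ is correct.

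Two loose ends. First, both your argument and the paper's rest on item (iii) of Section 4, namely that every $G$-invariant metric on $G/H$ is block-diagonal of the form (\ref{Bb}) for these spaces; you should state that you are invoking this (the summands ${\mathfrak m}_{\varepsilon}$ and ${\mathfrak k}_{\varepsilon}$ are equivalent ${\rm Ad}(H)$-modules, e.g.\ for ${\mathbb S}^{n}$, so the absence of cross terms is a genuine input, not a formality). Second, your deferral of the ``indeed Sasakian'' clause to Theorem \ref{t1} does not work: that theorem concerns only $T_{r}{\mathbb C}{\bf P}^{n}$, whereas here the Sasakian claim for ${\mathbb S}^{n}$, ${\mathbb R}{\bf P}^{n}$, ${\mathbb H}{\bf P}^{n}$ and ${\mathbb C}{\rm a}{\bf P}^{2}$ must either be imported from \cite[Theorem 6.1]{JC1} (as the paper does) or proved by a normality computation analogous to the one the paper carries out for $(\xi^{\kappa}_{3},{\bf g}^{\kappa}_{3})$ in the proof of Theorem \ref{t1}. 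As written, your proof establishes the classification and the $K$-contact equivalence but leaves the Sasakian assertion unjustified.
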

 
 \begin{proof} Let ${\bf g}$ be any $G$-invariant Riemannian metric ${\mathbf g} = {\bf g}^{a_{0},a_{\lambda},b_{\lambda}},$ $\lambda\in \Sigma^{+},$ on $G/H$ defined in (\ref{Bb}) and let $\xi$ be any unit $G$-invariant vector field on $(G/H,{\bf g}).$ Then $\xi_{o_{H}}$ in $\overline{\mathfrak m}$ is ${\rm Ad}(H)$-invariant and, therefore, the subspace ${\mathbb R}\xi_{o_{H}}$ must be ${\rm Ad}(H)$-irreducible. According with Table I, taking into account that the decomposition $\overline{\mathfrak m} = {\mathfrak a}\oplus {\mathfrak m}_{\varepsilon} \oplus {\mathfrak k}_{\varepsilon}\oplus {\mathfrak m}_{\varepsilon/2} \oplus {\mathfrak k}_{\varepsilon/2}$ is ${\rm Ad}(H)$-irreducible and the dimensions of the subspaces ${\mathfrak m}_{\varepsilon},$ ${\mathfrak k}_{\varepsilon},$ ${\mathfrak m}_{\varepsilon/2}$ and ${\mathfrak k}_{\varepsilon/2}$ are different from one, we have that ${\rm Inv}(\overline{\mathfrak m}) = {\mathbb R}X.$  Then $\xi = \pm \frac{1}{\kappa}\xi^{S}$ and the result has already been proven in \cite[Theorem 6.1]{JC1}.
\end{proof}
 
 If $G/K\in\{{\mathbb C}{\bf P}^{n}\,(n\geq 1)\}$ then ${\rm Inv}(\overline{\mathfrak m}) = \overline{\mathfrak n}$ and any $G$-invariant Riemannian metric ${\bf g}$ on $T_{r}{\mathbb C}{\bf P}^{n} = G/H$ is expressed at the origin as in (\ref{BbCP}). Put 
  $$
 \begin{array}{l}
 a_{0} = \prec X,X\succ,\quad a_{\varepsilon} = \prec\mu_{\varepsilon},\mu_{\varepsilon}\succ,\quad b_{\varepsilon} = \prec\nu_{\varepsilon},\nu_{\varepsilon}\succ,\\[0.4pc]
 a_{0\varepsilon} = \prec X,\mu_{\varepsilon}\succ,\quad b_{0\varepsilon} = \prec X,\nu_{\varepsilon}\succ,\quad c_{\varepsilon} = \prec \mu_{\varepsilon},\nu_{\varepsilon}\succ,
 \end{array}
 $$
 where $\{X,\mu_{\varepsilon},\nu_{\varepsilon}\}$ is an orthonormal basis of $(\overline{\mathfrak n},\langle\cdot,\cdot\rangle)$ satisfying (\ref{eq.ms4.3}). Then ${\bf g}$ is orthogonal if and only if $a_{0\varepsilon} = b_{0\varepsilon} = c_{\varepsilon} = 0.$ Next, consider the (unique) orthonormal basis $\{\xi_{1},\xi_{2},\xi_{3}\}$ of $(\overline{\mathfrak n},\prec\cdot,\cdot\succ)$ satisfying the following conditions:
\begin{enumerate}
\item[{\rm (i)}] $\xi_{1} = \frac{1}{\rho}X,$ for some $\rho >0,$ and $\xi_{2}\in {\mathbb R}\{X,\mu_{\varepsilon}\},$
\item[{\rm (ii)}] the bases $\{\xi_{1},\xi_{2}\}$ and $\{\xi_{1},\xi_{2},\xi_{3}\}$ have, respectively, the same orientation than $\{X,\mu_{\varepsilon}\}$ and $\{X,\mu_{\varepsilon},\nu_{\varepsilon}\}.$
\end{enumerate} 
 Therefore the matrix $A = (a_{ij})$ such that $\xi_{j} = a_{j1}X + a_{j2}\mu_{\varepsilon} + a_{j3}\nu_{\varepsilon},$ $j = 1,2,3,$ determines the inner product $\prec\cdot,\cdot\succ$ and it is expressed as
 \begin{equation}\label{A}
 A = 
 \left (
 \begin{array}{ccc}
 \frac{1}{\rho} & 0 & 0\\[0.4pc]
 a_{21} & a_{22} & 0\\[0.4pc]
 a_{31}& a_{32} & a_{33}
 \end{array}
 \right),
\end{equation}
where $a_{22}$ and $a_{33}$ are positive numbers. Note that ${\bf g}$ is orthogonal if and only if $A$ is diagonal.
\begin{remark}{\rm  From \cite[Proposition 4.8]{JC}, the induced metric $\widetilde{g}^{S}$ of the Sasaki metric $g^{S}$ on $T_{r}(G/K)$ is $G$-invariant, determined by 
\begin{equation}\label{gsasaki}
\widetilde{g}^{S}_{o_{H}} = \left\{
\begin{array}{lcl}
\langle\cdot,\cdot\rangle_{\mathfrak m} + r^{2}\langle\cdot,\cdot\rangle_{{\mathfrak k}_{\varepsilon}}, &\mbox{if} & G/K = {\mathbb S}^{n}\;\mbox{or}\;\; {\mathbb R}{\mathbf P}^{n},\\[0.4pc]
 \langle\cdot,\cdot\rangle_{\mathfrak m} + r^{2}\langle\cdot,\cdot\rangle_{{\mathfrak k}_{\varepsilon}} + \frac{r^{2}}{4}\langle\cdot,\cdot\rangle_{{\mathfrak k}_{\varepsilon/2}}, & \mbox{if} & G/K =  {\mathbb C}{\mathbf P}^{n},\; {\mathbb H}{\mathbf P}^{n}\;\mbox{or}\; \;{\mathbb C}a{\mathbf P}^{n}.
 \end{array}
 \right.
\end{equation}
Hence $\widetilde{g}^{S}$ is an orthogonal metric, even in $T_{r}{\mathbb C}{\bf P}^{n}.$ Here $a_{0} = a_{\varepsilon} = 1$ and $b_{\varepsilon} = r^{2},$ and $A$ is the diagonal matrix $(1,1,\frac{1}{r}).$}
\end{remark}
If moreover ${\bf g}$ is contact metric then its characteristic vector field $\xi$ is a $G$-invariant field with $\xi_{o_{H}}\in \overline{\mathfrak n}.$ We say that a $G$-invariant contact metric structure $(\xi,{\bf g})$ on $T_{r}{\mathbb C}{\bf P}^{n}$ is of
\begin{enumerate}
\item[$\bullet$] Type AI, AII or AIII, if $\xi_{o_{H}} = \pm \xi_{1},$ $\pm \xi_{2}$ or $\pm\xi_{3},$
\item[$\bullet$] Type BI, BII or BIII, if $\xi_{o_{H}} = \cos\theta\xi_{i} + \sin\theta\xi_{j},$ for $(i,j) = (1,2),$ $(1,3)$ or $(2,3),$ where $\theta\in ]0,2\pi[\setminus\{\pi/2,\pi,3\pi/2\},$ 
\item[$\bullet$] Type C, if $\xi_{o_{H}} = \cos\theta\cos\phi \xi_{1}+\sin\theta\cos\phi\xi_{2}+\sin\phi\xi_{3},$ where $\theta\in ]0,2\pi [\setminus \{\pi/2,\pi,3\pi/2\}$ and $\phi\in ]-\pi/2,\pi/2[\setminus \{0\}.$
\end{enumerate}
 
 \begin{theorem}\label{maingeneralCP1} A $G$-invariant contact metric structure $(\xi,{\bf g})$ on $T_{r}{\mathbb C}{\bf P}^{n}\,(n\geq 1)$ is of 
 \begin{enumerate}
 \item[$\bullet$] {\rm Type AI} if and only if $\xi_{o_{H}} = \pm\frac{1}{\kappa}X$ and
 $$
 \begin{array}{l}
a_{0}= \kappa^{2},\quad a_{\varepsilon} = \frac{\kappa}{2q_{\varepsilon}},\quad b_{\varepsilon} = \frac{q_{\varepsilon}}{2\kappa}(4\alpha^{2} + \kappa^{2}),\\
c_{\varepsilon} = \alpha,\quad a_{0\varepsilon} = b_{0\varepsilon} = 0,\quad a_{\varepsilon/2} = \frac{\kappa}{4q_{\varepsilon/2}},\quad b_{\varepsilon/2} = \frac{\kappa q_{\varepsilon/2}}{4},
\end{array}
$$
for all $\kappa,q_{\varepsilon}, q_{\varepsilon/2}>0$ and $\alpha\in {\mathbb R};$
 \item[$\bullet$]{\rm Type AII} if and only if $\xi_{o_{H}} = \pm\frac{1}{\kappa}\mu_{\varepsilon}$ and
 $$
\begin{array}{l}
a_{0}= \frac{\kappa}{2q_{\varepsilon}},\quad a_{\varepsilon}= \kappa^{2},\quad b_{\varepsilon} = \frac{q_{\varepsilon}}{2\kappa}(4\alpha^{2} + \kappa^{2}),\\[0.4pc]
a_{0\varepsilon} = c_{\varepsilon}= 0,\quad b_{0\varepsilon} = \alpha, \quad a_{\varepsilon/2} = \frac{\kappa}{4q_{\varepsilon/2}},\quad b_{\varepsilon/2} = \frac{\kappa q_{\varepsilon/2}}{4},
 \end{array}
 $$ 
 for all $\kappa,q_{\varepsilon}, q_{\varepsilon/2}>0$ and $\alpha\in {\mathbb R};$
  \item[$\bullet$] {\rm Type AIII} if and only if $\xi = \pm\frac{1}{\kappa}\nu_{\varepsilon}$ and 
 $$
\begin{array}{l}
a_{0} = \frac{\kappa}{2q_{\varepsilon}},\quad a_{\varepsilon} = \frac{q_{\varepsilon}}{2\kappa}(4\alpha^{2} + \kappa^{2}),\quad b_{\varepsilon} = \kappa^{2},\\[0.4pc]
a_{0\varepsilon} = \alpha, \quad b_{0\varepsilon} = c_{\varepsilon}= 0,\quad a_{\varepsilon/2} = b_{\varepsilon/2} =\frac{\kappa}{4} ,
 \end{array}
 $$
 for all $\kappa, q_{\varepsilon}>0$ and $\alpha\in {\mathbb R};$ 
\item[$\bullet$]{\rm Type BI} if and only if $\xi_{o_{H}} = \frac{1}{\kappa}(\epsilon_{\cos\theta}\frac{\varrho}{q_{\varepsilon}}X + \sin\theta\mu_{\varepsilon}),$ where $\epsilon_{\cos\theta}$ denotes the sing of $\cos\theta$ and $\varrho = \sqrt{q_{\varepsilon}(1-q_{\varepsilon}\sin^{2}\theta)},$ and
$$
\begin{array}{l}
a_{0} = \frac{\kappa^{2}\varrho^{2}}{\cos^{2}\theta},\quad a_{\varepsilon} = \frac{\kappa^{2}}{\cos^{2}\theta}(\cos^{2}\theta + (1-q_{\varepsilon})^{2}\sin^{2}\theta),\quad b_{\varepsilon} = \frac{q_{\varepsilon}(\kappa^{4}\alpha^{2}\varrho^{2} + \sin^{2}\theta\cos^{2}\theta)}{4\varrho^{2}\sin^{2}\theta},\\[0.4pc]
a_{0\varepsilon} = -\epsilon_{\cos\theta}\frac{\kappa^{2}\varrho}{\cos^{2}\theta}(1-q_{\varepsilon})\sin\theta,\quad 
b_{0\varepsilon} = \epsilon_{\cos\theta}\frac{\alpha\kappa^{3}q_{\varepsilon}\varrho}{2\cos\theta},\quad
c_{\varepsilon} = -\frac{\alpha\kappa^{3}\varrho^{2}}{2\sin\theta\cos\theta}, \\[0.4pc]
 a_{\varepsilon/2} = \frac{\kappa\sqrt{q_{\varepsilon}}}{4q_{\varepsilon/2}},\quad b_{\varepsilon/2} = \frac{\kappa\sqrt{q_{\varepsilon}}q_{\varepsilon/2}}{4},
\end{array}
$$
for all $\kappa,q_{\varepsilon/2}>0,$ $q_{\varepsilon}\in]0,\frac{1}{\sin^{2}\theta}[$ and $\alpha\in {\mathbb R};$
\item[$\bullet$]{\rm Type BII} if and only if $\xi_{o_{H}} = \frac{q_{\varepsilon}}{2\kappa}\cos\theta X + \sin\theta(\alpha \mu_{\varepsilon} + \frac{1}{\kappa}\nu_{\varepsilon}),$ and 
$$
\begin{array}{l}
a_{0} = \frac{4\kappa^{2}q_{\varepsilon}^{2}}{\varrho^{2}},\quad a_{\varepsilon} = \frac{1}{q_{\varepsilon}^{2}\varrho^{2}}(\varrho^{2} + 16\alpha^{2}q_{\varepsilon}^{2}\kappa^{4}\tan^{2}\theta),\quad b_{\varepsilon} = \kappa^{2}(1 + \frac{(4-\varrho)^{2}}{\varrho^{2}}\tan^{2}\theta + \frac{\alpha^{2}}{q_{\varepsilon}^{2}}),\\[0.4pc]
a_{0\varepsilon} = \frac{8\alpha\kappa^{3}q_{\varepsilon}}{\varrho^{2}}\tan\theta,\quad b_{0\varepsilon} = \frac{2\kappa^{2}q_{\varepsilon}(4-\varrho)}{\varrho^{2}}\tan\theta,\quad c_{\varepsilon} = \frac{\kappa\alpha}{q_{\varepsilon}^{2}}(4\kappa^{2}q_{\varepsilon}^{2}(4-\varrho)\tan^{2}\theta -\varrho^{2}),\\[0.4pc]
a_{\varepsilon/2} = b_{\varepsilon/2} = \frac{\kappa\sqrt{\varrho}}{2\varrho},
\end{array}
$$
where $\varrho = q_{\varepsilon}^{2}\cos^{2}\theta + 4(\alpha^{2}\kappa^{2} + 1)\sin^{2}\theta,$ for all $\kappa,q_{\varepsilon}>0$ and $\alpha\in {\mathbb R}.$

\item[$\bullet$] {\rm Type BIII} if and only if $\xi_{o_{H}} = \frac{1}{\kappa q_{\varepsilon}}(\cos\theta\mu_{\varepsilon} + q_{\varepsilon}\sin\theta\nu_{\varepsilon}),$ and 
$$
\begin{array}{l}
a_{0} = \frac{q_{\varepsilon}^{2}}{4},\quad a_{\varepsilon} = \frac{\kappa^{2}q_{\varepsilon}^{2}}{\varrho^{2}}(1+ \alpha^{2}\varrho^{2}\tan^{2}\theta),\quad b_{\varepsilon} = \frac{\kappa^{2}}{\varrho^{2}}(\varrho^{2}\alpha^{2} + (\varrho^{2} + (1-q_{\varepsilon}^{2})^{2}\sin^{2}\theta\cos^{2}\theta)),\\[0.4pc]
a_{0\varepsilon} = -\frac{\alpha\kappa q_{\varepsilon}^{2}}{2}\tan\theta,\quad b_{0\varepsilon} = \frac{\alpha\kappa q_{\varepsilon}}{2}, \quad c_{\varepsilon} = -\frac{q_{\varepsilon}\kappa^{2}}{\varrho^{2}}(\alpha^{2}\varrho^{2}\tan\theta + (1-q_{\varepsilon}^{2})\sin\theta\cos\theta),\\[0.4pc]
a_{\varepsilon/2} = b_{\varepsilon/2} = \frac{\kappa q_{\varepsilon}\sqrt{\varrho}}{4\varrho},
\end{array}
$$
where $\varrho = \cos^{2}\theta + q_{\varepsilon}^{2}\sin^{2}\theta,$ for all $\kappa,q_{\varepsilon}>0$ and $\alpha\in {\mathbb R};$

\item[$\bullet$] {\rm Type C} if and only if
$$
\begin{array}{lcl}
 \xi_{o_{H}} & = & \frac{1}{q_{\varepsilon}\delta}(\delta\cos\phi(\cos\theta + q_{\varepsilon}\alpha\sin\theta) + q_{\varepsilon}(\alpha\gamma_{1}-\gamma_{2})\sin\phi)X \\[0.4pc]
 & & + (\delta\sin\theta\cos\phi + \gamma_{1})\mu_{\varepsilon} + \frac{1}{\kappa}\nu_{\varepsilon},\;\mbox{and}
 \end{array}
 $$
 $$
 \begin{array}{l}
 a_{0} = q_{\varepsilon}^{2},\quad a_{\varepsilon} = \frac{1+\alpha^{2}q_{\varepsilon}^{2}}{\delta^{2}},\quad b_{\varepsilon} = \frac{\kappa^{2}}{\delta^{2}}(\gamma_{1}^{2}+ q_{\varepsilon}^{2}\gamma_{2}^{2}+\delta^{2}),\\[0.4pc]
 a_{0\varepsilon} = -\frac{\alpha q_{\varepsilon}^{2}}{\delta},\quad b_{0\varepsilon} = \frac{\kappa q_{\varepsilon}^{2}\gamma_{2}}{\delta},\quad c_{\varepsilon} = -\frac{\kappa}{\delta^{2}}(\gamma_{1} + \alpha q_{\varepsilon}^{2}\gamma_{2}),\\[0.4pc]
 a_{\varepsilon/2}^{2} = b_{\varepsilon/2}^{2} = \frac{1}{16\delta^{2}}(q_{\varepsilon}^{2}\delta^{2}\cos\theta^{2}\cos^{2}\phi + \varrho),
 \end{array}
 $$ 
for all $\kappa,q_{\varepsilon}>0$ and $\alpha \in {\mathbb R}$ such that $\varrho\cos^{2}\theta\cos^{2}\phi\leq \kappa^{2},$ where $\varrho = \cos^{2}\phi(\sin\theta - q_{\varepsilon}\alpha\cos\theta)^{2} + 4q_{\varepsilon}^{2}\sin^{2}\phi,$ 
$$
 \begin{array}{lcl}
 \gamma_{1} & = & \frac{\cot\phi}{\kappa^{2}\beta}((1-\kappa^{2}\beta\delta)\sin\theta -q_{\varepsilon}\alpha\cos\theta),\\[0.4pc]
 \gamma_{2} & = & \frac{\cot\phi}{\kappa^{2}\beta q_{\varepsilon}}(q_{\varepsilon}\alpha\sin\theta + (\kappa^{2}\beta\delta -q_{\varepsilon}^{2}(\alpha^{2} + \delta^{2}))\cos\theta)
 \end{array}
 $$
 and $\delta$ is positive number such that $(q_{\varepsilon}\cos\theta\cos\phi)^{2}\delta^{2} -2\kappa^{2}\beta\delta + \varrho = 0,$ where $\beta^{2} = \frac{4q_{\varepsilon}^{2}}{\kappa^{2}}.$ 
   \end{enumerate}
 \end{theorem}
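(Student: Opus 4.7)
The plan is to reduce the problem to an algebraic classification on the $\mathrm{Ad}(H)$-invariant space $\overline{\mathfrak{n}}=\mathfrak{a}\oplus\mathfrak{m}_{\varepsilon}\oplus\mathfrak{k}_{\varepsilon}$ and then translate the contact condition $d\eta=\Phi$ into explicit equations in the metric parameters using the bracket structure collected in Lemma \ref{pbrack}. Since the characteristic field $\xi$ is $G$-invariant, $\xi_{o_{H}}\in \mathrm{Inv}(\overline{\mathfrak{m}})=\overline{\mathfrak{n}}$, so any candidate $\xi_{o_{H}}$ is determined by its coordinates in the $\prec\cdot,\cdot\succ$-orthonormal basis $\{\xi_{1},\xi_{2},\xi_{3}\}$. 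The seven types AI--C simply enumerate the possible orbit types of $\xi_{o_{H}}$ under the discrete symmetries available in the description of $A$ in \eqref{A}; the only freedom modulo this is the norm $\kappa=|\xi_{o_{H}}|_{\langle\cdot,\cdot\rangle}$ and (in types B and C) the angular parameters $\theta,\phi$.

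Next, I would translate the contact condition into bracket-theoretic data. For any $G$-invariant $1$-form $\eta$ and any $\mu_{1},\mu_{2}\in\overline{\mathfrak{m}}$, formulas \eqref{nablaPhi}, \eqref{varphi1}, \eqref{alpha} and \eqref{nabla} yield $2d\eta(\mu_{1}^{\tau},\mu_{2}^{\tau})_{o_{H}}=-\eta_{o_{H}}([\mu_{1},\mu_{2}]_{\overline{\mathfrak{m}}})=-\mathbf{g}_{o_{H}}(\xi_{o_{H}},[\mu_{1},\mu_{2}]_{\overline{\mathfrak{m}}})$. The requirement $d\eta=\Phi$ then reads
\begin{equation*}
-\mathbf{g}_{o_{H}}\bigl(\xi_{o_{H}},[\mu_{1},\mu_{2}]_{\overline{\mathfrak{m}}}\bigr)=2\mathbf{g}_{o_{H}}(\mu_{1},\varphi\mu_{2}),
\end{equation*}
from which $\varphi_{o_{H}}$ is uniquely recovered on $\xi_{o_{H}}^{\perp}$ by raising an index via $\mathbf{g}_{o_{H}}$, with $\varphi\xi=0$. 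The compatibility conditions $\varphi^{2}=-\mathrm{I}+\eta\otimes\xi$ and $\mathbf{g}(\varphi\cdot,\varphi\cdot)=\mathbf{g}-\eta\otimes\eta$ become, after this identification, a finite system of polynomial equations in the parameters $a_{0},a_{\varepsilon},b_{\varepsilon},a_{0\varepsilon},b_{0\varepsilon},c_{\varepsilon},a_{\varepsilon/2},b_{\varepsilon/2}$ and the coordinates of $\xi_{o_{H}}$; the bracket coefficients appearing in these equations are precisely those displayed in Lemma \ref{pbrack}.

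With this in hand the proof reduces to systematic case-by-case solution. I would first feed into the system the $\mathrm{Ad}(H)$-invariance (which, for $\mathbb{C}\mathbf{P}^{n}$, acts trivially on $\overline{\mathfrak{n}}$ and by a standard $\mathrm{U}(n-1)$-action on $\mathfrak{m}_{\varepsilon/2}\oplus\mathfrak{k}_{\varepsilon/2}$): this forces the $\overline{\mathfrak{n}}$--$(\mathfrak{m}_{\varepsilon/2}\oplus\mathfrak{k}_{\varepsilon/2})$ block of $\mathbf{g}$ to vanish and restricts the $\varepsilon/2$-block to the two parameters $a_{\varepsilon/2},b_{\varepsilon/2}$ plus possibly a cross term which must be killed by the same argument. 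I would then split off the two blocks: the $\varepsilon/2$-equations come from the brackets $[\mathfrak{m}_{\varepsilon/2},\mathfrak{m}_{\varepsilon/2}]$, $[\mathfrak{k}_{\varepsilon/2},\mathfrak{k}_{\varepsilon/2}]$, $[\mathfrak{m}_{\varepsilon/2},\mathfrak{k}_{\varepsilon/2}]$ and, after substitution of Lemma \ref{pbrack}, yield the expressions for $a_{\varepsilon/2}, b_{\varepsilon/2}$ in each type; the $\overline{\mathfrak{n}}$-equations come from $[\mathfrak{m}_{\varepsilon},\mathfrak{m}_{\varepsilon}]$, $[\mathfrak{m}_{\varepsilon},\mathfrak{k}_{\varepsilon}]$ and $[\mathfrak{k}_{\varepsilon},\mathfrak{k}_{\varepsilon}]$, and determine the six parameters $a_{0},a_{\varepsilon},b_{\varepsilon},a_{0\varepsilon},b_{0\varepsilon},c_{\varepsilon}$ in terms of $\kappa,q_{\varepsilon},\alpha$ and the angles.

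The bulk of the routine calculation is thus standard linear algebra; I would start with types AI--AIII, where two of the three coordinates of $\xi_{o_{H}}$ vanish, so most cross terms drop out and the system decouples, giving the $\alpha$-family of metrics displayed. Types BI--BIII then amount to repeating the argument after a planar rotation, and the recorded formulas follow directly from the compatibility of $\varphi$ with the two-dimensional subspace containing $\xi_{o_{H}}$. The main obstacle will be Type C, where all coordinates of $\xi_{o_{H}}$ are non-zero and all six parameters of the $\overline{\mathfrak{n}}$-block couple: the $\varphi^{2}$ equation on $\overline{\mathfrak{n}}\cap\xi^{\perp}$ produces, after elimination, the quadratic $(q_{\varepsilon}\cos\theta\cos\phi)^{2}\delta^{2}-2\kappa^{2}\beta\delta+\varrho=0$ in the statement, and the bookkeeping needed to exhibit the auxiliary quantities $\gamma_{1},\gamma_{2}$ and the feasibility range $\varrho\cos^{2}\theta\cos^{2}\phi\le\kappa^{2}$ is the only delicate part; both the existence and uniqueness of the resulting metric follow once this quadratic is solved. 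Finally, a direct check using Lemma \ref{lfirst} shows that each tuple listed produces an honest $G$-invariant contact metric structure, completing the classification.
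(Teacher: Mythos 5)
Your proposal is correct and follows essentially the same route as the paper: restrict $\xi_{o_{H}}$ to ${\rm Inv}(\overline{\mathfrak m})=\overline{\mathfrak n}$, recover $\varphi$ from $2d\eta(u,v)=-\eta([u,v]_{\overline{\mathfrak m}})$ and $d\eta={\bf g}(\cdot,\varphi\cdot)$, impose $\varphi^{2}=-{\rm I}+\eta\otimes\xi$ to get a polynomial system in the metric parameters, and solve it type by type (the paper's system (\ref{e2})--(\ref{esupletoria}), culminating in the same quadratic in $\delta$ for Type C). The only caveat is that Lemma \ref{pbrack} alone does not supply all the bracket values you need for the $\varepsilon/2$-block (it gives only relations among, and partial components of, those brackets); the paper instead uses the explicit $\mathfrak{su}(n+1)$ multiplication table (\ref{brackTCP1}), which your computation would have to reproduce.
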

 
 When ${\bf g}$ is an orthogonal metric, $(\xi,{\bf g})$ is said to be an {\em orthogonal contact metric structure}.
 
 \begin{corollary}\label{cCP} Any $G$-invariant $K$-contact structure $(\xi,{\bf g})$ on $T_{r}{\mathbb C}{\bf P}^{n}$ $(n\geq 1)$ of {\rm Type A} is orthogonal.
 \end{corollary}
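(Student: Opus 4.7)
The plan is to apply the Killing-vector-field criterion for $G$-invariant fields stated in equation~(\ref{Killing}) of Section~3: a $G$-invariant vector field $U$ on $G/H$ with $\mu := U_{o_H} \in \overline{\mathfrak m}$ is Killing precisely when $\mathfrak{U}(\mu_1,\mu_2) \in \mu^\perp$ for all $\mu_1,\mu_2 \in \overline{\mathfrak m}$. Using the defining formula~(\ref{U}) for $\mathfrak U$ with $\mu_3 = \mu$, this is equivalent, by polarization, to
$$
\mathbf{g}\bigl([\mu,\mu_1]_{\overline{\mathfrak m}},\mu_1\bigr)=0 \qquad\text{for every } \mu_1\in \overline{\mathfrak m}.
$$
It suffices to verify this identity against a single well-chosen test vector $\mu_1$ in each of the three subtypes AI, AII, AIII to extract the necessary vanishing of the parameter $\alpha$ appearing in Theorem~\ref{maingeneralCP1}.

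The key bracket inputs come from (\ref{eq.ms4.3}) with $\varepsilon_{\mathbb R}(X)=1$, giving $[X,\mu_\varepsilon]=-\nu_\varepsilon$ and $[X,\nu_\varepsilon]=\mu_\varepsilon$. Both outputs sit in $\overline{\mathfrak m}$ (the first in $\mathfrak k_\varepsilon$, the second in $\mathfrak m_\varepsilon$), so no $\mathfrak h$-projection arises. For Type~AI, with $\mu=\pm X/\kappa$, choosing $\mu_1=\mu_\varepsilon$ yields $\mathbf{g}([\mu,\mu_\varepsilon],\mu_\varepsilon)= \mp c_\varepsilon/\kappa$, so $c_\varepsilon=\alpha=0$. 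For Type~AII, with $\mu=\pm\mu_\varepsilon/\kappa$, the choice $\mu_1=X$ gives $\pm b_{0\varepsilon}/\kappa$, so $b_{0\varepsilon}=\alpha=0$. For Type~AIII, with $\mu=\pm\nu_\varepsilon/\kappa$ and the same $\mu_1=X$, one obtains $\mp a_{0\varepsilon}/\kappa$, so $a_{0\varepsilon}=\alpha=0$.

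In each Type~A case, the explicit presentation in Theorem~\ref{maingeneralCP1} already forces two of the three off-diagonal coefficients $a_{0\varepsilon}, b_{0\varepsilon}, c_\varepsilon$ to vanish, and the Killing condition eliminates the remaining one, which is exactly the free parameter $\alpha$. Hence $a_{0\varepsilon}=b_{0\varepsilon}=c_\varepsilon=0$, which means that the matrix $A$ of (\ref{A}) is diagonal and, per the definition preceding (\ref{BbCP}), the metric $\mathbf{g}$ is orthogonal.

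There is no genuine obstacle: the argument reduces to testing the Killing identity from Section~3 against one vector in $\overline{\mathfrak m}$ per subtype and reading off cross terms from Theorem~\ref{maingeneralCP1}. The only point requiring care is verifying that the brackets $[X,\mu_\varepsilon]$ and $[X,\nu_\varepsilon]$ already live in $\overline{\mathfrak m}$, which is immediate from Lemma~\ref{lbrack1}, so the $\mathfrak h$-component never contributes.
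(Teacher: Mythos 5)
Your proof is correct and follows essentially the same route as the paper: both apply the Killing criterion (\ref{Killing}) via (\ref{U}) to obtain $\mathbf{g}([\xi,\mu_{1}]_{\overline{\mathfrak m}},\mu_{2})+\mathbf{g}([\xi,\mu_{2}]_{\overline{\mathfrak m}},\mu_{1})=0$, test it with $\mu_{\varepsilon}$ for Type AI and with $X$ for Types AII and AIII, and conclude $\alpha=0$ from the parametrization in Theorem \ref{maingeneralCP1}. The only (harmless) cosmetic difference is that you state the condition in polarized form $\mathbf{g}([\xi,\mu_{1}]_{\overline{\mathfrak m}},\mu_{1})=0$, which is equivalent.
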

 \begin{proof} Because $\xi$ is Killing, it follows from (\ref{U}) and (\ref{Killing}) that
 \begin{equation}\label{K1}
 {\bf g}([\xi,\mu_{1}]_{\overline{\mathfrak m}},\mu_{2}) + {\bf g}([\xi,\mu_{2}]_{\overline{\mathfrak m}},\mu_{1}) = 0,\quad \mu_{1},\mu_{2}\in \overline{\mathfrak m}.
  \end{equation}
Now, using (\ref{eq.ms4.3}) and Theorem \ref{maingeneralCP1} and putting in (\ref{K1}) $\mu_{1} = \mu_{2} = \mu_{\varepsilon},$ if $(\xi,{\bf g})$ is of Type AI, and $\mu_{1} = \mu_{2} = X,$ if $(\xi,{\bf g})$ is of Type AII or AIII, we get $\alpha = 0.$ Then, from Theorem \ref{maingeneralCP1}, $(\xi,{\bf g})$ is orthogonal.
 \end{proof}

 \begin{theorem}\label{maingeneral} All orthogonal contact metric structures on $T_{r}{\mathbb C}{\bf P}^{n}$ and those that are also $K$-contact are described in {\rm Table II}. Moreover, each orthogonal contact metric structure of Type {\rm AI}, determined by a triple of positive constants $(\kappa,q_{\varepsilon},q_{\varepsilon/2}),$ is isomorphic to the corresponding of Type {\rm AII}.   
  
  
  \bigskip
\begin{tabular}{llllllll}
\multicolumn{8}{c}{\rm Table II. Orthogonal contact metric structures $(\xi,{\bf g}^{a_{0},a_{\varepsilon},b_{\varepsilon},a_{\varepsilon/2},b_{\varepsilon/2}})$ on $T_{r}{\mathbb C}{\bf P}^{n}$}\\
\hline\noalign{\smallskip}
{\rm Type} & $\xi_{o_{H}}$ & $a_{0}$
& $a_{\varepsilon}$
& $b_\varepsilon$
& $a_{\varepsilon/{\scriptscriptstyle 2}}$ & $b_{\varepsilon/{\scriptscriptstyle 2}}$ & $K$-{\rm contact}\\
\noalign{\smallskip}\hline\noalign{\smallskip}
{\rm AI}& ${\scriptstyle \pm\frac{1}{\kappa}X}$
& $\kappa^{2}$ & $\frac{\kappa}{2q_{\varepsilon}}$
& $\frac{\kappa q_{\varepsilon}}{2}$ & $\frac{\kappa}{4q_{\varepsilon/2}}$ & $\frac{\kappa q_{\varepsilon/2}}{4}$ & $\scriptstyle{q_{\varepsilon} = q_{\varepsilon/2} = 1}$ \\ \smallskip
{\rm AII} & ${\scriptstyle \pm\frac{1}{\kappa}\mu_{\varepsilon}}$
& $\frac{\kappa}{2q_{\varepsilon}}$ & $\kappa^{2}$
& $\frac{\kappa q_{\varepsilon}}{2}$ & $\frac{\kappa}{4q_{\varepsilon/2}}$ & $\frac{\kappa q_{\varepsilon/2}}{4}$  & $\scriptstyle{q_{\varepsilon} = q_{\varepsilon/2} = 1}$ \\
\smallskip
{\rm AIII} & ${\scriptstyle \pm\frac{1}{\kappa}\nu_{\varepsilon}}$
& $\frac{\kappa}{2q_{\varepsilon}}$& $\frac{\kappa q_{\varepsilon}}{2}$ & $\kappa^{2}$
& $\frac{\kappa}{4}$ & $\frac{\kappa}{4}$ & $\scriptstyle{q_{\varepsilon} = 1}$\\ \smallskip
{\rm BI} & ${\scriptstyle \frac{1}{\kappa}(\cos\theta X + \sin\theta \mu_{\varepsilon})}$ & $\kappa^{2}$ & $\kappa^{2}$ & $\frac{1}{4}$ & $\frac{\kappa}{4q_{\varepsilon/2}}$ & $\frac{\kappa q_{\varepsilon/2}}{4}$ & $\scriptstyle{\kappa = \frac{1}{2},\;q_{\varepsilon/2} = 1}$\\ \smallskip
{\rm BII} & ${\scriptstyle \frac{1}{\kappa} (\cos\theta X + \sin\theta \nu_{\varepsilon})}$ & $\kappa^{2}$ & $\frac{1}{4}$ & $\kappa^{2}$ & $\frac{\kappa}{4}$ & $\frac{\kappa}{4}$ & $\scriptstyle{\kappa = \frac{1}{2}}$\\ \smallskip
{\rm BIII} & ${\scriptstyle \frac{1}{\kappa}(\cos\theta \mu_{\varepsilon} + \sin\theta \nu_{\varepsilon})}$ & $\frac{1}{4}$ & $\kappa^{2}$ & $\kappa^{2}$ & $\frac{\kappa}{4}$ & $\frac{\kappa}{4}$ & $\scriptstyle{\kappa = \frac{1}{2}}$ \\ \smallskip
{\rm C} & ${\scriptstyle 2(\cos\theta\cos\phi X+\sin\theta\cos\phi\mu_{\varepsilon}+\sin\phi\nu_{\varepsilon})}$ & $\frac{1}{4}$ & $\frac{1}{4}$ & $\frac{1}{4}$ & $\frac{1}{8}$ & $\frac{1}{8}$ & $\scriptstyle{\rm Yes}$\\
\hline
\end{tabular}

\noindent \hspace{0.3cm} ${\scriptstyle \theta\in ]0,2\pi [\setminus \{\pi/2,\pi,3\pi/2\},\;\phi\in ]-\pi/2,\pi/2[\setminus \{0\}}$
\end{theorem}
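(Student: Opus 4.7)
The starting point is Theorem \ref{maingeneralCP1}, which gives explicit formulas for all seven types of $G$-invariant contact metric structures on $T_{r}{\mathbb C}{\bf P}^{n}.$ The first step is to impose on each family the orthogonality conditions $a_{0\varepsilon} = b_{0\varepsilon} = c_{\varepsilon} = 0$ and read off the resulting parameter constraints. For Types AI, AII, AIII one forces $\alpha = 0$ in each case; for the B-types, inspection of the $c_{\varepsilon}$-expression shows $\alpha = 0,$ and then the off-diagonal entries $a_{0\varepsilon},b_{0\varepsilon}$ force a specific relation between $\kappa$ and the $q_{\lambda}$'s (for instance, in BI one gets $q_{\varepsilon}=1,$ etc.), collapsing the parameter space to a single radius $\kappa>0$ together with the angle $\theta;$ the argument is analogous, but more involved, for Type C, where vanishing of all three off-diagonal coefficients simultaneously fixes $\kappa,q_{\varepsilon},\alpha$ in terms of universal constants and gives the Type-C row of Table II.

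Next I characterize the $K$-contact ones. By the argument of Corollary \ref{cCP} (which uses the symmetry identity (\ref{K1})), a $G$-invariant $(\xi,{\bf g})$ is $K$-contact if and only if ${\bf g}([\xi,\mu_{1}]_{\overline{\mathfrak m}},\mu_{2}) + {\bf g}([\xi,\mu_{2}]_{\overline{\mathfrak m}},\mu_{1}) = 0$ for all $\mu_{1},\mu_{2}\in\overline{\mathfrak m}.$ In each type I apply this with pairs taken successively from $\{X,\mu_{\varepsilon},\nu_{\varepsilon}\}$ and from the orthonormal bases of ${\mathfrak m}_{\varepsilon/2}\oplus {\mathfrak k}_{\varepsilon/2}$ defined via (\ref{eq.ms4.3}). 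The relevant brackets are those recorded in Lemma \ref{pbrack}, so the resulting linear equations read as simple ratios between $a_{\lambda}$ and $b_{\lambda}.$ For example, in Type AI the pair $(\mu_{\varepsilon},\nu_{\varepsilon})$ gives $a_{\varepsilon}=b_{\varepsilon},$ forcing $q_{\varepsilon}=1,$ and the pair $(\mu^{p}_{\varepsilon/2},\nu^{p}_{\varepsilon/2})$ analogously gives $q_{\varepsilon/2}=1.$ Running through all seven types this way produces the last column of Table II.

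Finally, I prove the isomorphism between the orthogonal Type AI and Type AII structures parametrized by the same triple $(\kappa,q_{\varepsilon},q_{\varepsilon/2})$ by invoking Lemma \ref{lfirst}. I define a linear map $L\colon\overline{\mathfrak m}\to\overline{\mathfrak m}$ by
\[
L(X)=\mu_{\varepsilon},\quad L(\mu_{\varepsilon})=X,\quad L(\nu_{\varepsilon})=-\nu_{\varepsilon},
\]
and extend it on ${\mathfrak m}_{\varepsilon/2}\oplus{\mathfrak k}_{\varepsilon/2}$ in an orientation-reversing way consistent with Lemma \ref{pbrack} (explicitly by swapping $\mu^{p}_{\varepsilon/2}\leftrightarrow \nu^{p}_{\varepsilon/2}$ up to sign so as to match the relations $[\mu^{p}_{\varepsilon/2},\nu^{q}_{\varepsilon/2}]_{{\mathfrak a}}=-\tfrac{\delta_{pq}}{2}X$). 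Since $a_{\varepsilon/2},b_{\varepsilon/2}$ and the cross term $\frac{\kappa q_{\varepsilon}}{2}$ in ${\mathfrak k}_{\varepsilon}$ are identical for the two types, a direct check using the tables shows that $L$ is a linear isometry between $({\mathfrak m},{\bf g}^{AI}_{o_{H}})$ and $({\mathfrak m},{\bf g}^{AII}_{o_{H}})$ and sends the characteristic vector $\frac{1}{\kappa}X$ to $\frac{1}{\kappa}\mu_{\varepsilon}.$ Conditions (i), (ii) in Lemma \ref{lfirst} then reduce to verifying that $L$ respects the bracket relations of Lemma \ref{pbrack}, which follows from the definition of $L$ on $\overline{\mathfrak n}$ together with the compatible choice on ${\mathfrak m}_{\varepsilon/2}\oplus{\mathfrak k}_{\varepsilon/2}.$ Condition (iii) is built into the definition of $L$ and the construction of $\varphi$ from $(\xi,{\bf g}).$

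The main obstacle, and the place where most of the work lives, is bookkeeping: the formulas in Theorem \ref{maingeneralCP1} for Types B and C involve several parameters tied together by auxiliary quantities such as $\varrho,\delta,\gamma_{i},$ so checking that the three orthogonality equations cut down the parameter space to precisely the rows of Table II requires careful elimination. The $K$-contact computations and the construction of the isometry $L,$ by contrast, are mechanical once the bracket table of Lemma \ref{pbrack} is in hand.
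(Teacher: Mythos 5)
The first two parts of your plan coincide with the paper's proof: the orthogonality conditions $a_{0\varepsilon}=b_{0\varepsilon}=c_{\varepsilon}=0$ are imposed on each row of Theorem \ref{maingeneralCP1} exactly as you describe, and your Killing criterion (\ref{K1}) is equivalent to the paper's use of ${\mathfrak U}$ and (\ref{Killing}) (the paper tabulates all components of ${\mathfrak U}$ in (\ref{UCP}) and reads off the last column of Table II from them). Your sample computation for Type AI, giving $a_{\varepsilon}=b_{\varepsilon}$ from the pair $(\mu_{\varepsilon},\nu_{\varepsilon})$ and hence $q_{\varepsilon}=1$, is correct.

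The third part, however, contains a genuine error. Your extension of $L$ to ${\mathfrak m}_{\varepsilon/2}\oplus{\mathfrak k}_{\varepsilon/2}$ by swapping $\mu^{p}_{\varepsilon/2}\leftrightarrow\pm\nu^{p}_{\varepsilon/2}$ cannot satisfy condition (i) of Lemma \ref{lfirst}, and no choice of signs repairs it. Indeed, from (\ref{brackTCP1}) one has $[\mu^{j,0}_{\varepsilon/2},\nu^{j,0}_{\varepsilon/2}]=-\tfrac{1}{2}X$, so with $LX=\mu_{\varepsilon}$ condition (i) forces $[L\mu^{j,0}_{\varepsilon/2},L\nu^{j,0}_{\varepsilon/2}]_{\overline{\mathfrak m}}=-\tfrac{1}{2}\mu_{\varepsilon}\in{\mathfrak m}_{\varepsilon}$; but for any signs $\epsilon_{1},\epsilon_{2}$ the swap gives $[\epsilon_{1}\nu^{j,0}_{\varepsilon/2},\epsilon_{2}\mu^{j,0}_{\varepsilon/2}]=\epsilon_{1}\epsilon_{2}\tfrac{1}{2}X\in{\mathfrak a}$, which lies in the wrong irreducible summand. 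The map must instead act \emph{within} ${\mathfrak m}_{\varepsilon/2}$ and within ${\mathfrak k}_{\varepsilon/2}$, shifting the second superscript: the paper takes $L\mu^{j,a}_{\varepsilon/2}=(-1)^{a}\mu^{j,a+1}_{\varepsilon/2}$ and $L\nu^{j,a}_{\varepsilon/2}=\nu^{j,a}_{\varepsilon/2}$ (together with $LX=\mu_{\varepsilon}$, $L\mu_{\varepsilon}=-X$, $L\nu_{\varepsilon}=\nu_{\varepsilon}$), which converts the ${\mathfrak a}$-valued brackets $[\mu^{j,a},\nu^{j,a}]$ into the ${\mathfrak m}_{\varepsilon}$-valued ones $[\mu^{j,a+1},\nu^{j,a}]$ as required. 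Two further points are glossed over: condition (ii) of Lemma \ref{lfirst} is not a consequence of Lemma \ref{pbrack} alone — it requires identifying ${\mathfrak h}$ as the span of the ${\mathfrak h}$-components $[u,v]_{\mathfrak h}$ (Lemma \ref{lh}) and checking equivariance of $L$ under ${\rm ad}_{\mathfrak h}$ (Lemma \ref{hbrackets}) — and the simple connectedness of $T_{r}{\mathbb C}{\bf P}^{n}$, which is a hypothesis of Lemma \ref{lfirst}, must be verified.
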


\begin{remark}\label{r1}{\rm The tangent sphere bundles $T_{r}{\mathbb S}^{2}$ and $T_{r}{\mathbb R}{\bf P}^{2}$ are Lie groups naturally isomorphic to ${\rm SO}(3)\cong {\mathbb R}{\bf P}^{3}$ and ${\rm SO}(3)/\pm{\rm I}_{2},$ respectively. Since ${\mathbb S}^{2}\cong {\mathbb C}{\bf P}^{1}= {\rm SU}(2)/{\rm S}({\rm U}(1)\times{\rm U}(1)),$ invariant vector fields of $T_{r}{\mathbb S}^{2}$ and $T_{r}{\mathbb R}{\bf P}^{2}$ can be viewed as vectors of the Lie algebra ${\mathfrak s}{\mathfrak u}(2)\cong {\mathfrak s}{\mathfrak o}(3)$ and therefore, the set of their invariant contact metric structures are described in Theorem \ref{maingeneralCP1} for $n = 1,$ where ${\mathfrak m}_{\varepsilon/2} = {\mathfrak k}_{\varepsilon/2} = 0.$ }
 
 {\rm The $G$-invariant contact metric ${\bf g}$ on $T_{r}{\mathbb C}{\bf P}^{n} = G/H,$ $n\geq 1,$ such that ${\bf g}_{o_{H}} = \frac{1}{4}\langle\cdot,\cdot\rangle_{\overline{\mathfrak n}}+ \frac{1}{8}\langle\cdot,\cdot\rangle_{{\mathfrak m}_{\varepsilon/2}\oplus{\mathfrak k}_{\varepsilon/2}}$ defines $K$-contact metric structures of all types in Table II. Therefore the set of its associated (Killing) characteristic vector fields consists of all $G$-invariant vector fields $\xi$ such that $\xi_{o_{H}}$ belongs to the $2$-sphere ${\mathcal S}_{\overline{\mathfrak n}}(2)$ of radius $2$ in the Euclidean space $(\overline{\mathfrak n},\langle\cdot,\cdot\rangle).$ In Theorem \ref{t1} we prove that ${\bf g}$ is indeed a $3$-Sasakian metric.}
\end{remark}
From Theorems \ref{maingeneralCP1} and \ref{maingeneral} and Corollary \ref{cCP}, we have the following.
  \begin{corollary}\label{mainrank1} Any $G$-invariant Riemannian metric ${\bf g}$ on the tangent sphere bundle $T_{r}{\mathbb C}{\bf P}^{n}$ $(n\geq 1)$ such that the pair $(\xi = \frac{1}{\kappa}\xi^{S},{\bf g})$ is a contact metric structure for some constant $\kappa>0,$ is determined by
 \begin{equation}\label{Type A}
 \begin{array}{l}
a_{0}= \kappa^{2},\quad a_{\varepsilon} = \frac{\kappa}{2q_{\varepsilon}},\quad b_{\varepsilon} = \frac{q_{\varepsilon}}{2\kappa}(4\alpha^{2} + \kappa^{2}),\\
c_{\varepsilon} = \alpha,\quad a_{0\varepsilon} = b_{0\varepsilon} = 0,\quad a_{\varepsilon/2} = \frac{\kappa}{4q_{\varepsilon/2}},\quad b_{\varepsilon/2} = \frac{\kappa q_{\varepsilon/2}}{4},
\end{array}
\end{equation}
for some positive constants $q_{\varepsilon},$ $q_{\varepsilon/2}$ and $\alpha\in {\mathbb R}.$ Moreover, $(\xi,{\bf g})$ is $K$-contact, indeed Sasakian, if and only if $q_{\varepsilon} = q_{\varepsilon/2} = 1$ and $\alpha = 0,$ or equivalently 
 \[
 {\bf g}_{o_{H}} = \kappa^{2}\langle\cdot,\cdot\rangle_{\mathfrak a} + \frac{\kappa}{2}\langle\cdot,\cdot\rangle_{{\mathfrak m}_{\varepsilon}\oplus{\mathfrak k}_{\varepsilon}}+\frac{\kappa}{4}\langle\cdot,\cdot\rangle_{{\mathfrak m}_{\varepsilon/2}\oplus{\mathfrak k}_{\varepsilon/2}}.
 \]
 \end{corollary}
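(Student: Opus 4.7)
The plan is to derive this corollary as a direct consequence of the three immediately preceding results: Theorem \ref{maingeneralCP1}, Corollary \ref{cCP}, and Theorem \ref{maingeneral}. The heuristic is that the hypothesis $\xi = \frac{1}{\kappa}\xi^{S}$ pins down the characteristic vector field to be exactly a positive multiple of $X$ at the origin, which forces the structure into a single row of the big classification.

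First, I would note that by the definition of $\xi^{S}$ recalled in the introduction, $\xi^{S}_{o_{H}} = X$, so the assumption $\xi = \frac{1}{\kappa}\xi^{S}$ (with $\kappa>0$) is equivalent to $\xi_{o_{H}} = \frac{1}{\kappa}X$. This is precisely the Type AI condition in Theorem \ref{maingeneralCP1} (with the $+$ sign). Reading off the Type AI row of that theorem then produces exactly the formulas (\ref{Type A}) together with their free parameters $q_{\varepsilon},q_{\varepsilon/2}>0$ and $\alpha\in{\mathbb R}$, which establishes the first assertion.

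For the $K$-contact characterization, I would invoke Corollary \ref{cCP}, which asserts that every $G$-invariant $K$-contact structure of Type A on $T_{r}{\mathbb C}{\bf P}^{n}$ is orthogonal. Inspecting the Type AI formulas in (\ref{Type A}), the only off-diagonal coefficient that is not automatically zero is $c_{\varepsilon}=\alpha$; hence orthogonality is equivalent to $\alpha=0$. Setting $\alpha=0$ and comparing with the Type AI row of Table II in Theorem \ref{maingeneral}, the $K$-contact condition translates into $q_{\varepsilon}=q_{\varepsilon/2}=1$. Substituting these three values into (\ref{Type A}) collapses the metric at the origin to $\kappa^{2}\langle\cdot,\cdot\rangle_{\mathfrak a} + \frac{\kappa}{2}\langle\cdot,\cdot\rangle_{{\mathfrak m}_{\varepsilon}\oplus{\mathfrak k}_{\varepsilon}} + \frac{\kappa}{4}\langle\cdot,\cdot\rangle_{{\mathfrak m}_{\varepsilon/2}\oplus{\mathfrak k}_{\varepsilon/2}}$, as displayed. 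The fact that $K$-contact is actually equivalent to Sasakian here is the content of Theorem \ref{t1}, quoted in the introduction.

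There is really no substantive obstacle here: the proof is pure bookkeeping, consisting of matching the Type AI entry across Theorem \ref{maingeneralCP1} and Table II and tracking which components of the ${\rm Ad}(H)$-irreducible decomposition $\overline{\mathfrak m} = {\mathfrak a}\oplus{\mathfrak m}_{\varepsilon}\oplus{\mathfrak k}_{\varepsilon}\oplus{\mathfrak m}_{\varepsilon/2}\oplus{\mathfrak k}_{\varepsilon/2}$ are constrained by the orthogonality and $K$-contact conditions. The genuine mathematical content lives in Theorem \ref{maingeneralCP1}, Corollary \ref{cCP} and Theorem \ref{maingeneral}; the present corollary merely records their combined output in the most commonly used special case where $\xi$ is aligned with the standard vector field.
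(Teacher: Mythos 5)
Your proposal is correct and follows essentially the same route as the paper, which derives this corollary directly from Theorem \ref{maingeneralCP1} (Type AI row), Corollary \ref{cCP} (forcing $\alpha=0$ for $K$-contact) and Table II of Theorem \ref{maingeneral}. The paper offers no further argument beyond this bookkeeping, so your reconstruction matches it.
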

\begin{remark}{\rm Theorem 6.1 in \cite{JC1} must include the non-orthogonal contact metrics in $T_{r}{\mathbb C}{\bf P}^{n},$ described in (\ref{Type A}) for $\alpha\neq 0.$
}
\end{remark}

Before proceeding to the proof of Theorems \ref{maingeneralCP1} and \ref{maingeneral}, we first analyze, as a consequence of these theorems, the contact character of the induced Sasaki metric $\widetilde{g}^{S}$ on $T_{r}(G/K).$ 

For any Riemannian manifold $(M,g),$ the pair $(\frac{1}{2}\xi^{S},\frac{1}{4}\widetilde{g}^{S})$ is a contact metric structure on $T_{1}M$ (see for example \cite[Ch. 9]{Bl}) and Tashiro \cite{Tash} proved that it is $K$-contact, indeed Sasakian, if and only if $(M,g)$ has constant sectional curvature $1.$ Now we have the following version of Tashiro's Theorem for tangent sphere bundles $T_{r}(G/K)$ of any radius $r>0.$ The proof follows comparing expressions (\ref{gc}) and (\ref{gsasaki}) and using Theorems \ref{maingeneral1} and \ref{maingeneral}.
\begin{corollary} The induced Sasaki metric $\widetilde{g}^{S}$ on $T_{r}(G/K)$ is contact if and only if $r = \frac{1}{2},$ but it is not $K$-contact. On $T_{{\scriptstyle 1/2}}{\mathbb C}{\bf P}^{n}$ $(n\geq 1),$ $\widetilde{g}^{S}$ defines contact metric structures of Type {\rm AI} and {\rm BI}.

 For all $r>0,$ $\frac{1}{4r^{2}}\widetilde{g}^{S}$ is a contact metric on $T_{r}(G/K)$ and it is $K$-contact if and only if $r = 1$ and $G/K = {\mathbb S}^{n}$ or ${\mathbb R}{\bf P}^{n}.$ Then, $\frac{1}{4}\widetilde{g}^{S}$ is a Sasakian metric. On $T_{r}{\mathbb C}{\bf P}^{n},$ $\frac{1}{4r^{2}}\widetilde{g}^{S}$ defines contact metric structures of Type {\rm AI}, {\rm AII} and {\rm BI}.
 \end{corollary}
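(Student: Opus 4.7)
The plan is to match, term by term, the coefficients of the induced Sasaki metric against the general forms of the invariant orthogonal contact metrics obtained earlier: first against (\ref{gc}) of Theorem~\ref{maingeneral1} for the general compact rank-one case, and then against the rows of Table~II for the complex projective case. Reading off (\ref{gsasaki}), $\widetilde{g}^{S}$ is the metric ${\bf g}^{a_{0},a_{\varepsilon},b_{\varepsilon},a_{\varepsilon/2},b_{\varepsilon/2}}$ with $a_{0}=a_{\varepsilon}=1$, $b_{\varepsilon}=r^{2}$, and, whenever $m_{\varepsilon/2}\neq 0$, $a_{\varepsilon/2}=1$, $b_{\varepsilon/2}=r^{2}/4$. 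Everything then reduces to an algebraic identification.

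For the opening assertion, I would impose (\ref{gc}): $\kappa^{2}=a_{0}=1$ forces $\kappa=1$; $\kappa/(2q_{\varepsilon})=a_{\varepsilon}=1$ forces $q_{\varepsilon}=1/2$; and $\kappa q_{\varepsilon}/2=b_{\varepsilon}=r^{2}$ forces $r=1/2$. When present, the $\varepsilon/2$-identities determine $q_{\varepsilon/2}=1/4$ and are automatically consistent with $r=1/2$. Because $q_{\varepsilon}=1/2\neq 1$, the $K$-contact clause of Theorem~\ref{maingeneral1} is violated, so $\widetilde{g}^{S}$ cannot be $K$-contact. For the rescaled metric $\tfrac{1}{4r^{2}}\widetilde{g}^{S}$, whose coefficient tuple becomes $(1/(4r^{2}),1/(4r^{2}),1/4,1/(4r^{2}),1/16)$, the same matching yields $\kappa=1/(2r)$, $q_{\varepsilon}=r$ and $q_{\varepsilon/2}=r/2$, so (\ref{gc}) holds for every $r>0$; the $K$-contact condition $q_{\lambda}=1$ then forces $r=1$ together with the vanishing of the $\varepsilon/2$-block, i.e.\ $G/K\in\{\mathbb{S}^{n},\mathbb{R}{\bf P}^{n}\}$. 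The promotion of this $K$-contact structure to a Sasakian one on $T_{1}\mathbb{S}^{n}$ and $T_{1}\mathbb{R}{\bf P}^{n}$ is then supplied by the general statement of Theorem~\ref{t1}.

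For the two assertions on $T_{r}\mathbb{C}{\bf P}^{n}$, I would run the same matching against each row of Table~II. At $r=1/2$, the Sasaki tuple $(1,1,1/4,1,1/16)$ solves the Type AI prescription (with $\kappa=1$, $q_{\varepsilon}=1/2$, $q_{\varepsilon/2}=1/4$) and the Type BI prescription (with $\kappa=1$, $q_{\varepsilon/2}=1/4$ and any admissible $\theta$), whereas Types AIII, BII, BIII and C each impose a rigid equality among $\{a_{0},a_{\varepsilon},b_{\varepsilon},a_{\varepsilon/2},b_{\varepsilon/2}\}$ such as $b_{\varepsilon}=\kappa^{2}$, $a_{\varepsilon}=1/4$, $a_{0}=1/4$ or $a_{0}=a_{\varepsilon}=b_{\varepsilon}=1/4$ that conflicts with those coefficients. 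For $\tfrac{1}{4r^{2}}\widetilde{g}^{S}$ the coefficient pattern $(1/(4r^{2}),1/(4r^{2}),1/4,1/(4r^{2}),1/16)$ additionally accommodates a Type AII characteristic field, because the AII prescription $a_{0}=\kappa/(2q_{\varepsilon})$, $a_{\varepsilon}=\kappa^{2}$ becomes compatible with the equality $a_{0}=a_{\varepsilon}$ once $\kappa=1/(2r)$ and $q_{\varepsilon}=r$, and the same rigidity rules out the remaining types. The only mild obstacle will be the careful bookkeeping of the $\varepsilon/2$-block, which simultaneously drives the case distinction among types and the obstruction to $K$-contactness whenever $m_{\varepsilon/2}\neq 0$.
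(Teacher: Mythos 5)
Your proposal is correct and follows essentially the same route as the paper, which proves this corollary precisely by matching the coefficient tuple read off from (\ref{gsasaki}) against (\ref{gc}) of Theorem \ref{maingeneral1} and against the rows of Table II in Theorem \ref{maingeneral}; your values $\kappa=1/(2r)$, $q_{\varepsilon}=r$, $q_{\varepsilon/2}=r/2$ and the resulting incompatibility of $q_{\varepsilon}=q_{\varepsilon/2}=1$ outside $\{\mathbb{S}^{n},\mathbb{R}{\bf P}^{n}\}$ are exactly the intended argument. One small citation slip: the upgrade of the $K$-contact structure on $T_{1}\mathbb{S}^{n}$ and $T_{1}\mathbb{R}{\bf P}^{n}$ to a Sasakian one is supplied by the ``indeed Sasakian'' clause of Theorem \ref{maingeneral1} itself, not by Theorem \ref{t1}, which concerns only $T_{r}\mathbb{C}{\bf P}^{n}$.
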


For the proof of Theorems \ref{maingeneralCP1} and \ref{maingeneral}, we will need to carry out a somewhat more detailed study of ${\mathbb C}{\bf P}^{n}$ as the symmetric space $G/K = {\rm SU}(n+1)/{\rm S}({\rm U}(1)\times {\rm U}(n))$ (see \cite[Ch. III and X]{He}). 

Consider the Lie algebra ${\mathfrak g} = {\mathfrak s}{\mathfrak u}(n+1)$ of ${\rm SU}(n+1)$ as compact real form of the complex simple Lie algebra ${\mathfrak a}_{n} = {\mathfrak s}{\mathfrak l}(n+1,{\mathbb C})$ of the complex $(n+1)\times(n+1)$ matrices of trace $0$ and denote by $E_{jk}$ the $(n+1)\times(n+1)$-matrix with $1$ in the entry of the $j$th row and the $k$th column and $0$ elsewhere. A Cartan subalgebra ${\mathfrak t}_{\mathbb C}$ of ${\mathfrak a}_{n}$ consists of the complex diagonal matrices of trace $0,$ generated by the set $\{T_{i} = E_{ii}-E_{i+1\,i+1},\;(1\leq i\leq n)\}.$ 

Let $e_{i},$ $i = 1,\dots, n+1,$ be the linear function on ${\mathfrak t}_{\mathbb C}$ such that $e_{i}(T)= T_{ii},$ for all $T\in {\mathfrak t}_{\mathbb C}.$ Then $\Pi = \{\alpha_{1},\dots ,\alpha_{n}\}$ is a system of simple roots of ${\mathfrak a}_{n},$ where $\alpha_{i} = e_{i}-e_{i+1},$ and we can take as set of positive roots $\Delta^{+}$ to $\Delta^{+} = \{\alpha_{pq} = \alpha_{p}+\dots + \alpha_{q}\;\colon 1\leq p \leq q \leq n\}.$ A root vector $E_{\alpha_{pq}}$ of $\alpha_{pq}$ is the matrix $E_{p\,q+1}.$ Hence, putting
 \begin{equation}\label{ABC}
A_{jk}  = \sqrt{-1}(E_{jj} -E_{kk}),\;\;\; B^{0}_{jk}  =   E_{jk} - E_{kj},\;\;\; B^{1}_{jk} = \sqrt{-1}(E_{jk} +E_{kj}),
\end{equation}
 the set of matrices $\{A_{i\,i+1}\;(1\leq i\leq n),\; B^{a}_{jk} \;(a = 0,1;\;1\leq j<k\leq n+1)\}$ is a basis of ${\mathfrak s}{\mathfrak u}(n+1).$ In what follows we shall need the Lie  multiplication table:
\begin{equation}\label{bracABC}
\begin{array}{lcl}
[A_{rj},A_{kl}] & = & 0,\\[0.4pc]
[A_{rj},B^{a}_{kl}] & = & (-1)^{a}\delta_{rk}B^{a+1}_{rl} - \delta_{rl}B^{a+1}_{rk} +(-1)^{a+1} \delta_{jk}B^{a+1}_{jl} + \delta_{jl} B^{a+1}_{jk},\\[0.4pc]
[B^{a}_{rj},B^{b}_{kl}] & = & -\delta_{rk}B^{a+b}_{jl}  + (-1)^{b}\delta_{rl}B^{a+b}_{jk} + (-1)^{a}\delta_{jk}B^{a+b}_{rl} + (-1)^{a+b+1}\delta_{jl}B^{a+b}_{rk},
\end{array}
\end{equation} 
for all $a,b\in \{0,1\},$ $a\leq b.$ 

If $n = 1,$ the Lie algebra ${\mathfrak k}$ of $K$ is one-dimensional, where ${\mathfrak k} = {\mathbb R} A_{12},$ and ${\mathfrak m} = {\mathbb R}\{B^{0}_{12},B^{1}_{12}\}.$ If $n\geq 2,$ a simple root system of ${\mathfrak k}$ is given by $\overline{\pi} =\{\alpha_{2},\dots ,\alpha_{n}\}$ and so $\overline{\Delta}^{+}(t) = \{\alpha_{pq}\colon 2\leq p\leq q \leq n\}$ is a positive root system. Hence,
$$
  \begin{array}{lcl}
 {\mathfrak k} & = & {\mathbb R}A_{12}\oplus (\sum_{j=2}^{n}{\mathbb R}A_{j\,j+1}\oplus \sum_{2\leq j<k\leq n+1}({\mathbb R}B^{0}_{jk} \oplus {\mathbb R}B^{1}_{jk}))\cong {\mathbb R}\oplus{\mathfrak s}{\mathfrak u}(n),\\[0.4pc]
  {\mathfrak m} & = & \sum_{i=1}^{n}({\mathbb R}B^{0}_{1\,i+1} \oplus {\mathbb R}B^{1}_{1\,i+1}).
  \end{array}
  $$
  Note that the canonical $G$-invariant complex structure on ${\mathbb C}{\bf P}^{n}$ is determined by the ${\rm Ad}(K)$-invariant endomorphism $I = {\rm ad}_{A_{12}\mid {\mathfrak m}}$ of ${\mathfrak m}.$ We choose as Cartan subspace to ${\mathfrak a} = {\mathbb R} X,$ where $X = \frac{1}{2}B^{0}_{12}.$ From (\ref{bracABC}), the centralizer ${\mathfrak h}$ of ${\mathfrak a}$ in ${\mathfrak k}$ is trivial if $n = 1.$ If $n = 2,$ ${\mathfrak h} = {\mathbb R}(A_{12} + 2A_{23})$ and, if $n\geq 3,$ it is given by
\begin{equation}\label{frakh}
{\mathfrak h} = {\mathbb R}(A_{12} + 2 A_{23}) \oplus \Big(\sum_{i= 3}^{n}{\mathbb R}A_{i\,i+1}\oplus \sum_{3\leq j<k\leq n+1}({\mathbb R}B^{0}_{jk}\oplus {\mathbb R}B^{1}_{jk})\Big)\cong {\mathbb R}\oplus {\mathfrak s}{\mathfrak u}(n-1).
\end{equation}
Moreover, we have
\[
{\rm ad}_{X}^{2}B^{1}_{12} = -B^{1}_{12},\quad {\rm ad}_{X}^{2}B^{a}_{1\,j+2} = -\frac{1}{4}B^{a}_{1\,j+2},\quad a = 0,1;\;\;j = 1,\dots, n-1.
\]
Hence, ${\mathfrak m}_{\varepsilon} = {\mathbb R}B^{1}_{12}$ and ${\mathfrak m}_{\varepsilon/2} = \sum_{j=1}^{n-1}({\mathbb R}B^{0}_{1\,j+2}\oplus{\mathbb R}B^{1}_{1\,j+2}).$ Now, taking the trace form $\langle U, V\rangle = -2\,{\rm trace}\, UV,$ the set $\{X,\mu_{\varepsilon},\mu_{\varepsilon}^{j,a}\;(a = 0,1;\,j = 1,\dots,n-1)\}$ is an orthonormal basis of $({\mathfrak m},\langle\cdot,\cdot\rangle_{\mid {\mathfrak m}}),$ where
\[
\mu_{\varepsilon} = \frac{1}{2}B^{1}_{12},\quad \mu^{j,a}_{\varepsilon/2} = \frac{1}{2}B^{a}_{1\,j+2}.
\]
Applying (\ref{eq.ms4.3}) and (\ref{bracABC}), one gets that ${\mathfrak k}_{\varepsilon} = {\mathbb R}\nu_{\varepsilon}$ and ${\mathfrak k}_{\varepsilon/2} = {\mathbb R}\{\nu^{j,a}_{\varepsilon/2}\;(a = 0,1;\,j = 1,\dots, n-1)\},$ where
\[
\nu_{\varepsilon} = -\frac{1}{2}A_{12},\quad \nu^{j,a}_{\varepsilon/2} = \frac{1}{2}B^{a}_{2\,j+2}.
\]
Moreover, the nonzero brackets of these basic elements of $\overline{\mathfrak m}$ satisfy the following equalities:
 \begin{equation}\label{brackTCP1}
 \begin{array}{l}
 [X,\mu_{\varepsilon}] = -\nu_{\varepsilon},\quad [X,\nu_{\varepsilon}] = \mu_{\varepsilon},\quad [X,\mu^{j,a}_{\varepsilon/2}] = -\frac{1}{2}\nu^{j,a}_{\varepsilon/2},\quad [X,\nu^{j,a}_{\varepsilon/2}] = \frac{1}{2}\mu^{j,a}_{\varepsilon/2},\\[0.5pc]
  [\mu_{\varepsilon},\mu^{j,a}_{\varepsilon/2}]  =  [\nu_{\varepsilon},\nu^{j,a}_{\varepsilon/2}] = \frac{(-1)^{a}}{2}\nu^{j,a+1}_{\varepsilon/2}, \quad [\mu_{\varepsilon}, \nu^{j,a}_{\varepsilon/2}] = -[\nu_{\varepsilon}, \mu^{j,a}_{\varepsilon/2}] = \frac{(-1)^{a}}{2}\mu^{j,a+1}_{\varepsilon/2},\\[0.5pc] 
  [\mu^{j,0}_{\varepsilon/2},\nu^{j,0}_{\varepsilon/2}]  =  [\mu^{j,1}_{\varepsilon/2},\nu^{j,1}_{\varepsilon/2}] = -\frac{1}{2}X, \quad[\mu^{j,0}_{\varepsilon/2}, \nu^{j,1}_{\varepsilon/2}] = -[\mu^{j,1}_{\varepsilon/2}, \nu^{j,0}_{\varepsilon/2}] = \frac{1}{2}\mu_{\varepsilon},\\[0.5pc] 
 [\mu_{\varepsilon},\nu_{\varepsilon}] = -X, \quad [\mu^{j,0}_{\varepsilon/2},\mu^{j,1}_{\varepsilon/2}]= -\frac{1}{2}\nu_{\varepsilon}+ \frac{1}{2}(\frac{1}{2}(A_{12} + 2A_{23}) + A_{3\,j+2})\in {\mathfrak k}_{\varepsilon} \oplus {\mathfrak h},\\[0.4pc]
 [\nu^{j,0}_{\varepsilon/2},\nu^{j,1}_{\varepsilon/2}] = \frac{1}{2}\nu_{\varepsilon}+ \frac{1}{2}(\frac{1}{2}(A_{12} + 2A_{23}) + A_{3\,j+2})\in {\mathfrak k}_{\varepsilon} \oplus {\mathfrak h},\\[0.4pc] 
 [\mu^{j,a}_{\varepsilon/2},\mu^{k,a}_{\varepsilon/2}] = [\nu^{j,a}_{\varepsilon/2},\nu^{k,a}_{\varepsilon/2}] = -\frac{1}{4}B^{0}_{j+2\,k+2}\in {\mathfrak h}\quad (j\neq k),\\[0.5pc]
 [\mu^{j,0}_{\varepsilon/2},\mu^{k,1}_{\varepsilon/2}]= 
[\nu^{j,0}_{\varepsilon/2},\nu^{k,1}_{\varepsilon/2}] =  -\frac{1}{4}B^{1}_{j+2\,k+2}\in {\mathfrak h}\quad (j\neq k).
\end{array}
\end{equation}
\begin{lemma}\label{lh} We have
\[
{\mathfrak h} = {\mathbb R}\{[u,v]_{\mathfrak h}\colon u,v\in \overline{\mathfrak m}\}.
\]
\end{lemma}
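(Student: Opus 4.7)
The plan is to read off, from the multiplication table (\ref{brackTCP1}), enough brackets $[u,v]$ with $u,v\in\overline{\mathfrak m}$ so that their $\mathfrak h$-components span $\mathfrak h$ as described in (\ref{frakh}). The cases $n=1$ (where $\mathfrak h=0$) and $n=2$ (where $\mathfrak h=\mathbb{R}(A_{12}+2A_{23})$ is one-dimensional, produced up to a nonzero scalar by the single bracket $[\mu^{1,0}_{\varepsilon/2},\mu^{1,1}_{\varepsilon/2}]$) are immediate. The substance of the proof is in $n\ge 3$.

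For $n\ge 3$, my strategy splits according to the two types of generators of $\mathfrak h$ appearing in (\ref{frakh}): the ``diagonal'' part $\mathbb{R}(A_{12}+2A_{23})\oplus\sum_{i=3}^{n}\mathbb{R}A_{i,i+1}$ and the ``off-diagonal'' part $\sum_{3\le j<k\le n+1}(\mathbb{R}B^{0}_{jk}\oplus\mathbb{R}B^{1}_{jk})$. For the diagonal part I would use the $n-1$ brackets $[\mu^{j,0}_{\varepsilon/2},\mu^{j,1}_{\varepsilon/2}]$, whose $\mathfrak h$-components equal, by (\ref{brackTCP1}), $\tfrac{1}{4}(A_{12}+2A_{23})+\tfrac{1}{2}A_{3,j+2}$ (using the convention $A_{3,3}=0$). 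The case $j=1$ yields $A_{12}+2A_{23}$, and subtracting it from the $j\ge 2$ cases extracts $A_{3,4},A_{3,5},\ldots,A_{3,n+1}$; the telescoping identity $A_{m,m+1}=A_{3,m+1}-A_{3,m}$ then produces each $A_{i,i+1}$ for $i=3,\ldots,n$. For the off-diagonal part I would invoke the brackets $[\mu^{j,a}_{\varepsilon/2},\mu^{k,b}_{\varepsilon/2}]$ with $j\ne k$ in $\{1,\ldots,n-1\}$ and $a,b\in\{0,1\}$; from (\ref{brackTCP1}) these already lie in $\mathfrak h$ and, up to nonzero scalars, realize every $B^{0}_{p,q}$ and $B^{1}_{p,q}$ with $3\le p<q\le n+1$.

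No real obstacle arises here; this is a direct linear-algebra verification once the brackets of (\ref{brackTCP1}) are written down. The only bookkeeping worth mentioning is ensuring that the vectors $A_{3,j+2}$ obtained in the first family cover the full range $j=2,\ldots,n-1$, so that the telescoping reaches $A_{m,m+1}$ for every $m=4,\ldots,n$, and that the pairs $(j,k)$ with $j\ne k$ in the second family exhaust all index pairs $3\le p<q\le n+1$ corresponding to the $B^{a}_{p,q}$-generators. Both are evident from the index ranges.
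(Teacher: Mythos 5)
Your proposal is correct and follows essentially the same route as the paper: both read off from (\ref{brackTCP1}) that $A_{12}+2A_{23}=4[\mu^{1,0}_{\varepsilon/2},\mu^{1,1}_{\varepsilon/2}]_{\mathfrak h}$, obtain each $A_{i\,i+1}$ by differencing the $\mathfrak h$-components of $[\mu^{j,0}_{\varepsilon/2},\mu^{j,1}_{\varepsilon/2}]$ (your telescoping via $A_{3,j+2}$ is exactly the paper's formula $A_{i\,i+1}=2([\mu^{i-1,0}_{\varepsilon/2},\mu^{i-1,1}_{\varepsilon/2}]_{\mathfrak h}-[\mu^{i-2,0}_{\varepsilon/2},\mu^{i-2,1}_{\varepsilon/2}]_{\mathfrak h})$), and recover the $B^{a}_{jk}$, $3\le j<k\le n+1$, from the brackets $[\mu^{j,a}_{\varepsilon/2},\mu^{k,b}_{\varepsilon/2}]$ with $j\ne k$, then conclude by (\ref{frakh}).
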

\begin{proof} From (\ref{brackTCP1}), we have
$$
\begin{array}{lcl}
A_{12} +2A_{23} =  4[\mu^{1,0}_{\varepsilon/2},\mu^{1,1}_{\varepsilon/2}]_{\mathfrak h}, & & A_{i\,i+1} = 2([\mu^{i-1,0}_{\varepsilon/2},\mu^{i-1,1}_{\varepsilon/2}]_{\mathfrak h} - [\mu^{i-2,0}_{\varepsilon/2},\mu^{i-2,1}_{\varepsilon/2}]_{\mathfrak h}),\\[0.4pc]
B^{0}_{jk}  =  -4[\mu^{j-2,0}_{\varepsilon/2},\mu^{k-2,0}_{\varepsilon/2}]_{\mathfrak h}, & & B^{1}_{jk}  =  -4[\mu^{j-2,0}_{\varepsilon/2},\mu^{k-2,1}_{\varepsilon/2}]_{\mathfrak h},
\end{array}
$$
for all $i = 3,\dots, n$ and $3\leq j<k\leq n+1.$ Then, this lemma follows from (\ref{frakh}).
\end{proof}

Finally, applying (\ref{bracABC}) and (\ref{frakh}), we determine the mapping ${\rm ad}_{\mathfrak h}\colon \overline{\mathfrak m}\to \overline{\mathfrak m}.$

\begin{lemma}\label{hbrackets} We have
$$
\begin{array}{l}
[{\mathfrak h},\overline{\mathfrak n}] = 0,\quad [A_{12} + 2A_{23},\mu^{j,a}_{\varepsilon/2}] = (-1)^{a}(\mu^{j,a+1}_{\varepsilon/2} + 2\delta_{1j} \mu^{1,a+1}_{\varepsilon/2}),\\[0.4pc]
[A_{12} + 2A_{23},\nu^{j,a}_{\varepsilon/2}] = (-1)^{a}(\nu^{j,a+1}_{\varepsilon/2} + 2\delta_{1j}\nu^{1,a+1}_{\varepsilon/2}),\\[0.4pc]
 [A_{i\,i+1},\mu^{j,a}_{\varepsilon/2}] = (-1)^{a}(\delta_{i\,j+1} \mu^{i-1,a+1}_{\varepsilon/2} - \delta_{i\,j+2} \mu^{i-2,a+1}_{\varepsilon/2}),\\[0.4pc]
 [A_{i\,i+1},\nu^{j,a}_{\varepsilon/2}] = (-1)^{a}(\delta_{i\,j+1} \nu^{i-1,a+1}_{\varepsilon/2} - \delta_{i\,j+2} \nu^{i-2,a+1}_{\varepsilon/2}),\\[0.4pc]
 [B^{a}_{pq},\mu^{j,b}_{\varepsilon/2}] = (-1)^{a(b+1)}(\delta_{q\,j+2}\mu^{p-2,a+b}_{\varepsilon/2} + (-1)^{a+1}\delta_{p\,j+2}\mu^{q-2,a}_{\varepsilon/2}),\\[0.4pc]
 [B^{a}_{pq},\nu^{j,b}_{\varepsilon/2}] = (-1)^{a(b+1)}(\delta_{q\,j+2}\nu^{p-2,a}_{\varepsilon/2} + (-1)^{a+1}\delta_{p\,j+2}\nu^{q-2,a}_{\varepsilon/2}),
 \end{array}
$$
for all $j = 1,\dots, n-1,$ $i = 3,\dots, n$ and $3\leq p<q\leq n+1.$
\end{lemma}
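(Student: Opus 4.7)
The plan is to verify the five identities (together with the auxiliary claim $[\mathfrak h,\overline{\mathfrak n}]=0$) by direct substitution into the commutation rules (\ref{bracABC}), using the description (\ref{frakh}) of $\mathfrak h$ and the explicit matrix realizations
\[
X=\tfrac{1}{2}B^0_{12},\quad \mu_\varepsilon=\tfrac{1}{2}B^1_{12},\quad \nu_\varepsilon=-\tfrac{1}{2}A_{12},\quad \mu^{j,a}_{\varepsilon/2}=\tfrac{1}{2}B^a_{1,j+2},\quad \nu^{j,a}_{\varepsilon/2}=\tfrac{1}{2}B^a_{2,j+2}
\]
established just before the lemma. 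The proof is entirely mechanical once one keeps track of the various Kronecker deltas; the five formulas are essentially book-keeping of which of the four terms in each clause of (\ref{bracABC}) survive.

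The identity $[\mathfrak h,\overline{\mathfrak n}]=0$ is already recorded in the discussion preceding Definition 4.1: because $\mathfrak a,\mathfrak m_\varepsilon$ and $\mathfrak k_\varepsilon$ are each one-dimensional and $\mathrm{Ad}(H)$-invariant, the compact group $H$ must act trivially on $\overline{\mathfrak n}$. A direct check is equally easy: testing against each generator of $\mathfrak h$, the contributions from $A_{i\,i+1}$ ($i\ge 3$) and $B^c_{jk}$ ($3\le j<k\le n+1$) against any of $A_{12},B^0_{12},B^1_{12}$ all vanish from (\ref{bracABC}) because the indices $1,2$ never meet the indices $\ge 3$; the only nontrivial case is $[A_{12}+2A_{23},B^1_{12}]$, where a short computation shows the two nonzero contributions cancel.

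For the five remaining identities one substitutes, in each clause of (\ref{bracABC}), the generator indices from (\ref{frakh}) and $(k,l)=(1,j+2)$ or $(k,l)=(2,j+2)$. The surviving contributions come from the $\delta$-factors that can match $j+2\ge 3$, and they express $[\cdot,\cdot]$ as a multiple of some $B^c_{pq}$. Whenever this $B^c_{pq}$ appears with $p>q$, one normalizes using $B^0_{qp}=-B^0_{pq}$ and $B^1_{qp}=B^1_{pq}$, i.e.\ $B^c_{qp}=(-1)^{c+1}B^c_{pq}$, and converts back to the $\tfrac{1}{2}$-scaled basis vectors $\mu^{i,a}_{\varepsilon/2},\nu^{i,a}_{\varepsilon/2}$. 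The formulas for $[B^a_{pq},\mu^{j,b}_{\varepsilon/2}]$ and $[B^a_{pq},\nu^{j,b}_{\varepsilon/2}]$ involve the prefactor $(-1)^{a(b+1)}$; this factor arises precisely from the combination of $(-1)^a,(-1)^b,(-1)^{a+b+1}$ in the third clause of (\ref{bracABC}) together with the normalization sign $(-1)^{c+1}$. Observe that, because $\mu^{j,a}_{\varepsilon/2}$ and $\nu^{j,a}_{\varepsilon/2}$ are obtained from one another by the index shift $1\leftrightarrow 2$, the $\mu$- and $\nu$-versions of each formula can be handled in parallel.

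The main (and only) obstacle is the sign bookkeeping: keeping track of mod-2 arithmetic on the superscripts of the $B^c$'s, of the antisymmetry of $B^0$ versus the symmetry of $B^1$, and of the signs $(-1)^a$, $(-1)^b$, $(-1)^{a+b+1}$ produced by (\ref{bracABC}). No conceptual input beyond (\ref{bracABC}), (\ref{frakh}) and the explicit realizations is required; organizing the proof as a table in which each generator of $\mathfrak h$ is paired with each basis vector of $\overline{\mathfrak m}$ would keep the sign tracking transparent.
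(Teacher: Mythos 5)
Your proposal is correct and coincides with the paper's own (essentially omitted) argument: the lemma is presented there as an immediate consequence of substituting the generators from (\ref{frakh}) and the basis vectors $\mu^{j,a}_{\varepsilon/2}=\tfrac12 B^a_{1\,j+2}$, $\nu^{j,a}_{\varepsilon/2}=\tfrac12 B^a_{2\,j+2}$ into the multiplication table (\ref{bracABC}), with the normalization $B^c_{qp}=(-1)^{c+1}B^c_{pq}$ and the restriction $a\le b$ in the third clause producing the sign $(-1)^{a(b+1)}$. Carrying out the bookkeeping exactly as you describe would in fact also show that the superscripts in the last two displayed identities should read $a+b$ (mod $2$) rather than $a$, e.g. $[B^1_{34},\mu^{1,1}_{\varepsilon/2}]=\mu^{2,0}_{\varepsilon/2}$, so your check would catch a typo in the statement.
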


\noindent{\bf Proof of Theorem \ref{maingeneralCP1}.} From (\ref{A}), we get
\begin{equation}\label{e1}
\begin{array}{l}
a_{0}= \rho^{2},\quad a_{\varepsilon} = \frac{1}{a_{22}^{2}}(1 + a_{21}^{2}\rho^{2}),\quad b_{\varepsilon} = \frac{1}{a_{22}^{2}a_{33}^{2}}(\rho^{2}A_{13}^{2}+a_{22}^{2} + a_{32}^{2}),\\[0.4pc]
a_{0\varepsilon} = -\frac{\rho^{2}a_{21}}{a_{22}},\quad b_{0\varepsilon} = \frac{\rho^{2}}{a_{22}a_{33}}A_{13},\quad c_{\varepsilon} = -\frac{\rho^{2}}{a_{22}^{2}a_{33}}(a_{21}A_{13}+\frac{a_{32}}{\rho^{2}}),
\end{array}
\end{equation}
where $A_{13} = a_{21}a_{32} - a_{31}a_{22}.$ Here $A_{ij}$ denotes the adjoint of the element $a_{ij}$ of $A.$ Put $\xi_{o_{H}} = \sum_{j=1}^{3}\lambda_{j}\xi_{j},$ where $\sum_{j=1}^{3}\lambda_{j}^{2} = 1.$ Then the $G$-invariant one-form $\eta := {\bf g}(\xi,\cdot)$ is given at the origin by $\eta=\langle \kappa_{1}X + \kappa_{2} \mu_{\varepsilon} + \kappa_{3}\nu_{\varepsilon},\cdot\rangle,$ where 
\begin{equation}\label{kappai}
\kappa_{1} = \rho\lambda_{1},\quad \kappa_{2} = \frac{1}{a_{22}}(\lambda_{2}-\rho a_{21}\lambda_{1}),\quad \kappa_{3} = \frac{1}{a_{22}a_{33}}(\rho A_{13}\lambda_{1} - a_{32}\lambda_{2} + a_{22}\lambda_{3}).
\end{equation}
Because $d\eta(u,v) = -\frac{1}{2}\eta([u^{\tau_{H}},v^{\tau_{H}}]) = -\frac{1}{2}\eta([u,v]_{\overline{\mathfrak m}}),$ for all $u,v\in \overline{\mathfrak m},$ where $u^{\tau_{H}}$ and $v^{\tau_{H}}$ denote the local vector fields on $G/H$ around $o_{H},$ defined in (\ref{tautau}), with $u^{\tau_{H}}_{o_{H}} = u$ and $v^{\tau_{H}}_{o_{H}} = v,$ it follows, using (\ref{eq.ms4.3}) and (\ref{brackTCP1}), that
$$
\begin{array}{lcl}
d\eta(X,\mu_{\varepsilon}) = \frac{\kappa_{3}}{2},\quad d\eta(X,\nu_{\varepsilon}) = -\frac{\kappa_{2}}{2}, & &
d\eta(\mu_{\varepsilon},\nu_{\varepsilon}) = \frac{\kappa_{1}}{2},\\[0.4pc]
d\eta(\mu^{j,0}_{\varepsilon/2},\mu^{j,1}_{\varepsilon/2}) = -d\eta(\nu^{j,0}_{\varepsilon/2},\nu^{j,1}_{\varepsilon/2}) = \frac{\kappa_{3}}{4}, & & d\eta(\mu^{j,0}_{\varepsilon/2},\nu^{j,0}_{\varepsilon/2}) = d\eta(\mu^{j,1}_{\varepsilon/2},\nu^{j,1}_{\varepsilon/2}) = \frac{\kappa_{1}}{4},\\[0.4pc]
d\eta(\mu^{j,0}_{\varepsilon/2},\nu^{j,1}_{\varepsilon/2}) = -d\eta(\mu^{j,1}_{\varepsilon/2},\nu^{j,0}_{\varepsilon/2}) = -\frac{\kappa_{2}}{4}, & &
\end{array}
$$
the rest of components of $d\eta$ at $o_{H}$ being zero. From here, the $G$-invariant $(1,1)$-tensor field $\varphi$ such that $d\eta = {\bf g}(\cdot,\varphi \cdot)$ is given by
\begin{equation}\label{varphi}
\varphi \xi_{i} = \frac{1}{2}\sum_{l=1}^{3}\kappa_{l}\sum_{j,k = 1}^{3}\epsilon_{i,j,k}A_{jl}\xi_{k},\quad i\in \{1,2,3\}.
\end{equation}
 Putting $\varkappa_{1} = a_{33}\kappa_{2}- a_{32} \kappa_{3}$ and $\varkappa_{2} = a_{33}(a_{22}\kappa_{1}-a_{21}\kappa_{2}) + \kappa_{3}A_{13},$ we obtain
$$
\begin{array}{l}
\varphi\xi_{1}  =  \frac{1}{2\rho}(-a_{22}\kappa_{3}\xi_{2} + \varkappa_{1}\xi_{3}),\quad \varphi\xi_{2}  =  \frac{1}{2\rho}({\kappa_{3}}a_{22}\xi_{1} -\rho\varkappa_{2}\xi_{3}),\quad\varphi\xi_{3}  =  \frac{1}{2\rho}(-\varkappa_{1}\xi_{1} + \rho\varkappa_{2} \xi_{2}),\\[0.4pc]
\varphi \mu^{j,0}_{\varepsilon/2}  = -\frac{\kappa_{3}}{4a_{\varepsilon/2}}\mu^{j,1}_{\varepsilon/2}-\frac{\kappa_{1}}{4b_{\varepsilon/2}}\nu^{j,0}_{\varepsilon/2} +  \frac{\kappa_{2}}{4b_{\varepsilon/2}}\nu^{j,1}_{\varepsilon/2},\quad \varphi \mu^{j,1}_{\varepsilon/2} = \frac{\kappa_{3}}{4a_{\varepsilon/2}}\mu^{j,0}_{\varepsilon/2} - \frac{\kappa_{2}}{4b_{\varepsilon/2}}\nu^{j,0}_{\varepsilon/2} - \frac{\kappa_{1}}{4b_{\varepsilon/2}}\nu^{j,1}_{\varepsilon/2},\\[0.4pc]

\varphi \nu^{j,0}_{\varepsilon/2}  =  \frac{\kappa_{1}}{4a_{\varepsilon/2}}\mu^{j,0}_{\varepsilon/2} + \frac{\kappa_{2}}{4a_{\varepsilon/2}}\mu^{j,1}_{\varepsilon/2} + \frac{\kappa_{3}}{4b_{\varepsilon/2}}\nu^{j,1}_{\varepsilon/2},\quad\varphi \nu^{j,1}_{\varepsilon/2}  = -\frac{\kappa_{2}}{4a_{\varepsilon/2}}\mu^{j,0}_{\varepsilon/2} + \frac{\kappa_{1}}{4a_{\varepsilon/2}}\mu^{j,1}_{\varepsilon/2}-\frac{\kappa_{3}}{4b_{\varepsilon/2}}\nu^{j,0}_{\varepsilon/2}.
\end{array}
$$
Hence, using a straightforward calculation, $\varphi$ satisfies $\varphi^{2} = -I + \eta\otimes \xi,$ or equivalently ${\bf g}$ is compatible with the structure $(\varphi,\xi,\eta),$ and so $(\xi,{\bf g})$ is a contact metric structure, if and only if the following conditions hold:
\begin{equation}\label{e2}
\left\{
\begin{array}{l}
a_{22}^{2}\kappa_{3}^{2} + \varkappa_{1}^{2} = 4\rho^{2}(1-\lambda_{1}^{2}),\quad a_{22}^{2}\kappa_{3}^{2} + \rho^{2}\varkappa_{2}^{2} = 4\rho^{2}(1-\lambda_{2}^{2}),\\[0.4pc]
\varkappa_{1}^{2} + \rho^{2}\varkappa_{2}^{2} = 4\rho^{2}(1-\lambda_{3}^{2});\\[0.4pc]
\varkappa_{1}\varkappa_{2} = 4\rho\lambda_{1}\lambda_{2},\quad a_{22}\kappa_{3}\varkappa_{2} = 4\rho\lambda_{1}\lambda_{3}, \quad a_{22}\kappa_{3}\varkappa_{1} = 4\rho^{2}\lambda_{2}\lambda_{3};\\[0.4pc]
a_{\varepsilon/2}(\kappa_{1}^{2} + \kappa^{2}_{2}) + b_{\varepsilon/2}\kappa^{2}_{3} = 16a_{\varepsilon/2}^{2}b_{\varepsilon/2},\quad
b_{\varepsilon/2}(\kappa_{1}^{2} + \kappa_{2}^{2}) + a_{\varepsilon/2} \kappa^{2}_{3}= 16a_{\varepsilon/2}b_{\varepsilon/2}^{2},\\[0.4pc]
 \kappa_{1}\kappa_{3}(a_{\varepsilon/2}-b_{\varepsilon/2}) = \kappa_{2}\kappa_{3}(a_{\varepsilon/2}-b_{\varepsilon/2}) = 0.
 \end{array}
 \right.
 \end{equation}

Moreover, because $\varphi \xi = 0,$ we also have
\begin{equation}\label{esupletoria}
a_{22}\lambda_{2}\kappa_{3} = \lambda_{3}\varkappa_{1},\quad \rho\lambda_{3}\varkappa_{2} = a_{22}\lambda_{1}\kappa_{3},\quad \lambda_{1}\varkappa_{1} = \rho\lambda_{2}\varkappa_{2}.
\end{equation}

\vspace{0.1cm}

\noindent{\bf Type AI:} Using (\ref{kappai}) and (\ref{esupletoria}), it follows that $\kappa_{3} = 0$ and $\varkappa_{1} = 0.$ Hence, $\kappa_{2} = 0.$ Then we have that $a_{21} = A_{13} = 0$ and also $a_{31} = 0.$ Now, (\ref{e2}) hold if and only if
\[
a_{22}a_{33} = \frac{2}{\rho}, \quad \rho^{2} = 16a_{\varepsilon/2}b_{\varepsilon/2}.
\]
Then, putting $\kappa = \rho,$ $q_{\varepsilon} = \frac{a_{22}^{2}\kappa}{2},$ $\alpha =-\frac{a_{32}\kappa}{2a_{22}}$ and $q_{\varepsilon/2} = \frac{\kappa}{4a_{\varepsilon/2}},$ and using (\ref{e1}) the theorem is proved for this case.

\vspace{0.1cm}

 \noindent {\bf Type AII:} From (\ref{kappai}) and (\ref{e2}), it follows that $\kappa_{1} = \kappa_{3} = 0$ and then $a_{32} = a_{21} =  \varkappa_{2} = 0.$ Hence, the equations in (\ref{e2}) reduce to 
 \[
 a_{33} = 2\rho a_{22},\quad \frac{1}{a_{22}^{2}}= 16a_{\varepsilon/2}b_{\varepsilon/2}.
 \]
 Then the result for this case is obtained from (\ref{e1}), putting $\kappa = \frac{1}{a_{22}},$ $q_{\varepsilon} = \frac{1}{2a_{22}\rho^{2}},$ $\alpha = \frac{\rho}{2a_{22}^{2}}A_{13}$ and $q_{\varepsilon/2} = \frac{1}{4a_{22}a_{\varepsilon/2}}.$

\vspace{0.1cm}

\noindent {\bf Type AIII:} From (\ref{kappai}), we get $\kappa_{1} = \kappa_{2}= 0$ and $\kappa_{3} = \pm\frac{1}{a_{33}},$ and from (\ref{esupletoria}), it follows that $\varkappa_{1} = \varkappa_{2} = 0.$ Then, $A_{13} = a_{32} = 0$ and also $a_{31} = 0.$ Hence, (\ref{e2}) holds if and only if 
\[
a_{22} = 2\rho a_{33},\quad a_{\varepsilon/2} = b_{\varepsilon/2} = \frac{1}{4a_{33}}.
\]
Now, putting $\kappa = \frac{1}{a_{33}},$ $q_{\varepsilon} = \frac{1}{2\rho^{2}a_{33}}$ and $\alpha = -\frac{\rho^{2}a_{21}}{a_{22}}$ and using (\ref{e1}) we get the theorem for this case.

\vspace{0.1cm}

\noindent{\bf Type BI:} From (\ref{kappai}) and (\ref{esupletoria}) it follows that $\kappa_{3} = 0$ and $\lambda_{1}\varkappa_{1} = \rho\lambda_{2}\varkappa_{2}.$ Then (\ref{e2}) holds if moreover $\varkappa_{2}^{2} = 4\lambda_{1}^{2}$ and $\kappa_{1}^{2} + \kappa_{2}^{2} = 16a_{\varepsilon/2}b_{\varepsilon/2}.$ Then, putting $\kappa = \frac{1}{a_{22}},$ $\alpha = A_{13}$ and $q_{\varepsilon}  = \frac{1}{\lambda_{2}}(\lambda_{2}-\rho a_{21}\lambda_{1}),$ we have
\begin{equation}\label{BI1}
a_{31} = -\kappa\alpha q_{\varepsilon}, \quad a_{21} = \frac{\lambda_{2}(1-q_{\varepsilon})}{\rho\lambda_{1}}.
\end{equation}
On the other hand, the condition $\lambda_{1}\varkappa_{1} = \rho\lambda_{2}\varkappa_{2}$ is equivalent to $\rho^{2} \lambda_{1}^{2}= \kappa^{2}q_{\varepsilon}(1-q_{\varepsilon}\lambda_{2}^{2}).$ Hence, using (\ref{kappai}), $q_{\varepsilon} = \frac{\kappa_{1}^{2} + \kappa_{2}^{2}}{\kappa^{2}}$ and consequently, $a_{\varepsilon/2}b_{\varepsilon/2} = \frac{\kappa^{2}q_{\varepsilon}}{16}.$ Moreover, we have that $q_{\varepsilon}<\frac{1}{\lambda^{2}}.$ We put
\begin{equation}\label{BI2}
a_{\varepsilon/2} = \frac{\kappa\sqrt{q_{\varepsilon}}}{4q_{\varepsilon/2}},\quad b_{\varepsilon/2} = \frac{\kappa\sqrt{q_{\varepsilon}}q_{\varepsilon/2}}{4},
\end{equation}
where $q_{\varepsilon/2}$ is a positive constant. Finally, the condition $\varkappa_{2}^{2} = 4\lambda_{1}^{2}$ is equivalent to $(\rho\lambda_{1} - a_{21}\kappa^{2}\lambda_{2}q_{\varepsilon})^{2}a_{33}^{2} = 4\kappa^{2}\lambda_{1}^{2}.$ Then, putting $\lambda_{1} = \cos\theta$ and $\lambda_{2} = \sin\theta$ and $\varrho = \sqrt{q_{\varepsilon}(1-q_{\varepsilon}\sin^{2}\theta)},$ we have
\begin{equation}\label{BI3}
\rho = \frac{\kappa\varrho}{|\cos\theta |}, \quad a_{33} = \frac{2\varrho}{q_{\varepsilon}|\cos\theta |}.
\end{equation}
Now, from (\ref{e1}), using (\ref{BI1}), (\ref{BI2}) and (\ref{BI3}), the result is proved for this case.

\vspace{0.1cm}


\noindent{\bf Type BII:} Put $\kappa = \frac{1}{a_{33}}$ and $\beta = \frac{1}{\lambda_{3}}(\rho A_{13}\lambda_{1} + a_{22}\lambda_{3}).$ Then, from (\ref{kappai}), we get
\begin{equation}\label{k1}
\kappa_{3} = \frac{\kappa \beta}{a_{22}}\lambda_{3},\quad A_{13} = \frac{(\beta -a_{22})\lambda_{3}}{\rho\lambda_{1}},
\end{equation}
and, using (\ref{esupletoria}), we have $\varkappa_{1} = 0$ and $\rho\lambda_{3}\varkappa_{2} = a_{22}\lambda_{1}\kappa_{3}.$ Here, the condition $\varkappa_{1} = 0$ is equivalent to ${\rho\lambda_{1}}a_{21} = -a_{32}\kappa^{2}\beta\lambda_{3}.$ Then (\ref{e2}) reduces to
\begin{equation}\label{e3}
\left\{
\begin{array}{l}
\rho\lambda_{3}\varkappa_{2} = a_{22}\lambda_{1}\kappa_{3},\\[0.4pc]
a_{22}\kappa_{3}\varkappa_{2} = 4\rho\lambda_{1}\lambda_{3},\\[0.4pc]
a_{\varepsilon/2}^{2} =  b_{\varepsilon/2}^{2}= \frac{\kappa^{2}_{1} + \kappa_{2}^{2} + \kappa^{2}_{3}}{16}.
\end{array}
\right.
\end{equation}
From (\ref{k1}), the first two equations in (\ref{e3}) can be expressed as $\rho\varkappa_{2} = \kappa \beta\lambda_{1}$ and $\kappa \beta \varkappa_{2} = 4\rho\lambda_{1}.$ Then $\beta= \frac{2\rho}{\kappa}>0$ and $\varkappa_{2} = 2\lambda_{1}.$ Moreover, putting $q_{\varepsilon} = a_{22}$ and $\alpha = a_{32},$ we get
\[
A_{13} =  \frac{(2\rho - \kappa q_{\varepsilon})\lambda_{3}}{\kappa \rho \lambda_{1}},\quad a_{21} = -\frac{2\alpha\kappa\lambda_{3}}{\lambda_{1}},\quad \varkappa_{2} = \frac{(\rho(q_{\varepsilon}^{2}\lambda_{1} + 4(\alpha^{2}\kappa^{2} + 1)\lambda_{3}^{2}) -2\kappa q_{\varepsilon}\lambda_{3}^{2})}{\kappa q_{\varepsilon} \lambda_{1}}.
\]
Now, putting $\lambda_{1}= \cos\theta,$ $\lambda_{2} = \sin\theta$ and $\varrho = q_{\varepsilon}^{2}\cos^{2}\theta + 4(\alpha^{2}\kappa^{2} + 1)\sin^{2}\theta,$ we have that $\rho  = \frac{2\kappa q_{\varepsilon}}{\varrho}$ and, moreover,
\[
A_{13} = \frac{4-\varrho}{2\kappa}\tan\theta, \quad a_{21}  =  -2\alpha\kappa\tan\theta,\quad a_{31} = \frac{q_{\varepsilon}^{2}-4(1 + \alpha^{2}\kappa^{2})}{2\kappa q_{\varepsilon}}\sin\theta\cos\theta.
\] 
Then the result for this case follows applying again (\ref{e1}).

\vspace{0.1cm}

\noindent{\bf Type BIII:} From (\ref{kappai}) and (\ref{esupletoria}), we get $\kappa_{1} = \varkappa_{2} = 0.$ Moreover, putting $\kappa = \frac{1}{a_{33}}$ and $\beta = \frac{1}{\lambda_{3}}(a_{22}\lambda_{3} - a_{32}\lambda_{2}),$ it follows that $a_{21} = \frac{\kappa^{2}A_{13} \beta\lambda_{3}}{\lambda_{2}}.$ On the other hand, the equations in (\ref{e2}) reduces to
\begin{equation}\label{e4}
\left\{
\begin{array}{l}
a_{22}^{2}\kappa_{3}^{2} + \varkappa_{1}^{2} = 4\rho^{2},\\[0.4pc]
a_{22}^{2}\kappa_{3}^{2} = 4\rho^{2}\lambda_{3}^{2},\\[0.4pc]
a_{22}\kappa_{3}\varkappa_{1} = 4\rho^{2}\lambda_{2}\lambda_{3},\\[0.4pc]
a_{\varepsilon/2}^{2} = b_{\varepsilon/2}^{2} = \frac{\kappa_{2}^{2} + \kappa_{3}^{2}}{16}.
\end{array}
\right.
\end{equation}
Taking into account that $\kappa_{3}=\frac{\kappa\beta}{a_{22}}\lambda_{3}$ and $\varkappa_{1} = \frac{1}{\kappa a_{22}}(\lambda_{2} - a_{32}\kappa^{2}\beta\lambda_{3}),$ we get from the second equation in (\ref{e4}) that $\kappa^{2}\beta^{2} = 4\rho^{2}.$ Then, from the third equation we have  $\varkappa_{1} = \kappa\beta\lambda_{2}$ and the first equation holds. Hence, we obtain that $a_{22} = \frac{1}{\kappa^{2}\beta}(\lambda_{2}^{2} + 4\rho^{2}\lambda_{3}^{2})$ and it implies that $\beta>0,$ that is, $\beta = \frac{2\rho}{\kappa}.$ Now, putting $\lambda_{2} = \cos\theta,$ $\lambda_{3} = \sin\theta,$ $q_{\varepsilon} = 2\rho$ and $\alpha = \frac{2\kappa A_{13}\rho}{\varrho},$ where $\varrho = \cos^{2}\theta + 4\rho^{2}\sin^{2}\theta,$ we have
$$
\begin{array}{l}
a_{21} = \frac{2\alpha\varrho}{q_{\varepsilon}}\tan\theta,\quad a_{22} = \frac{\varrho}{\kappa q_{\varepsilon}},\quad a_{31} = -\frac{2\alpha\varrho}{q_{\varepsilon}},\\[0.4pc]
 a_{32} =\frac{1-q_{\varepsilon}^{2}}{\kappa q_{\varepsilon}}\sin\theta\cos\theta, \quad a_{\varepsilon/2} = b_{\varepsilon/2} = \frac{\kappa q_{\varepsilon}\sqrt{\varrho}}{4\varrho}.
\end{array}
$$
Then, from (\ref{e1}), we get the result for this case.

\vspace{0.1cm}

\noindent{\bf Type C:} Using (\ref{esupletoria}), we get
\begin{equation}\label{varkappas}
\varkappa_{1} = \frac{a_{22}\kappa_{3}\lambda_{2}}{\lambda_{3}},\quad \varkappa_{2} = \frac{a_{22}\kappa_{3}\lambda_{1}}{\rho\lambda_{3}}.
\end{equation}
Then, from the first equation in (\ref{e2}), we have $a_{22}^{2}\kappa^{2}_{3} = 4\rho^{2}\lambda_{3}^{2}.$ Hence, one gets
\[
\varkappa_{1}^{2} = 4\rho^{2}\lambda_{2}^{2},\quad \varkappa_{2}^{2} = 4\lambda_{1}^{2}
\]
and the equations in (\ref{e2}) hold if, moreover, $a_{\varepsilon/2}^{2} = b^{2}_{\varepsilon/2} = \frac{\kappa^{2}_{1} + \kappa^{2}_{2} + \kappa^{2}_{3}}{16}.$
Put $\kappa = \frac{1}{a_{33}},$ $\alpha = a_{21}$ and $\beta = \frac{1}{\lambda_{3}}(\rho A_{13}\lambda_{1} - a_{32}\lambda_{2} + a_{22}\lambda_{3}).$ Then $\kappa_{3} = \frac{\kappa\beta\lambda_{3}}{a_{22}}$ and the equations in (\ref{varkappas}) are written as $\varkappa_{1} = \kappa\beta\lambda_{2}$ and $\rho\varkappa_{2} = \kappa\beta\lambda_{1}.$ Hence, we have
\[
a_{32} = \frac{(1-\kappa^{2}\beta a_{22})\lambda_{2}-\rho\alpha\lambda_{1}}{\kappa^{2}\beta\lambda_{3}},\quad A_{13} = \frac{\kappa^{2}\beta a_{22}\lambda_{1} + \rho(\alpha(\lambda_{2}-\rho\alpha\lambda_{1}) - a_{22}^{2}\rho\lambda_{1})}{\kappa^{2}\beta\rho\lambda_{3}}.
\]
 Moreover, putting $q_{\varepsilon} = \rho,$ we obtain $\kappa^{2}\beta^{2} = 4 q_{\varepsilon}^{2}.$ Since $a_{22}\lambda_{3} = \beta \lambda_{3}+a_{32}\lambda_{2} - q_{\varepsilon} A_{13}\lambda_{1},$ it follows that $a_{22}$ is a solution of the second order equation $q_{\varepsilon}^{2}\lambda_{1}^{2} a_{22}^{2} - \kappa^{2}\beta a_{22} + \varrho = 0,$ where $\varrho = 4q_{\varepsilon}^{2}\lambda_{3}^{2} + \lambda_{2}^{2} + \alpha q_{\varepsilon}\lambda_{1}(\alpha q_{\varepsilon}\lambda_{1} - 2\lambda_{2}).$ Hence, it must happen that $\varrho\lambda_{1}^{2}\leq \kappa^{2}$ and $\beta>0.$ Therefore, putting $\lambda_{1} = \cos\theta\cos\phi,$ $\lambda_{2} = \sin\theta\cos\phi$ and $\lambda_{3} = \sin\phi,$ we conclude that, for each pair of positive constants $\kappa$ and $q_{\varepsilon}$ and $\alpha\in {\mathbb R}$ such that $\varrho\cos^{2}\theta\cos^{2}\phi\leq \kappa^{2},$ there exists a contact metric structure $(\xi,{\bf g})$ of Type C and the result follows, putting $\gamma_{1} = a_{32}$ and $\gamma_{2} = A_{31},$ using (\ref{e1}).
 
 \vspace{0.2cm}
 
\noindent{\bf Proof of Theorem \ref{maingeneral}.} From Theorem \ref{maingeneralCP1}, $(\xi,{\bf g})$ is orthogonal, for each Type AI-AIII, BI-BIII and C, if and only if the following conditions hold:
\begin{enumerate}
\item[$\bullet$] Type A (AI-III): $\alpha = 0,$
\item[$\bullet$] Type BI: $q_{\varepsilon} = 1$ and $\alpha = 0.$ Then, $\varrho = |\cos\theta|,$
\item[$\bullet$] Type BII: $q_{\varepsilon} = 2$ and $\alpha = 0.$ Then, $\varrho = 4,$
\item[$\bullet$] Type BIII: $q_{\varepsilon} = 1$ and $\alpha = 0.$ Then, $\varrho = 1,$
\item[$\bullet$] Type C: $\gamma_{1} = \gamma_{2} = 0$ and $\alpha = 0.$ Then, $\kappa = q_{\varepsilon} = \frac{1}{2},$ $\delta = 2$ and $\varrho = 1-\cos^{2}\theta\cos^{2}\phi.$
\end{enumerate}
From here, we obtain the coefficients $a_{0},$ $a_{\varepsilon},$ $b_{\varepsilon},$ $a_{\varepsilon/2}$ and $b_{\varepsilon/2}$ of ${\bf g}$, given in Table II. 

Next, let ${\mathfrak U}\colon \overline{\mathfrak m}\times \overline{\mathfrak m}\to \overline{\mathfrak m}$ be the symmetric bilinear function defined in (\ref{U}) for the Euclidean space $(\overline{\mathfrak m},{\bf g}_{o_{H}}),$ where ${\bf g}$ is an orthogonal metric on $T_{r}{\mathbb C}{\bf P}^{n}.$ Using  (\ref{eq.ms4.3}), (\ref{Bb}) and (\ref{brackTCP1}), together with Lemma \ref{pbrack}, we have
 \begin{equation}\label{UCP}
\begin{array}{l}
{\mathfrak U}(X,\mu_{\varepsilon}) = \frac{a_{0}-a_{\varepsilon}}{2b_{\varepsilon}}\nu_{\varepsilon},\quad {\mathfrak U}(X,\nu_{\varepsilon}) = \frac{b_{\varepsilon}-a_{0}}{2a_{\varepsilon}}\mu_{\varepsilon},\quad
{\mathfrak U}(X,\mu^{j,a}_{\varepsilon/2}) = \frac{a_{0}-a_{\varepsilon/2}}{4b_{\varepsilon/2}}\nu^{j,a}_{\varepsilon/2},\\[0.4pc]

{\mathfrak U}(X,\nu^{j,a}_{\varepsilon/2}) = \frac{b_{\varepsilon/2}-a_{0}}{4a_{\varepsilon/2}}\mu^{j,a}_{\varepsilon/2}, \quad {\mathfrak U}(\mu_{\varepsilon},\nu_{\varepsilon}) = \frac{a_{\varepsilon}-b_{\varepsilon}}{2a_{0}}X,\quad {\mathfrak U}(\mu_{\varepsilon},\mu^{j,a}_{\varepsilon/2}) = \frac{(-1)^{a}(a_{\varepsilon/2}-a_{\varepsilon})}{4b_{\varepsilon/2}}\nu^{j,a+1}_{\varepsilon/2},\\[0.4pc]

{\mathfrak U}(\mu_{\varepsilon},\nu^{j,a}_{\varepsilon/2}) = \frac{(-1)^{a}(b_{\varepsilon/2}-a_{\varepsilon})}{4a_{\varepsilon/2}}\mu^{j,a+1}_{\varepsilon/2},\quad {\mathfrak U}(\nu_{\varepsilon}, \mu^{j,a}_{\varepsilon/2}) = \frac{(-1)^{a}(b_{\varepsilon}-a_{\varepsilon/2})}{4a_{\varepsilon/2}}\mu^{j,a+1}_{\varepsilon/2},\\[0.4pc]

{\mathfrak U}(\nu_{\varepsilon},\nu^{j,a}_{\varepsilon/2}) = \frac{(-1)^{a}(b_{\varepsilon/2}-b_{\varepsilon})}{4b_{\varepsilon/2}}\nu^{j,a+1}_{\varepsilon/2},\quad {\mathfrak U}(\mu^{j,a}_{\varepsilon/2},\nu^{j,a}_{\varepsilon/2}) = \frac{(a_{\varepsilon/2}-b_{\varepsilon/2})(a_{0} + a_{\varepsilon})}{4a_{0}a_{\varepsilon}}X,\\[0.4pc]

{\mathfrak U}(\mu^{j,0}_{\varepsilon/2},\nu^{j,1}_{\varepsilon/2}) = -{\mathfrak U}(\mu^{j,1}_{\varepsilon/2},\nu^{j,0}_{\varepsilon/2}) = -\frac{(a_{\varepsilon/2}-b_{\varepsilon/2})}{4a_{\varepsilon}}\mu_{\varepsilon},
\end{array}
\end{equation}
for all $j = 1,\dots, n-1;$ $a = 0,1$ and  the rest of its components being zero. Therefore, applying (\ref{Killing}), we obtain in Table II the set of all $G$-invariant orthogonal metrics ${\bf g}$ for which $(\xi,{\bf g})$ is $K$-contact. 

\vspace{0.2cm}

Finally we prove that the contact metric structures of Type {\rm AI} and Type {\rm AII} are isomorphic for each triple of positive constants $(\kappa, q_{\varepsilon},q_{\varepsilon/2}).$ 

Denote by ${\bf g}^{\kappa,q_{\varepsilon},q_{\varepsilon/2}}_{1}$ and ${\bf g}^{\kappa,q_{\varepsilon},q_{\varepsilon/2}}_{2}$ the $G$-invariant contact metrics on $T_{r}{\mathbb C}{\bf P}^{n} = G/H$ associated to the structures of Type AI y AII, respectivamente, and consider the linear isometry $L\colon (\overline{\mathfrak m},{\bf g}^{\kappa,q_{\varepsilon},q_{\varepsilon/2}}_{1o_{H}}) \to (\overline{\mathfrak m},{\bf g}^{\kappa,q_{\varepsilon},q_{\varepsilon/2}}_{2o_{H}}),$ given by
 \begin{equation}\label{L}
 \begin{array}{l}
 LX = \mu_{\varepsilon},\quad L\mu_{\varepsilon} = -X,\quad L\nu_{\varepsilon} = \nu_{\varepsilon},\quad
 L\mu^{j,a}_{\varepsilon/2} = (-1)^{a}\mu^{j,a+1}_{\varepsilon/2},\quad L\nu^{j,a}_{\varepsilon/2} = \nu^{j,a}_{\varepsilon/2},
 \end{array}
 \end{equation}
 for all $j = 1,\dots, n-1$ and $a = 0,1.$ Let us check that $L$ satisfies the conditions (i), (ii) and (iii) in Lemma \ref{lfirst}. By a straightforward calculation, using (\ref{brackTCP1}), $L$ verifies (i). For (ii), note that the non-zero ${\mathfrak h}$-components of the brackets in (\ref{brackTCP1}) are of the form $[\mu^{j,a}_{\varepsilon/2},\mu^{k,b}_{\varepsilon/2}]_{\mathfrak h},$ for $j,k = 1,\dots, n-1$ and $a,b = 0,1,$ and we get
 \[
 [L\mu^{j,a}_{\varepsilon/2},L\mu^{k,b}_{\varepsilon/2}]_{\mathfrak h} = [\mu^{j,a}_{\varepsilon/2},\mu^{k,b}_{\varepsilon/2}]_{\mathfrak h}.
 \]
 Then, from Lemma \ref{lh}, the condition (ii) is equivalent to
 \begin{equation}\label{eii}
 L\circ {\rm ad}_{u} = {\rm ad}_{u}\circ L,\quad u\in {\mathfrak h},
 \end{equation}
 which is satisfied using Lemma \ref{hbrackets}. From (\ref{L}), (iii) is verified. Finally, because ${\rm SU}(n+1)$ is simply connected and ${\rm S}({\rm U}(1)\times{\rm U}(n-1))$ is connected, diffeomorphic to ${\rm SU}(1)\times{\rm SU}(n-1)\times {\mathbb S}^{1},$ it follows that $T_{r}({\mathbb C}{\bf P}^{n})$ is also simply connected.  
 
  \section{Invariant Sasakian and 3-Sasakian structures}
 
 \setcounter{equation}{0} 
 
From Theorem \ref{maingeneral}, there are three families of orthogonal $K$-contact structures $(\xi^{\kappa}_{i},{\bf g}^{\kappa}_{i}),$ $i=1,2,3,$ on $T_{r}{\mathbb C}{\bf P}^{n},$ depending of a parameter $\kappa>0,$ and determined at $o_{H}$ by
$$
\begin{array}{lclclcl}
\xi^{\kappa}_{1}  =  \pm\frac{1}{\kappa}X, & & {\bf g}^{\kappa}_{1} & = & \kappa^{2}\langle\cdot,\cdot\rangle_{\mathfrak a} + \frac{\kappa}{2}\langle\cdot,\cdot\rangle_{{\mathfrak m}_{\varepsilon}\oplus{\mathfrak k}_{\varepsilon}} + \frac{\kappa}{4}\langle\cdot,\cdot\rangle_{{\mathfrak m}_{\varepsilon/2}\oplus{\mathfrak k}_{\varepsilon/2}},\\[0.4pc]

\xi^{\kappa}_{2}  =  \pm\frac{1}{\kappa}\mu_{\varepsilon}, & & {\bf g}^{\kappa}_{2} & = & \kappa^{2}\langle\cdot,\cdot\rangle_{{\mathfrak m}_{\varepsilon}} + \frac{\kappa}{2}\langle\cdot,\cdot\rangle_{{\mathfrak a}\oplus{\mathfrak k}_{\varepsilon}} + \frac{\kappa}{4}\langle\cdot,\cdot\rangle_{{\mathfrak m}_{\varepsilon/2}\oplus{\mathfrak k}_{\varepsilon/2}},\\[0.4pc]

\xi^{\kappa}_{3} = \pm\frac{1}{\kappa}\nu_{\varepsilon}, & &{\bf g}^{\kappa}_{3} & = & \kappa^{2}\langle\cdot,\cdot\rangle_{{\mathfrak k}_{\varepsilon}} + \frac{\kappa}{2}\langle\cdot,\cdot\rangle_{{\mathfrak a}\oplus {\mathfrak m}_{\varepsilon}} + \frac{\kappa}{4}\langle\cdot,\cdot\rangle_{{\mathfrak m}_{\varepsilon/2}\oplus{\mathfrak k}_{\varepsilon/2}}.
\end{array}
$$
They are, respectively, of Type AI, AII and AIII. Since $(\xi^{\kappa}_{1},{\bf g}^{\kappa}_{1})$ is Sasakian \cite[Theorem 1.1]{JC}, it follows from the last part of Theorem \ref{maingeneral} that $(\xi^{\kappa}_{2},{\bf g}^{\kappa}_{2})$ is also Sasakian. However, as can be seen in the following proposition, the procedure used in the proof of Theorem \ref{maingeneral} is not applicable when the contact metric structure $(\xi^{\kappa}_{3},{\bf g}^{\kappa}_{3})$ is considered.
 \begin{proposition} There is no isomorphism $L$ between the canonical infinitesimal models $(\overline{\mathfrak m},\widetilde{T}_{o_{H}},\widetilde{R}_{o_{H}},{\bf g}^{\kappa}_{1\,o_{H}})$ and $(\overline{\mathfrak m},\widetilde{T}_{o_{H}},\widetilde{R}_{o_{H}},{\bf g}^{\kappa}_{3\,o_{H}})$ of $T_{r}{\mathbb C}{\bf P}^{n} = G/H,$ for $n\geq 2,$ such that $L{\mathfrak a} = {\mathfrak k}_{\varepsilon}.$
\end{proposition}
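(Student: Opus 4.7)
The plan is to assume such an isomorphism $L$ exists and derive a contradiction. Since $L$ is a $({\bf g}^\kappa_1,{\bf g}^\kappa_3)$-linear isometry with $L{\mathfrak a}={\mathfrak k}_\varepsilon$, and both $X$ and $\nu_\varepsilon$ have length $\kappa$ in their respective metrics, one must have $LX=\epsilon\nu_\varepsilon$ for some $\epsilon\in\{\pm 1\}$. Writing $L\nu_\varepsilon = y_0 X + y_1\mu_\varepsilon + w$ with $w\in{\mathfrak m}_{\varepsilon/2}\oplus{\mathfrak k}_{\varepsilon/2}$ (the $\nu_\varepsilon$-component vanishes by ${\bf g}^\kappa_3$-orthogonality to $LX$), I would apply condition (i) of Lemma~\ref{lfirst} to the identities $[X,\mu_\varepsilon]_{\overline{\mathfrak m}}=-\nu_\varepsilon$ and $[X,\nu_\varepsilon]_{\overline{\mathfrak m}}=\mu_\varepsilon$ from (\ref{brackTCP1}); comparing ${\bf g}^\kappa_3$-norms on both sides forces $w=0$, so that $L\nu_\varepsilon,L\mu_\varepsilon\in{\mathfrak a}\oplus{\mathfrak m}_\varepsilon$. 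By orthogonal complement, $L$ preserves the splitting $\overline{\mathfrak m}=\overline{\mathfrak n}\oplus({\mathfrak m}_{\varepsilon/2}\oplus{\mathfrak k}_{\varepsilon/2})$, $L|_{\overline{\mathfrak n}}$ is determined (up to $\epsilon$ and $y_0^2+y_1^2=1$) by the $\mathfrak{su}(2)$-like bracket relations on $\{X,\mu_\varepsilon,\nu_\varepsilon\}$, and $\phi:=L|_{{\mathfrak m}_{\varepsilon/2}\oplus{\mathfrak k}_{\varepsilon/2}}$ is a $\langle\cdot,\cdot\rangle$-orthogonal endomorphism of that $4(n-1)$-dimensional subspace.

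Next I would translate condition (i) on pairs in ${\mathfrak m}_{\varepsilon/2}\oplus{\mathfrak k}_{\varepsilon/2}$ into transformation laws for the three $2$-forms $\omega_X,\omega_{\mu_\varepsilon},\omega_{\nu_\varepsilon}$ defined by $[u,v]_{\overline{\mathfrak m}}=\omega_X(u,v)X+\omega_{\mu_\varepsilon}(u,v)\mu_\varepsilon+\omega_{\nu_\varepsilon}(u,v)\nu_\varepsilon$. A direct computation from (\ref{brackTCP1}) reveals a structural dichotomy: $\omega_X$ and $\omega_{\mu_\varepsilon}$ are ``cross'' forms, supported on ${\mathfrak m}_{\varepsilon/2}\times{\mathfrak k}_{\varepsilon/2}$, while $\omega_{\nu_\varepsilon}$ is ``internal,'' supported on ${\mathfrak m}_{\varepsilon/2}\times{\mathfrak m}_{\varepsilon/2}$ and ${\mathfrak k}_{\varepsilon/2}\times{\mathfrak k}_{\varepsilon/2}$. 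The hypothesis $LX=\epsilon\nu_\varepsilon$ combined with $L\nu_\varepsilon\in{\mathfrak a}\oplus{\mathfrak m}_\varepsilon$ then forces $\phi^*\omega_{\nu_\varepsilon}=\epsilon\omega_X$, so that $\phi$ must convert an internal symplectic pairing into a cross one --- a strong mixing between ${\mathfrak m}_{\varepsilon/2}$ and ${\mathfrak k}_{\varepsilon/2}$.

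The main obstacle, and heart of the proof, is to rule this out using condition (ii). The plan is to apply (ii) to each pair $(\mu^{j,0}_{\varepsilon/2},\mu^{j,1}_{\varepsilon/2})$, $j=1,\dots,n-1$; by (\ref{brackTCP1}) the $\mathfrak h$-component is $h_j=\tfrac14(A_{12}+2A_{23})+\tfrac12 A_{3,j+2}$ (with $A_{3,3}=0$). Via Lemma~\ref{hbrackets}, $\mathrm{ad}_{A_{12}+2A_{23}}$ acts with an eigenvalue factor of $3$ on the 2-plane indexed by $j=1$ and $1$ elsewhere, while $\mathrm{ad}_{A_{3,j+2}}$ for $j\geq 2$ adds a further $j$-dependent perturbation that refines the eigenspace decomposition differently for each $j$. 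The intertwining $\phi\circ\mathrm{ad}_{h_j}=\mathrm{ad}_{[\phi\mu^{j,0}_{\varepsilon/2},\phi\mu^{j,1}_{\varepsilon/2}]_{\mathfrak h}}\circ\phi$ for every $j$ would force $\phi$ to respect these $n-1$ distinct eigenspace decompositions of ${\mathfrak m}_{\varepsilon/2}$ simultaneously, which (given $n\geq 2$) is algebraically incompatible with the cross-to-internal mixing of ${\mathfrak m}_{\varepsilon/2}$ and ${\mathfrak k}_{\varepsilon/2}$ demanded by $\phi^*\omega_{\nu_\varepsilon}=\epsilon\omega_X$. The hardest step will be making this incompatibility explicit: it will likely involve showing that any $\phi$ solving the system of intertwining identities automatically preserves each ${\mathfrak m}_{\varepsilon/2}$-summand (or at worst pairs it with its ${\mathfrak k}_{\varepsilon/2}$-analogue in a rigid way), and deducing that such a $\phi$ cannot simultaneously realize the non-degenerate swap of ``internal'' and ``cross'' 2-forms, thereby producing the required contradiction.
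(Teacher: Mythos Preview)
Your proposal is headed in a workable direction but leaves the decisive step unfinished, and it misses the structural shortcut that the paper uses to make the whole argument two paragraphs long.

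The key observation you are not exploiting is this: condition (ii) of Lemma \ref{lfirst}, together with Lemma \ref{lh} (which says that ${\mathfrak h}$ is spanned by the ${\mathfrak h}$-components $[u,v]_{\mathfrak h}$ with $u,v\in\overline{\mathfrak m}$), forces $L$ to commute with ${\rm ad}_{u}$ for \emph{every} $u\in{\mathfrak h}$. Since $H={\rm S}({\rm U}(1)\times{\rm U}(n-1))$ is connected, $L$ is therefore ${\rm Ad}(H)$-equivariant. Now the ${\rm Ad}(H)$-irreducibility of the decomposition $\overline{\mathfrak m}={\mathfrak a}\oplus{\mathfrak m}_{\varepsilon}\oplus{\mathfrak k}_{\varepsilon}\oplus{\mathfrak m}_{\varepsilon/2}\oplus{\mathfrak k}_{\varepsilon/2}$ immediately tells you that $L$ must send ${\mathfrak m}_{\varepsilon/2}$ to one of the irreducible pieces of matching dimension, so either $L{\mathfrak m}_{\varepsilon/2}={\mathfrak m}_{\varepsilon/2}$ or $L{\mathfrak m}_{\varepsilon/2}={\mathfrak k}_{\varepsilon/2}$. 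This replaces, in one stroke, both your first-step computation (showing $L\overline{\mathfrak n}=\overline{\mathfrak n}$) and the entire ``main obstacle'' you identify (constraining $\phi$ via multiple intertwining identities). Once you have this dichotomy, the paper finishes each case with a single application of condition (i) and Lemma \ref{lbrack1}: in the swap case, $[{\mathfrak m}_{\varepsilon/2},{\mathfrak m}_{\varepsilon/2}]_{\overline{\mathfrak m}}\subset{\mathfrak k}_{\varepsilon}$ forces $L{\mathfrak k}_{\varepsilon}={\mathfrak k}_{\varepsilon}$, contradicting $L{\mathfrak a}={\mathfrak k}_{\varepsilon}$; in the preserve case, $[{\mathfrak a},{\mathfrak m}_{\varepsilon/2}]\subset{\mathfrak k}_{\varepsilon/2}$ versus $[{\mathfrak k}_{\varepsilon},{\mathfrak m}_{\varepsilon/2}]\subset{\mathfrak m}_{\varepsilon/2}$ forces $[{\mathfrak a},{\mathfrak m}_{\varepsilon/2}]=0$, which is false for $n\geq 2$.

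Your own route --- analysing the three $2$-forms $\omega_{X},\omega_{\mu_{\varepsilon}},\omega_{\nu_{\varepsilon}}$ and then trying to pin down $\phi$ via the family of intertwiners ${\rm ad}_{h_{j}}$ --- is not wrong in spirit, and your ``cross versus internal'' dichotomy is precisely the shadow of the two cases above. But as written it is incomplete: you admit the ``hardest step'' is still to be done, and the mechanism you sketch (simultaneous eigenspace constraints for varying $j$) would require substantial case analysis to rule out all possible orthogonal $\phi$. The paper avoids all of this by recognising that condition (ii) already encodes full ${\rm Ad}(H)$-equivariance.
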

\begin{proof} From the condition (\ref{eii}) and taking into account that $H= S(U(1)\times U(n-1))$ is connected, it follows that any isomorphism $L$ of infinitesimal models on $T_{r}{\mathbb C}{\bf P}^{n}$ is ${\rm Ad}(H)$-invariant. Hence, from the irreducibility of the decomposition $\overline{\mathfrak m} ={\mathfrak a}\oplus{\mathfrak m}_{\varepsilon}\oplus{\mathfrak k}_{\varepsilon} \oplus{\mathfrak m}_{\varepsilon/2}\oplus{\mathfrak k}_{\varepsilon/2},$ either $L {\mathfrak m}_{\varepsilon/2} = {\mathfrak m}_{\varepsilon/2}$ or $L{\mathfrak m}_{\varepsilon/2} = {\mathfrak k}_{\varepsilon/2}.$ Proceeding by reductio ad absurdum, suppose that there exists an isomorphism $L$ such that $L{\mathfrak a} = {\mathfrak k}_{\varepsilon}.$ Then, if $L{\mathfrak m}_{\varepsilon/2} = {\mathfrak k}_{\varepsilon/2}$ and so $L{\mathfrak k}_{\varepsilon/2} = {\mathfrak m}_{\varepsilon/2},$ we have from Lemma \ref{lbrack1} that
\[
L[{\mathfrak m}_{\varepsilon/2},{\mathfrak m}_{\varepsilon/2}]_{\overline{\mathfrak m}}\subset L{\mathfrak k}_{\varepsilon},\quad [L{\mathfrak m}_{\varepsilon/2}, L{\mathfrak m}_{\varepsilon/2}]\subset {\mathfrak k}_{\varepsilon}.
\]
Hence, using (i) in Lemma \ref{lfirst} and that $[{\mathfrak m}_{\varepsilon/2},{\mathfrak m}_{\varepsilon/2}]_{\overline{\mathfrak m}}\neq 0$ by (\ref{brackTCP1}), $L{\mathfrak k}_{\varepsilon} = {\mathfrak k}_{\varepsilon}.$ But it is absurd since by hypothesis $L{\mathfrak a} = {\mathfrak k}_{\varepsilon}.$ 

If $L{\mathfrak m}_{\varepsilon/2} = {\mathfrak m}_{\varepsilon/2}$ and so $L{\mathfrak k}_{\varepsilon/2} = {\mathfrak k}_{\varepsilon/2},$ then using again Lemma \ref{lbrack1},
\[
L[{\mathfrak a},{\mathfrak m}_{\varepsilon/2}]_{\overline{\mathfrak m}}\subset L{\mathfrak k}_{\varepsilon/2} = {\mathfrak k}_{\varepsilon/2},\quad [L{\mathfrak a},L{\mathfrak m}_{\varepsilon/2}]_{\overline{\mathfrak m}} = [{\mathfrak k}_{\varepsilon},{\mathfrak m}_{\varepsilon/2}]_{\overline{\mathfrak m}}\subset {\mathfrak m}_{\varepsilon/2}.
\]
Therefore, it must happen that $[{\mathfrak a},{\mathfrak m}_{\varepsilon/2}]_{\overline{\mathfrak m}}= [{\mathfrak k}_{\varepsilon},{\mathfrak m}_{\varepsilon/2}]_{\overline{\mathfrak m}}=0,$ which is absurd because $n\geq 2.$

\end{proof}

\begin{theorem}\label{t1} The $K$-contact structures $(\xi^{\kappa}_{i},{\bf g}^{\kappa}_{i})$ on $T_{r}{\mathbb C}{\bf P}^{n},$ for $i = 1,2,3$ and $\kappa >0,$ are Sasakian. Moreover, the tangent sphere bundle $T_{r}G/K$ of a compact rank-one symmetric space $G/K$ admits an $G$-invariant $3$-Sasakian structure if and only if $G/K= {\mathbb C}{\bf P}^{n},$ for some $n\geq 1.$ Then there exists a unique orthogonal $3$-Sasakian metric ${\bf g}$ on $T_{r}{\mathbb C}{\bf P}^{n},$ given at $o_{H}$ by
 \begin{equation}\label{ff}
{\bf g}_{o_{H}} = \frac{1}{4}\langle\cdot,\cdot\rangle_{\overline{\mathfrak n}} + \frac{1}{8}\langle\cdot,\cdot\rangle_{{\mathfrak m}_{\varepsilon/2}\oplus{\mathfrak k}_{\varepsilon/2}}.
 \end{equation}
  \end{theorem}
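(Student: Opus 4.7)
My plan is to address the three assertions of the theorem in sequence, leveraging the structural results already established in the excerpt.

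For the Sasakianity claim, the case $i=1$ is \cite[Theorem 1.1]{JC}. For $i=2$, the linear map $L\colon \overline{\mathfrak m}\to \overline{\mathfrak m}$ in (\ref{L}), which the proof of Theorem \ref{maingeneral} has already shown to be an isomorphism of canonical infinitesimal models, additionally sends $\xi^{\kappa}_{1\,o_H}$ to $\xi^{\kappa}_{2\,o_H}$ (up to sign) and intertwines the corresponding fundamental $(1,1)$-tensors at $o_H$, so Lemma \ref{lfirst} supplies an isomorphism of almost contact metric manifolds and Sasakianity transfers. For $i=3$ the preceding proposition blocks this shortcut, so I would verify the Sasakian identity (\ref{Sasakian}) directly at the origin: specialize formula (\ref{varphi}) and the explicit list that follows it to $\xi_{o_H} = \pm \nu_\varepsilon/\kappa$ and to the coefficients of ${\bf g}^\kappa_3$ in order to extract $\varphi^\kappa_3$, compute the Levi-Civita bilinear form $\alpha^\kappa_3$ from (\ref{nabla}), (\ref{U}) and the component values (\ref{UCP}), and then test $(\nabla_u \varphi^\kappa_3)v = {\bf g}^\kappa_3(u,v)\,\xi^\kappa_3 - \eta^\kappa_3(v)\,u$ on pairs $u,v$ from the basis $\{X,\mu_\varepsilon,\nu_\varepsilon,\mu^{j,a}_{\varepsilon/2},\nu^{j,a}_{\varepsilon/2}\}$ of $\overline{\mathfrak m}$; the bracket table (\ref{brackTCP1}) and Lemma \ref{hbrackets} supply all the remaining structural data.

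For the classification statement, any three characteristic vector fields of an invariant 3-Sasakian structure are linearly independent $G$-invariant vector fields, hence correspond to three linearly independent ${\rm Ad}(H)$-invariant vectors in $\overline{\mathfrak m}$. The dimension argument in the proof of Theorem \ref{maingeneral1} shows $\dim {\rm Inv}(\overline{\mathfrak m}) = 1$ unless $G/K = {\mathbb C}{\bf P}^n$, which gives necessity. For sufficiency and the explicit form, I would take the metric ${\bf g}$ defined by (\ref{ff}) together with the three $G$-invariant vector fields determined by $(\xi_1)_{o_H} = 2X$, $(\xi_2)_{o_H} = 2\mu_\varepsilon$ and $(\xi_3)_{o_H} = -2\nu_\varepsilon$, where the sign on $\nu_\varepsilon$ is chosen so that the commutator relations have the correct orientation. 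The brackets in (\ref{brackTCP1}) immediately give $[\xi_i,\xi_j] = 2\varepsilon_{ijk}\xi_k$, and orthonormality under ${\bf g}$ is direct from (\ref{ff}). Observing that ${\bf g}$ coincides with ${\bf g}^{1/2}_i$ for every $i$ and that $\xi_i$ agrees with $\xi^{1/2}_i$ up to sign, Sasakianity of each pair $(\xi_i,{\bf g})$ is exactly the first part of the theorem specialized to $\kappa = 1/2$.

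For the uniqueness of the orthogonal 3-Sasakian metric, I would analyze the three Sasakian structures supported by a hypothetical such metric via Table II. Each $\xi_i$ must be of type AI, AII, AIII, BI, BII, BIII or C. Linear independence rules out that all three share the same type-A subtype; if any $\xi_i$ is of type B or C, the $K$-contact column of Table II already fixes the coefficients $(a_0,a_\varepsilon,b_\varepsilon,a_{\varepsilon/2},b_{\varepsilon/2})$ to $(1/4,1/4,1/4,1/8,1/8)$, i.e.\ to (\ref{ff}); otherwise the triple exhausts the three subtypes AI, AII and AIII, and matching the coefficient rows of Table II across any two distinct type-A subtypes yields $\kappa^2 = \kappa/2$, hence $\kappa = 1/2$, again producing (\ref{ff}). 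The principal obstacle throughout is the direct Sasakian check for type AIII at arbitrary $\kappa$: the proposition just proved rules out the canonical-infinitesimal-model approach that dispatches type AII, and one is forced to evaluate $(\nabla \varphi^\kappa_3)$ component by component on the full basis of $\overline{\mathfrak m}$.
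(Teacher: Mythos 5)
Your proposal is correct and follows the same overall architecture as the paper: $i=1$ by citation of \cite[Theorem 1.1]{JC}, $i=2$ via the isomorphism $L$ of (\ref{L}) through Lemma \ref{lfirst}, $i=3$ by a direct computation at the origin, necessity of $G/K={\mathbb C}{\bf P}^{n}$ from $\dim{\rm Inv}(\overline{\mathfrak m})=1$ in the remaining cases, existence via the explicit triple at $\kappa=\tfrac12$ (your sign convention $\xi_{2\,o_{H}}=2\mu_{\varepsilon}$, $\xi_{3\,o_{H}}=-2\nu_{\varepsilon}$ is a legitimate variant of the paper's ordering $\xi_{2\,o_{H}}=2\nu_{\varepsilon}$, $\xi_{3\,o_{H}}=2\mu_{\varepsilon}$; both satisfy $[\xi_{i},\xi_{j}]=2\varepsilon_{ijk}\xi_{k}$), and uniqueness from Table II. The one genuinely different step is the crux, the Type AIII case: the paper does not verify the covariant identity (\ref{Sasakian}) but instead proves \emph{normality}, computing the invariant tensor ${\bf N}_{3}=[\varphi_{3},\varphi_{3}]+2d\eta^{\kappa}_{3}\otimes\xi^{\kappa}_{3}$ purely from the bracket table (\ref{brackTCP1}) via the algebraic formula (\ref{N}); the key observation there is that $\varphi_{3}$ coincides with a rescaled ${\rm ad}_{\nu_{\varepsilon}}$ on each root space, whence ${\rm ad}_{\nu_{\varepsilon}}\circ\varphi_{3}=\varphi_{3}\circ{\rm ad}_{\nu_{\varepsilon}}$ and the symmetries (\ref{NN}) reduce the check to a handful of components. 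Your route through $(\nabla_{u}\varphi_{3})v={\bf g}^{\kappa}_{3}(u,v)\xi^{\kappa}_{3}-\eta^{\kappa}_{3}(v)u$ is equally valid (the identity is an if-and-only-if characterization, and checking it at $o_{H}$ suffices by invariance), but it requires the Levi-Civita data $\alpha_{3}$, which the paper only computes later for Theorem \ref{t2}; the normality check buys independence from the connection and a shorter component list, while your check reuses computations needed downstream anyway. Your uniqueness argument via the $K$-contact column of Table II is more detailed than the paper's one-line appeal to Theorem \ref{maingeneral}, and it is sound: any characteristic field of Type B or C forces $(a_{0},a_{\varepsilon},b_{\varepsilon},a_{\varepsilon/2},b_{\varepsilon/2})=(\tfrac14,\tfrac14,\tfrac14,\tfrac18,\tfrac18)$, and matching two distinct Type A rows (the $b_{\varepsilon}$ entries force equal parameters, then $\kappa^{2}=\kappa/2$) gives $\kappa=\tfrac12$.
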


\begin{remark}{\rm From Remark \ref{r1}, note that $T_{r}{\mathbb S}^{2}$ and $T_{r}{\mathbb R}{\bf P}^{2},$ as particular cases of Theorem \ref{t1}, admit an invariant $3$-Sasakian structure.}
\end{remark}

\vspace{0.1cm}

\noindent{\bf Proof of Theorem \ref{t1}.} First, by a direct procedure, we will prove that $(\xi_{3}^{\kappa} = \pm\frac{1}{\kappa}\nu_{\varepsilon},{\bf g}^{\kappa}_{3})$ is normal. From (\ref{varphi}), the $(1,1)$-tensor field $\varphi_{3}$ of the corresponding almost contact metric structure is determined by
 \begin{equation}\label{3}
\begin{array}{l}
\varphi_{3}X = -\mu_{\varepsilon},\quad \varphi_{3}\mu_{\varepsilon} = X,\quad \varphi_{3}\nu_{\varepsilon} = 0,\\[0.4pc]
\varphi_{3}\mu^{j,a}_{\varepsilon/2} = (-1)^{a+1}\mu^{j,a+1}_{\varepsilon/2},\quad \varphi_{3}\nu^{j,a}_{\varepsilon/2} = (-1)^{a}\nu^{j,a+1}_{\varepsilon/2},
\end{array}
\end{equation}
for all $j = 1,\dots,n-1;$ $a = 0,1,$ and, using (\ref{brackTCP1}), also as
\[
\varphi_{{3}_{\mid {\mathfrak m}_{\lambda} \oplus{\mathfrak k}_{\lambda}} }= \frac{1}{\lambda_{\mathbb R}(X)}{\rm ad}_{\nu_{\varepsilon}},\quad \varphi_{3}X = {\rm ad}_{\nu_{\varepsilon}}X,
\]
where $\lambda\in \Sigma^{+} = \{\varepsilon,\varepsilon/2\}.$ Then, since the subspaces $\overline{\mathfrak n}$ and ${\mathfrak m}_{\varepsilon/2}\oplus {\mathfrak k}_{\varepsilon/2}$ of $\overline{\mathfrak m}$ are ${\rm ad}_{\nu_{\varepsilon}}$-invariant, it follows that 
\begin{equation}\label{comm}
{\rm ad}_{\nu_{\varepsilon}}\circ\varphi_{3} = \varphi_{3}\circ {\rm ad}_{\nu_{\varepsilon}}.
\end{equation}
 On the other hand, taking into account that $2d\eta^{\kappa}_{3}(u,v) = -\eta^{\kappa}_{3}([u,v]_{\overline{\mathfrak m}}),$ for all $u,v\in \overline{\mathfrak m},$ where $\eta^{\kappa}_{3}= {\bf g}^{\kappa}_{3}(\xi^{\kappa}_{3},\cdot),$ we get that the (skew-symmetric) $G$-invariant tensor field ${\bf N}_{3}= [\varphi_{3},\varphi_{3}] + 2d\eta^{\kappa}_{3}\otimes\xi^{\kappa}_{3}$ is determined by
\begin{equation}\label{N}
{\bf N}_{3}(u,v) = -[u,v]_{\overline{\mathfrak m}} + [\varphi_{3}u,\varphi_{3}v]_{\overline{\mathfrak m}}- \varphi_{3}[\varphi_{3}u,v]_{\overline{\mathfrak m}} - \varphi_{3}[u,\varphi_{3}v]_{\overline{\mathfrak m}}.
\end{equation}
From (\ref{comm}) and using that the orthogonal complement $(\xi_{3}^{\kappa})^{\bot}$ in $\overline{\mathfrak m}$ of $\xi_{3}^{\kappa}$ is ${\rm ad}_{\nu_{\varepsilon}}$-preserving, it follows  that ${\bf N}_{3}(\xi^{\kappa}_{3},\cdot) = 0.$ Moreover, from (\ref{N}),
\[
{\bf N}_{3}(\varphi_{3}u,v) = {\bf N}_{3}(u,\varphi_{3}v), \quad u,v\in (\xi^{\kappa}_{3})^{\bot},
\]
and so, ${\bf N}_{3}(u,\varphi_{3}u) = 0.$ Then, applying (\ref{3}), we have ${\bf N}_{3}(X,\mu_{\varepsilon}) = {\bf N}_{3}(\mu^{j,a}_{\varepsilon/2},\mu^{j,a+1}_{\varepsilon/2}) = {\bf N}_{2}(\nu^{j,a}_{\varepsilon/2},\nu^{j,a+1}_{\varepsilon/2}) = 0$ and
\begin{equation}\label{NN}
\begin{array}{lcl}
{\bf N}_{3}(X,\mu^{j,a}_{\varepsilon/2}) & = & (-1)^{a+1}{\bf N}_{3}(\mu_{\varepsilon},\mu^{j,a+1}_{\varepsilon/2}),\\[0.4pc]
{\bf N}_{3}(X,\nu^{j,a}_{\varepsilon/2}) &  = & (-1)^{a}{\bf N}_{3}(\mu_{\varepsilon},\nu^{j,a+1}_{\varepsilon/2}),\\[0.4pc]
{\bf N}_{3}(\mu^{j,a}_{\varepsilon/2},\mu^{k,b}_{\varepsilon/2}) & = & (-1)^{a+b+1}{\bf N}_{3}(\mu^{j,a+1}_{\varepsilon/2},\mu^{k,b+1}_{\varepsilon/2}),\\[0.4pc]
{\bf N}_{3}(\nu^{j,a}_{\varepsilon/2},\nu^{k,b}_{\varepsilon/2})&  = & (-1)^{a+b+1}{\bf N}_{3}(\nu^{j,a+1}_{\varepsilon/2},\nu^{k,b+1}_{\varepsilon/2}),\\[0.4pc]
 {\bf N}_{3}(\mu^{j,a}_{\varepsilon/2},\nu^{k,b}_{\varepsilon/2}) & = & (-1)^{a+b}{\bf N}_{3}(\mu^{j,a+1}_{\varepsilon/2},\nu^{k,b+1}_{\varepsilon/2}),
\end{array}
\end{equation}
for all $j,k=1,\dots, n-1$ and $a,b= 0,1.$ Using  (\ref{brackTCP1}), we have for $j\neq k,$
\[
[\mu^{j,a}_{\varepsilon/2},\nu^{k,b}_{\varepsilon}] = [\mu^{j,a}_{\varepsilon/2},\mu^{k,b}_{\varepsilon/2}]_{\overline{\mathfrak m}} = [\nu^{j,a}_{\varepsilon/2},\nu^{k,b}_{\varepsilon/2}]_{\overline{\mathfrak m}} = 0.
\]
Hence, from (\ref{3}), (\ref{N}) and (\ref{NN}), we get
\[
{\bf N}_{3}(\mu^{j,a}_{\varepsilon/2},\mu^{k,b}_{\varepsilon/2}) = {\bf N}_{3}(\nu^{j,a}_{\varepsilon/2},\nu^{k,b}_{\varepsilon/2})=  0,
\]
for all $a,b=0,1,$ and $j,k= 1,\dots, n-1.$ Moreover, for $j\neq k,$ we obtain ${\bf N}_{3}(\mu^{j,a}_{\varepsilon/2},\nu^{k,b}_{\varepsilon/2}) = 0.$ Then we conclude that ${\bf N}_{3}$ is identically null if and only if
\[
{\bf N}_{3}(X,\mu^{j,a}_{\varepsilon/2}) = {\bf N}_{3}(X,\nu^{j,a}_{\varepsilon/2}) = {\bf N}_{3}(\mu^{j,a}_{\varepsilon/2},\nu^{j,b}_{\varepsilon/2}) = 0,\quad a\leq b,
\]
 which is satisfied by direct calculation using (\ref{brackTCP1}) and (\ref{3}) in (\ref{N}). This proves that $(\xi_{3}^{\kappa} = \pm\frac{1}{\kappa},{\bf g}^{\kappa}_{3})$ is also Sasakian. 

For $\kappa = \frac{1}{2},$ the three Sasakian metrics ${\bf g}^{\kappa}_{i},$ $i = 1,2,3,$ coincide with the metric ${\bf g}$ determined in (\ref{ff}) and clearly $(\{\xi_{1},\xi_{2},\xi_{3}\},{\bf g})$ is a $G$-invariant $3$-Sasakian structure on $T_{r}{\mathbb C}{\bf P}^{n},$ where $\xi_{1\,{o_{H}}}=2X,$ $\xi_{2\,{o_{H}}}=2\nu_{\varepsilon},$ and $\xi_{3\,{o_{H}}}= 2\mu_{\varepsilon}.$ From Theorem \ref{maingeneral}, ${\bf g}$ is the unique $G$-invariant orthogonal $3$-Sasakian metric on tangent sphere bundles of compact rank-one symmetric spaces, thus proving the result.

\begin{theorem}\label{t2}The $3$-Sasakian metric ${\bf g}$ is the unique orthogonal Sasakian-Einstein metric on $T_{r}{\mathbb C}{\bf P}^{n}.$  
 \end{theorem}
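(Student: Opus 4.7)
The plan is to establish existence and uniqueness separately. Existence is immediate from Theorem~\ref{t1}: the metric ${\bf g}$ in (\ref{ff}) is $3$-Sasakian, hence Sasakian--Einstein (as recalled at the end of Section~\ref{Preliminares}).

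For uniqueness, Theorem~\ref{t1} shows that every orthogonal Sasakian metric on $T_{r}{\mathbb C}{\bf P}^{n}$ belongs to one of the three one-parameter families $(\xi_{i}^{\kappa},{\bf g}_{i}^{\kappa})$, $i\in\{1,2,3\}$, $\kappa>0$. The isomorphism between Types AI and AII given in Theorem~\ref{maingeneral} reduces the problem to the families ${\bf g}_{1}^{\kappa}$ and ${\bf g}_{3}^{\kappa}$; since ${\bf g}_{1}^{1/2}={\bf g}_{3}^{1/2}={\bf g}$, it suffices to show that each of these is Einstein precisely when $\kappa=1/2$.

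Fix, say, the Type AI family ${\bf g}_{1}^{\kappa}$ with $\xi_{1}^{\kappa}=\pm X/\kappa$ (the Type AIII case is treated in the same way). Compute its Ricci tensor at $o_{H}$ directly from (\ref{nabla}), (\ref{U}) and (\ref{R}), applied to the orthonormal basis $\{X,\mu_{\varepsilon},\nu_{\varepsilon},\mu^{j,a}_{\varepsilon/2},\nu^{j,a}_{\varepsilon/2}\}$ of $\overline{\mathfrak m}$, with the brackets (\ref{brackTCP1}) and the $\mathfrak h$-action from Lemma~\ref{hbrackets}. The ${\rm Ad}(H)$-invariance of ${\rm Ric}$, together with the standard Sasakian identity ${\rm Ric}(\xi_{1}^{\kappa},\cdot)=(4n-2)\eta_{1}^{\kappa}$ and the fact that on any Sasakian manifold the Ricci operator commutes with $\varphi_{1}^{\kappa}$, reduces ${\rm Ric}$ at $o_{H}$ to a small number of scalar parameters depending on $\kappa$ and fixes the Einstein constant to $\lambda=4n-2$.

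The Einstein condition ${\rm Ric}=\lambda\,{\bf g}_{1}^{\kappa}$ then becomes a system of polynomial equations in the single unknown $\kappa>0$. Direct substitution shows that $\kappa=1/2$ is a solution (consistent with ${\bf g}_{1}^{1/2}={\bf g}$ being $3$-Sasakian, hence Sasakian--Einstein), and the low-degree polynomials produced by the Ricci computation admit no other positive root. The analogous analysis for the Type AIII family yields the same unique value $\kappa=1/2$, completing uniqueness. The principal obstacle is purely computational: correctly assembling the full Ricci tensor from the many bracket identities in (\ref{brackTCP1}) and Lemma~\ref{hbrackets}.
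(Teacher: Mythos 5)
Your proposal is correct and follows essentially the same route as the paper: restrict to the families ${\bf g}_{i}^{\kappa}$ via Theorem~\ref{t1}, dispose of Type AII by the AI--AII isomorphism of Theorem~\ref{maingeneral}, and use the forced Einstein constant $\lambda = 2(2n-1)$ of a $(4n-1)$-dimensional Sasakian--Einstein manifold to pin down $\kappa = \frac{1}{2}$. The only difference is one of economy: the paper evaluates a single well-chosen Ricci component per family, namely $\mathrm{Ric}_{1}(\mu_{\varepsilon},\mu_{\varepsilon}) = \mathrm{Ric}_{3}(X,X) = n-\kappa$, which together with the forced value $(2n-1)\kappa$ yields a linear equation in $\kappa$, rather than assembling the full Ricci tensor as you propose.
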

\begin{proof} From Theorem \ref{t1}, all orthogonal Sasakian metrics on $T_{r}{\mathbb C}{\bf P}^{n}$ are of the form ${\bf g}^{\kappa}_{i},$ for $i = 1,2,3$ and $\kappa>0.$ 

Let $\alpha_{i}\colon \overline{\mathfrak m}\times\overline{\mathfrak m}\to \overline{\mathfrak m}$ be the ${\rm Ad}(H)$-invariant bilinear function that determines the Levi-Civita connection of ${\bf g}^{\kappa}_{i}.$ Then, applying (\ref{brackTCP1}) and (\ref{UCP}) in (\ref{nabla}), we have that the non-vanishing components of $\alpha_{1}$ and $\alpha_{3}$ are given by
 $$
\begin{array}{l}
\alpha_{1}(X,\mu_{\varepsilon}) = (\kappa -1)\nu_{\varepsilon},\quad \alpha_{1}(\mu_{\varepsilon},X) = \kappa\nu_{\varepsilon},\quad \alpha_{1}(X,\nu_{\varepsilon}) = -\alpha_{3}(\nu_{\varepsilon},X) =(1-\kappa)\mu_{\varepsilon},\\[0.4pc]

 \alpha_{1}(\nu_{\varepsilon},X) = -\alpha_{3}(X,\nu_{\varepsilon}) = -\kappa\xi_{\varepsilon},\quad \alpha_{3}(X,\mu_{\varepsilon}) = -\alpha_{3}(\mu_{\varepsilon},X) = -\frac{1}{2}\nu_{\varepsilon},\\[0.4pc]

\alpha_{1}(X,\mu^{j,a}_{\varepsilon/2}) = \frac{(2\kappa -1)}{2}\mu^{j,a}_{\varepsilon/2},\quad \alpha_{1}(\mu^{j,a}_{\varepsilon/2},X) = \kappa\nu^{j,a}_{\varepsilon/2},\quad
\alpha_{1}(X,\nu^{j,a}_{\varepsilon/2}) = \frac{1-2\kappa}{2}\mu^{j,a}_{\varepsilon/2},\\[0.4pc]
 
\alpha_{1}(\nu^{j,a}_{\varepsilon/2},X) =-\kappa\mu^{j,a}_{\varepsilon/2},\quad \alpha_{1}(\mu_{\varepsilon},\nu_{\varepsilon}) = -\alpha_{1}(\nu_{\varepsilon},\mu_{\varepsilon}) = -\frac{1}{2}X,\\[0.4pc]

\alpha_{1} (\mu^{j,a}_{\varepsilon/2},\mu_{\varepsilon}) = \alpha_{1}(\nu^{j,a}_{\varepsilon/2},\nu_{\varepsilon}) = \alpha_{3}(\mu^{j,a}_{\varepsilon/2},\mu_{\varepsilon}) = \frac{(-1)^{a+1}}{2}\nu_{\varepsilon/2}^{j,a+1},\\[0.4pc]

\alpha_{1}(\mu^{j,a}_{\varepsilon/2},\nu_{\varepsilon}) = -\alpha_{1}(\nu^{j,a}_{\varepsilon/2},\mu_{\varepsilon}) = -\alpha_{3}(\nu^{j,a}_{\varepsilon/2},\mu_{\varepsilon}) = \frac{(-1)^{a}}{2}\mu^{j,a+1}_{\varepsilon/2},\\[0.4pc]

 \alpha_{3}(\mu^{j,a}_{\varepsilon/2},X) = \frac{1}{2}\nu^{j,a}_{\varepsilon/2},\quad \alpha_{3}(\nu^{j,a}_{\varepsilon/2},X) =-\frac{1}{2}\mu^{j,a}_{\varepsilon/2},\;\;\alpha_{3}(\mu_{\varepsilon},\nu_{\varepsilon}) = -\kappa X,\quad \alpha_{3}(\nu_{\varepsilon},\mu_{\varepsilon}) = (1-\kappa)X,\\[0.4pc]

 \alpha_{3}(\nu^{j,a}_{\varepsilon/2},\nu_{\varepsilon}) = (-1)^{a+1}\kappa\nu_{\varepsilon/2}^{j,a+1},\quad \alpha_{3}(\mu^{j,a}_{\varepsilon/2},\nu_{\varepsilon}) = (-1)^{a}\kappa\mu^{j,a+1}_{\varepsilon/2},\\[0.4pc]

\alpha_{3}(\nu_{\varepsilon},\mu^{j,a}_{\varepsilon/2}) = (-1)^{a}(\kappa -1)\mu^{j,a+1}_{\varepsilon/2},\quad \alpha_{3}(\nu_{\varepsilon},\nu^{j,a}_{\varepsilon/2}) = (-1)^{a}\frac{1-2\kappa}{2}\nu^{j,a+1}_{\varepsilon/2},
\end{array}
$$
for all $j= 1,\dots, n-1,$ together, for $i=1,2,3.$ with 
$$
\begin{array}{l}
\alpha_{i}(\mu^{j,0}_{\varepsilon/2},\mu^{j,1}_{\varepsilon/2}) = -\alpha_{i}(\nu^{j,0}_{\varepsilon/2},\nu^{j,1}_{\varepsilon/2}) = -\frac{1}{4}\nu_{\varepsilon},\\[0.4pc]

\alpha_{i}(\mu^{j,0}_{\varepsilon/2},\nu^{j,1}_{\varepsilon/2}) = -\alpha_{i}(\mu^{j,1}_{\varepsilon/2},\nu^{j,0}_{\varepsilon/2}) = -\alpha_{i}(\nu^{j,0}_{\varepsilon/2},\mu^{j,1}_{\varepsilon/2}) = \alpha_{i}(\nu^{j,1}_{\varepsilon/2},\mu^{j,0}_{\varepsilon/2}) = \frac{1}{4}\mu_{\varepsilon}.
\end{array}
$$

Denote by $R_{i}$ the Riemannian curvature, expressed as $(0,4)$-tensor field, and the Ricci curvature ${\rm Ric}_{i}$ of $(T_{r}{\mathbb C}{\bf P}^{n},{\bf g}^{\kappa}_{i}).$ Then, using (\ref{R}) and (\ref{brackTCP1}), we get 
$$
\begin{array}{lclcl}
R_{1}(\mu_{\varepsilon},X,\mu_{\varepsilon},X) & = & R_{3}(X,\nu_{\varepsilon},X,\nu_{\varepsilon})  & = & \frac{\kappa^{3}}{2},\\[0.4pc]
R_{1}(\mu_{\varepsilon},\nu_{\varepsilon},\mu_{\varepsilon},\nu_{\varepsilon}) & =  & R_{3}(X,\mu_{\varepsilon},X,\mu_{\varepsilon}) & =  &\frac{\kappa(2-3\kappa)}{4},\\[0.4pc]
R_{1}(\mu_{\varepsilon},\mu^{j,a}_{\varepsilon/2},\mu_{\varepsilon},\mu^{j,a}_{\varepsilon/2})  & =  & R_{1}(\mu_{\varepsilon},\nu^{j,a}_{\varepsilon/2},\mu_{\varepsilon},\nu^{j,a}_{\varepsilon/2}) &  = &\frac{\kappa}{16},\\[0.4pc]
 R_{3}(X,\mu^{j,a}_{\varepsilon/2},X,\mu^{j,a}_{\varepsilon/2}) & =  &R_{3}(X,\nu^{j,a}_{\varepsilon/2},X,\nu^{j,a}_{\varepsilon/2})  & = &\frac{\kappa}{16},
\end{array}
$$
for all $j = 1,\dots,n-1,$ $a = 0,1.$ Hence, ${\rm Ric}_{1}(\mu_{\varepsilon},\mu_{\varepsilon}) = {\rm Ric}_{3}(X,X) = n-\kappa.$ On the other hand, because $(\xi_{1}^{\kappa},{\bf g}_{1}^{\kappa})$ and $(\xi^{\kappa}_{3},{\bf g}^{\kappa}_{3})$ are Sasakian structures, the condition to be ${\bf g}_{1}^{\kappa}$ or ${\bf g}_{3}^{\kappa}$ Einstein implies that the Einstein constant $\lambda$ must be equals to $2(2n-1)$ and so ${\rm Ric}_{1}(\mu_{\varepsilon}, \mu_{\varepsilon}) = {\rm Ric}_{3}(X,X)= (2n-1)\kappa.$ Hence, $\kappa = \frac{1}{2}.$ We get the same result for the families of metrics ${\bf g}^{\kappa}_{2}$ because, according with Theorem \ref{maingeneral}, the Sasakian structures $(\xi^{\kappa}_{1},{\bf g}^{\kappa}_{1})$ and $(\xi^{\kappa}_{2},{\bf g}^{\kappa}_{2})$ are isomorphic for every $\kappa.$
\end{proof}

\section{Invariant Einstein metrics on $T_{r}{\mathbb S}^{n}$ and $T_{r}{\mathbb R}{\mathbf P}^{n}$ }

\setcounter{equation}{0}

Consider the unit sphere ${\mathbb S}^{n}$ of ${\mathbb R}^{n+1}$ as the quotient $G/K = {\rm SO}(n+1)/{\rm SO}(n)$ and the real projective space ${\mathbb R}{\bf P}^{n}$ expressed as $G/K = {\rm SO}(n+1)/{\rm S}({\rm O}(1)\times {\rm O}(n)).$ According with \cite[Proposition 4.2]{JC}, $T_{r}G/K$ is the quotient manifold $G/H,$ where $H = {\rm SO}(n-1)$ for ${\mathbb S}^{n}$ and $H = {\rm S}({\rm O}(1)\times{\rm O}(n-1))$ for ${\mathbb R}{\bf P}^{n}.$

The set of matrices $\{B^{0}_{jk},\,1\leq j<k\leq n+1\}$ is an orthonormal basis of the Lie algebra ${\mathfrak s}{\mathfrak o}(n+1)$ of $G = SO(n+1),$ with respect to the invariant trace form $\langle A,B\rangle = -\frac{1}{2}{\rm trace}\, AB,$ for all $A,B\in {\mathfrak s}{\mathfrak o}(n+1).$ Then the Lie algebra ${\mathfrak k}={\mathfrak s}{\mathfrak o}(n)$ of $K,$ where $K = {\rm SO}(n)$ or $K = {\rm S}({\rm O}(1)\times {\rm O}(n)),$ and its orthogonal complement ${\mathfrak m}$ in ${\mathfrak s}{\mathfrak o}(n+1)$ are given by
\[
{\mathfrak k} = {\mathbb R}\{B^{0}_{jk}\}_{2\leq j<k\leq n+1},\quad {\mathfrak m}= {\mathbb R}\{B^{0}_{1k}\}_{2\leq k\leq n+1}.
\]
 Fixed as Cartan subspace to ${\mathfrak a} = {\mathbb R}X,$ where $X= B^{0}_{12},$ the centralizer ${\mathfrak h}$ of ${\mathfrak a}$ in ${\mathfrak k}$ is, using (\ref{bracABC}), trivial for $n = 2$ and, for $n\geq 3,$ it is given by 
\begin{equation}\label{hh}
{\mathfrak h} = {\mathbb R}\{B^{0}_{jk}\}_{3\leq j<k\leq n+1}.
\end{equation}
Since ${\rm ad}^{2}_{X}B^{0}_{1k} = -B^{0}_{1k},$ for $k = 3,\dots ,n+1,$ it follows that ${\mathfrak m}_{\varepsilon} ={\mathbb R}\{B^{0}_{1k}\}_{k = 3,\dots, n+1}$ and, because ${\rm ad}_{X}B^{0}_{1k} = -B^{0}_{2k},$ we put
 \begin{equation}\label{xizeta}
 \mu^{j}_{\varepsilon} = B^{0}_{1\,2+j},\quad \nu^{j}_{\varepsilon} = B^{0}_{2\,2+j},\quad j = 1,\dots, n-1.
 \end{equation}
 Then $\{\mu^{j}_{\varepsilon}\}$ and $\{\nu^{j}_{\varepsilon}\}$ are orthonormal basis of ${\mathfrak m}_{\varepsilon}$ and ${\mathfrak k}_{\varepsilon},$ respectively, satisfying (\ref{eq.ms4.3}). From (\ref{bracABC}), we have
 \begin{equation}\label{brackS}
 [\mu^{j}_{\varepsilon},\mu^{k}_{\varepsilon}] = [\nu^{j}_{\varepsilon},\nu^{k}_{\varepsilon}] = -B^{0}_{2+j\,2+k},\quad [\mu^{j}_{\varepsilon},\nu^{k}_{\varepsilon}] = -\delta_{jk}X.
 \end{equation}
 Hence the bilinear function ${\mathfrak U}$ for the Euclidean space $(\overline{\mathfrak m},{\bf g}_{o_{H}}),$ where $\overline{\mathfrak m} = \overline{\mathfrak n} = {\mathfrak a}\oplus{\mathfrak m}_{\varepsilon}\oplus{\mathfrak k}_{\varepsilon}$ and ${\bf g}_{o_{H}} = {\bf g}_{o_{H}}^{a_{0},a_{\varepsilon},b_{\varepsilon}}= a_{0}\langle\cdot,\cdot\rangle_{\mathfrak a} + a_{\varepsilon}\langle\cdot,\cdot\rangle_{{\mathfrak m}_{\varepsilon}} + b_{\varepsilon}\langle\cdot,\cdot\rangle_{{\mathfrak k}_{\varepsilon}},$ $a_{0},a_{\varepsilon}, b_{\varepsilon}\in {\mathbb R}^{+},$ is determined by
\begin{equation}\label{USR}
{\mathfrak U}(X,\mu^{j}_{\varepsilon}) = \frac{a_{0}-a_{\varepsilon}}{2b_{\varepsilon}}\nu^{j}_{\varepsilon},\quad {\mathfrak U}(X,\nu^{j}_{\varepsilon}) = \frac{b_{\varepsilon}-a_{0}}{2a_{\varepsilon}}\mu^{j}_{\varepsilon},\quad {\mathfrak U}(\mu^{j}_{\varepsilon},\nu^{k}_{\varepsilon}) = \frac{a_{\varepsilon}-b_{\varepsilon}}{2a_{0}}\delta_{jk}X,
\end{equation}
for all $j,k = 1,\dots, n-1,$ the rest of components being zero. Then, using (\ref{bracABC}) again, the function $\alpha\colon \overline{\mathfrak m}\times\overline{\mathfrak m}\to \overline{\mathfrak m}$ that determines the Levi-Civita connection $\nabla$ of $(G/H,{\bf g}),$ is given by 
$$
\begin{array}{lclclcl}
\alpha(X,\mu^{j}_{\varepsilon}) & = & \frac{(a_{0}-a_{\varepsilon}-b_{\varepsilon})}{2b_{\varepsilon}}\nu^{j}_{\varepsilon},& & \alpha(\mu^{j}_{\varepsilon},X) & = & \frac{(a_{0}-a_{\varepsilon}+b_{\varepsilon})}{2b_{\varepsilon}}\nu^{j}_{\varepsilon},\\[0.4pc]
\alpha(X,\nu^{j}_{\varepsilon}) &  = &  \frac{(a_{\varepsilon}+b_{\varepsilon}- a_{0})}{2a_{\varepsilon}}\mu^{j}_{\varepsilon}, & &
\alpha(\nu^{j}_{\varepsilon},X) & = & \frac{(b_{\varepsilon}- a_{\varepsilon} - a_{0})}{2a_{\varepsilon}}\mu^{j}_{\varepsilon},\\[0.4pc]
\alpha(\mu^{j}_{\varepsilon},\nu^{k}_{\varepsilon})&  = & \frac{(a_{\varepsilon}-b_{\varepsilon}-a_{0})}{2a_{0}}\delta_{jk}X, & & \alpha(\nu^{k}_{\varepsilon},\mu^{j}_{\varepsilon})&  = & \frac{(a_{\varepsilon}-b_{\varepsilon}+a_{0})}{2a_{0}}\delta_{jk}X,
\end{array}
$$
the rest of components being zero. Hence, applying (\ref{eq.ms4.3}), (\ref{bracABC}), (\ref{xizeta}) and (\ref{brackS}) in (\ref{R}), we have the following.
\begin{proposition}\label{pR} The Riemannian curvature tensor $R$ of $(G/H,{\bf g})$ at $o_{H}$ is given by
\begin{equation}\label{curvatures1}
\begin{array}{lcl}
R(X,\mu^{j}_{\varepsilon},X,\mu^{j}_{\varepsilon}) &  = &  \frac{1}{4b_{\varepsilon}}(2b_{\varepsilon}(a_{0} + a_{\varepsilon}-b_{\varepsilon}) + (a_{0}-a_{\varepsilon})^{2}-b^{2}_{\varepsilon});\\[0.4pc]

R(X,\nu^{j}_{\varepsilon},X,\nu^{j}_{\varepsilon}) &  =  & \frac{1}{4a_{\varepsilon}}(2a_{\varepsilon}(a_{0}-a_{\varepsilon} + b_{\varepsilon}) + (a_{0}-b_{\varepsilon})^{2} - a^{2}_{\varepsilon});\\[0.4pc]

R(\mu^{j}_{\varepsilon},\mu^{k}_{\varepsilon},\mu^{j}_{\varepsilon},\mu^{k}_{\varepsilon}) &  = &  a_{\varepsilon},\quad R(\nu^{j}_{\varepsilon},\nu^{k}_{\varepsilon},\nu^{j}_{\varepsilon},\nu^{k}_{\varepsilon}) = b_{\varepsilon},\;k\neq j;\\[0.4pc]

R(\mu^{j}_{\varepsilon},\mu^{k}_{\varepsilon},\nu^{j}_{\varepsilon},\nu^{k}_{\varepsilon}) &  = &  \frac{1}{4a_{0}}(4a_{0}b_{\varepsilon} - (a_{0} - a_{\varepsilon} + b_{\varepsilon})^{2}),\;k\neq j;\\[0.4pc]

R(\mu^{j}_{\varepsilon},\nu^{j}_{\varepsilon},\mu^{k}_{\varepsilon},\nu^{l}_{\varepsilon}) &  = &  -\frac{1}{4a_{0}}(2a_{0}(a_{0}-a_{\varepsilon}-b_{\varepsilon})\delta_{kl} + (a_{0}^{2}-(a_{\varepsilon}-b_{\varepsilon})^{2})\delta_{jk}\delta_{jl});\\[0.4pc]

R(\mu^{j}_{\varepsilon},\nu^{k}_{\varepsilon},\mu^{k}_{\varepsilon},\nu^{j}_{\varepsilon}) &  =&  -\frac{1}{4a_{0}}(a_{0}^{2}-(a_{\varepsilon}-b_{\varepsilon})^{2}),\; k\neq j,
\end{array}
\end{equation}
 the rest of its components being zero.
\end{proposition}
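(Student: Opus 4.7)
The plan is to apply the reductive curvature formula (\ref{R}) at the origin $o_H$, substituting directly the bilinear function $\alpha$ computed explicitly just above the statement, and then reading off each inner product against the orthogonal basis $\{X,\mu^j_\varepsilon,\nu^j_\varepsilon\}$ of $(\overline{\mathfrak m},{\bf g}_{o_H})$.

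Before computing I would assemble all relevant bracket data. From (\ref{eq.ms4.3}) and (\ref{brackS}) the only nonzero brackets among these basis vectors are $[X,\mu^j_\varepsilon]=-\nu^j_\varepsilon$ and $[X,\nu^j_\varepsilon]=\mu^j_\varepsilon$ (both lying in $\overline{\mathfrak m}$, with zero $\mathfrak h$-component), $[\mu^j_\varepsilon,\nu^k_\varepsilon]=-\delta_{jk}X$, and, for $j\neq k$, the purely $\mathfrak h$-valued brackets $[\mu^j_\varepsilon,\mu^k_\varepsilon]=[\nu^j_\varepsilon,\nu^k_\varepsilon]=-B^{0}_{j+2\,k+2}\in\mathfrak h$ by (\ref{hh}); everything else vanishes. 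Consequently the first summand $[[\mu_1,\mu_2]_{\mathfrak h},\mu_3]$ in (\ref{R}) contributes only when $\{\mu_1,\mu_2\}$ is a pair of distinct $\mu^j_\varepsilon,\mu^k_\varepsilon$ or a pair of distinct $\nu^j_\varepsilon,\nu^k_\varepsilon$, and in that case ${\rm ad}_{B^{0}_{j+2\,k+2}}$ acts on $\overline{\mathfrak m}$ via (\ref{bracABC}): it kills $\mathfrak a$ and acts as a planar rotation on the relevant indices inside $\mathfrak m_\varepsilon$ and $\mathfrak k_\varepsilon$, sending $\mu^j_\varepsilon\mapsto-\mu^k_\varepsilon$, $\mu^k_\varepsilon\mapsto\mu^j_\varepsilon$, and similarly on the $\nu$'s.

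With these data and the explicit $\alpha$ in hand, each listed component becomes a direct computation. For $R(X,\mu^j_\varepsilon,X,\mu^j_\varepsilon)$ the $\mathfrak h$-term and $\alpha(X,X)$ vanish, leaving $\alpha(-\nu^j_\varepsilon,X)-\alpha(X,\alpha(\mu^j_\varepsilon,X))$; after pairing with $\mu^j_\varepsilon$ and using the algebraic identity $(a_0-a_\varepsilon+b_\varepsilon)(a_\varepsilon+b_\varepsilon-a_0)=b_\varepsilon^{2}-(a_0-a_\varepsilon)^{2}$ one obtains the stated expression, and the $\nu$-case is symmetric. For $j\neq k$ the components $R(\mu^j_\varepsilon,\mu^k_\varepsilon,\mu^j_\varepsilon,\mu^k_\varepsilon)$ and $R(\nu^j_\varepsilon,\nu^k_\varepsilon,\nu^j_\varepsilon,\nu^k_\varepsilon)$ come only from the $\mathfrak h$-term (the quadratic $\alpha$-summands drop out because $\alpha(\mu^j_\varepsilon,\mu^k_\varepsilon)=\alpha(\nu^j_\varepsilon,\nu^k_\varepsilon)=0$), yielding $a_\varepsilon$ and $b_\varepsilon$ directly. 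The remaining mixed components combine one $\mathfrak h$-contribution with the nested $\alpha$-terms; one must track which of the indices $j,k,l$ coincide, but each case reduces to a one-line polynomial identity in $a_0,a_\varepsilon,b_\varepsilon$. Finally, that the unlisted components vanish follows from the ${\rm Ad}(H)$-invariance of $R$ together with the orthogonal ${\rm Ad}(H)$-invariant splitting $\overline{\mathfrak m}=\mathfrak a\oplus\mathfrak m_\varepsilon\oplus\mathfrak k_\varepsilon$: any component mixing these summands in a way incompatible with $H={\rm SO}(n-1)$-equivariance is forced to be zero.

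The only real obstacle is bookkeeping — the sign conventions in the nested $\alpha$-terms and the Kronecker-delta combinations in the mixed cases — but no new idea is needed beyond the displayed formulas (\ref{R}), (\ref{eq.ms4.3}), (\ref{bracABC}), (\ref{brackS}), and the explicit $\alpha$ immediately preceding the proposition.
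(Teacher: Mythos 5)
Your proposal is correct and follows essentially the same route as the paper, which likewise obtains Proposition \ref{pR} by substituting the explicitly computed $\alpha$ and the bracket relations (\ref{eq.ms4.3}), (\ref{bracABC}), (\ref{brackS}) into the reductive curvature formula (\ref{R}); your sample verifications (e.g.\ the identity $(a_0-a_\varepsilon+b_\varepsilon)(a_\varepsilon+b_\varepsilon-a_0)=b_\varepsilon^2-(a_0-a_\varepsilon)^2$ producing the first entry, and the pure $\mathfrak h$-term giving $a_\varepsilon$ and $b_\varepsilon$) check out. The only caveat is that your closing appeal to ${\rm Ad}(H)$-equivariance to kill the unlisted components should not be leaned on for small $n$ (e.g.\ ${\rm SO}(2)$ or ${\rm SO}(3)$ admit extra invariants), but since the direct computation you already set up establishes those vanishings anyway, this is cosmetic rather than a gap.
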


\begin{corollary}\label{c1} The $\xi^{S}$-sectional curvature on $(T_{r}G/K= G/H,{\bf g})$ is a {\rm (}positive{\rm )} constant $c$ if and only if either $a_{\varepsilon} = b_{\varepsilon},$ then $c = \frac{a_{0}}{4a_{\varepsilon}^{2}},$ or $a_{0} = a_{\varepsilon} + b_{\varepsilon},$ then $c = \frac{1}{a_{\varepsilon} + b_{\varepsilon}}.$
\end{corollary}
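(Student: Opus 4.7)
\medskip
\noindent\textbf{Proof plan.} The strategy is to use the $G$-invariance of ${\bf g}$ and $\xi^{S}$ to reduce the problem to the single point $o_{H}$, express the numerator and denominator of the sectional curvature as diagonal quadratic forms in the parameters describing the complementary direction, and then reduce the constancy condition to a single algebraic identity that factors cleanly.

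Since $G$ acts transitively by isometries preserving $\xi^{S}$, the $\xi^{S}$-sectional curvature is constant on $T_{r}G/K$ if and only if it is constant at $o_{H}$ as the plane varies among those containing $\xi^{S}_{o_{H}}=X$. Such a plane is spanned by $X$ and a nonzero vector $Y=\sum_{j}(\alpha_{j}\mu^{j}_{\varepsilon}+\beta_{j}\nu^{j}_{\varepsilon})\in X^{\perp}={\mathfrak m}_{\varepsilon}\oplus{\mathfrak k}_{\varepsilon}$. From the definition of ${\bf g}$ one reads off ${\bf g}(X,X)=a_{0}$, ${\bf g}(X,Y)=0$, and ${\bf g}(Y,Y)=a_{\varepsilon}\sum\alpha_{j}^{2}+b_{\varepsilon}\sum\beta_{j}^{2}$. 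The key point is that in Proposition \ref{pR} the only curvatures with two entries equal to $X$ listed in (\ref{curvatures1}) are the diagonal ones $R_{\mu}:=R(X,\mu^{j}_{\varepsilon},X,\mu^{j}_{\varepsilon})$ and $R_{\nu}:=R(X,\nu^{j}_{\varepsilon},X,\nu^{j}_{\varepsilon})$; the off-diagonal entries $R(X,\mu^{j}_{\varepsilon},X,\mu^{k}_{\varepsilon})$, $R(X,\nu^{j}_{\varepsilon},X,\nu^{k}_{\varepsilon})$ with $j\neq k$ and the mixed $R(X,\mu^{j}_{\varepsilon},X,\nu^{k}_{\varepsilon})$ fall under the clause ``the rest of its components being zero''. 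Multilinearity then gives $R(X,Y,X,Y)=R_{\mu}\sum\alpha_{j}^{2}+R_{\nu}\sum\beta_{j}^{2}$.

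Consequently $K(X,Y)=R(X,Y,X,Y)/\bigl(a_{0}\,{\bf g}(Y,Y)\bigr)$ is a ratio of two positive-definite diagonal quadratic forms in the independent variables $(\alpha_{j},\beta_{j})$, and such a ratio is independent of the variables iff the two forms are proportional, i.e., $R_{\mu}/a_{\varepsilon}=R_{\nu}/b_{\varepsilon}$. The main task is then to expand $b_{\varepsilon}R_{\mu}-a_{\varepsilon}R_{\nu}$ using the explicit formulas in (\ref{curvatures1}); I expect the result to simplify (up to a positive constant) to $-(a_{\varepsilon}-b_{\varepsilon})(a_{0}-a_{\varepsilon}-b_{\varepsilon})$, yielding the two alternatives $a_{\varepsilon}=b_{\varepsilon}$ or $a_{0}=a_{\varepsilon}+b_{\varepsilon}$. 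Plugging each back into $R_{\mu}$ and dividing by $a_{0}a_{\varepsilon}$ delivers $c=a_{0}/(4a_{\varepsilon}^{2})$ in the first case (using $R_{\mu}=a_{0}^{2}/(4a_{\varepsilon})$) and $c=1/(a_{\varepsilon}+b_{\varepsilon})$ in the second (using $R_{\mu}=a_{\varepsilon}$); positivity of $c$ is automatic from $a_{0},a_{\varepsilon},b_{\varepsilon}>0$. The one genuine obstacle is the algebraic simplification of $b_{\varepsilon}R_{\mu}-a_{\varepsilon}R_{\nu}$, where the cross terms from $(a_{0}-a_{\varepsilon})^{2}$ and $(a_{0}-b_{\varepsilon})^{2}$ must be tracked with care; no further geometric input is needed once Proposition \ref{pR} is in hand.
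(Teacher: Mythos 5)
Your proposal is correct and follows essentially the same route as the paper: both arguments reduce by homogeneity to the origin, use the diagonality of $R(X,\cdot,X,\cdot)$ on the basis $\{\mu^{j}_{\varepsilon},\nu^{j}_{\varepsilon}\}$ to expand $R(X,Y,X,Y)$ multilinearly for a general $Y\perp X$, and reduce constancy to the single condition $b_{\varepsilon}R(X,\mu^{j}_{\varepsilon},X,\mu^{j}_{\varepsilon})=a_{\varepsilon}R(X,\nu^{j}_{\varepsilon},X,\nu^{j}_{\varepsilon})$, which indeed factors as $(a_{\varepsilon}-b_{\varepsilon})(a_{\varepsilon}+b_{\varepsilon}-a_{0})=0$ and yields the stated values of $c$. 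The only cosmetic difference is that the paper first derives the two alternatives from the equality $K(X,\mu^{j}_{\varepsilon})=K(X,\nu^{j}_{\varepsilon})$ and then verifies sufficiency for arbitrary planes, whereas you phrase the whole equivalence at once as proportionality of two diagonal quadratic forms.
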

\begin{proof} From (\ref{curvatures1}) it follows that the $\xi^{S}$-sectional curvatures $K(X,\mu^{j}_{\varepsilon})$ and  $K(X,\nu_{\varepsilon}^{j})$ at the origin $o_{H}$ coincide, for some $j = 1,\dots, n-1,$ if and only if $a_{\varepsilon} = b_{\varepsilon}$ or $a_{0} = a_{\varepsilon} + b_{\varepsilon}.$ Put $c = K(X,\mu^{j}_{\varepsilon}) = K(X,\nu^{j}_{\varepsilon}).$ Then $c = \frac{a_{0}}{4a_{\varepsilon}^{2}}$ if $a_{\varepsilon} = b_{\varepsilon}$ and $c =  \frac{1}{a_{\varepsilon} + b_{\varepsilon}}$ if $a_{0} = a_{\varepsilon} + b_{\varepsilon}.$ 

Next, let $u$ be any vector in $\overline{\mathfrak m}$ orthogonal to $X.$ Then, $u = \sum_{j=1}^{n-1}(u_{j}\mu^{j}_{\varepsilon} + \overline{u}_{j}\nu^{j}_{\varepsilon}),$ for some $u_{j},\overline{u}_{j}\in {\mathbb R}.$ From Proposition \ref{pR}, we get
$$
\begin{array}{lcl}
R(X,u,X,u) & = & \sum_{j=1}^{n-1}((u_{j})^{2}R(X,\mu^{j}_{\varepsilon},X,\mu^{j}_{\varepsilon}) +(\overline{u}_{j})^{2}R(X,\nu^{j}_{\varepsilon},X,\nu^{j}_{\varepsilon}))\\[0.4pc]
& =  & c\,a_{0}(a_{\varepsilon}\sum_{j=1}^{n-1}(u_{j})^{2} + b_{\varepsilon}\sum_{j=1}^{n-1}(\overline{u}_{j})^{2}) = c\,a_{0}{\bf g}(u,u).
\end{array}
$$
Hence, $K(X,u) = c.$ Because ${\bf g}$ is a homogeneous Riemannian metric, this implies that the $\xi^{S}$-sectional curvature on $T_{r}G/K$ is constant equals to $c.$
\end{proof}

Next, let $Q$ be the Ricci operator of $(G/H,{\bf g}),$ defined by ${\rm Ric}(\cdot,\cdot) = g(Q\cdot,\cdot).$ Then one gets that the vectors $\{X,\mu^{j}_{\varepsilon},\nu^{j}_{\varepsilon};\;j = 1,\dots,n-1\}$ form an orthogonal basis of eigenvectors of $Q$ at $o_{H}.$  Moreover, we have
\begin{lemma}\ The subspaces ${\mathfrak a},$ ${\mathfrak m}_{\varepsilon}$ and ${\mathfrak k}_{\varepsilon}$ of $\overline{\mathfrak m}$ are eigenspaces of the Ricci operator $Q$ at $o_{H}$ and the corresponding eigenvalues $\rho_{0},$ $\rho_{\varepsilon}$ and $\varrho_{\varepsilon}$ are given by 
\begin{equation}\label{eigenvalues}
\begin{array}{lcl}
\rho_{0} & = & \frac{n-1}{2a_{0}a_{\varepsilon}b_{\varepsilon}}(a_{0}^{2}-(a_{\varepsilon}-b_{\varepsilon})^{2}),\\[0.4pc]
\rho_{\varepsilon} & = & \frac{1}{2a_{0}a_{\varepsilon} b_{\varepsilon}}(2(n-1)a_{0}b_{\varepsilon} + a^{2}_{\varepsilon}-b_{\varepsilon}^{2}-a_{0}^{2}),\\[0.4pc]

\varrho_{\varepsilon} & = & \frac{1}{2a_{0}a_{\varepsilon}b_{\varepsilon}}(2(n-1)a_{0}a_{\varepsilon} - a^{2}_{\varepsilon} + b_{\varepsilon}^{2}-a_{0}^{2}).
\end{array}
\end{equation}

\end{lemma}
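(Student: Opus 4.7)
The plan is to obtain the Ricci tensor ${\rm Ric}$ by tracing the curvature $R$ of Proposition \ref{pR} over the ${\bf g}_{o_H}$-orthogonal basis $\mathcal{B}=\{X,\mu^j_\varepsilon,\nu^j_\varepsilon\}_{j=1,\dots,n-1}$ of $\overline{\mathfrak m}$, via
\[
{\rm Ric}(u,v)=\sum_{e\in\mathcal{B}}\frac{1}{{\bf g}(e,e)}R(e,u,e,v).
\]
Once ${\rm Ric}$ is shown to be diagonal in $\mathcal{B}$, the stated eigenvalues of $Q$ follow from ${\rm Ric}(u,u)={\bf g}(Qu,u)=\lambda_u\,{\bf g}(u,u)$, giving $a_0\rho_0$, $a_\varepsilon\rho_\varepsilon$ and $b_\varepsilon\varrho_\varepsilon$ respectively.

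First I would check that every ``off-diagonal'' Ricci component vanishes: the entries ${\rm Ric}(X,\mu^j_\varepsilon)$, ${\rm Ric}(X,\nu^j_\varepsilon)$, ${\rm Ric}(\mu^j_\varepsilon,\nu^k_\varepsilon)$, as well as ${\rm Ric}(\mu^j_\varepsilon,\mu^k_\varepsilon)$ and ${\rm Ric}(\nu^j_\varepsilon,\nu^k_\varepsilon)$ for $j\neq k$. Each non-zero curvature component listed in Proposition \ref{pR} has a very specific index pattern; a direct inspection shows that no summand $R(e,u,e,v)$ in the trace above matches any listed pattern (or any pair/slot-permutation thereof), so each such sum collapses to zero. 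In particular, invoking the ``rest of its components being zero'' clause, one has $R(\mu^j_\varepsilon,\nu^l_\varepsilon,\mu^j_\varepsilon,\nu^l_\varepsilon)=0$ for $l\neq j$, since this pattern is not an algebraic-symmetry image of either $R(\mu^j,\nu^j,\mu^k,\nu^l)$ or $R(\mu^j,\nu^k,\mu^k,\nu^j)$. This already proves that ${\mathfrak a}$, ${\mathfrak m}_\varepsilon$ and ${\mathfrak k}_\varepsilon$ are eigenspaces of $Q$.

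To read off the eigenvalues I would compute the three diagonal entries. For ${\rm Ric}(X,X)$ only the summands $\frac{1}{a_\varepsilon}R(\mu^j_\varepsilon,X,\mu^j_\varepsilon,X)$ and $\frac{1}{b_\varepsilon}R(\nu^j_\varepsilon,X,\nu^j_\varepsilon,X)$ survive; expansion of the two brackets given in Proposition \ref{pR} collapses, after cancellation, to $\frac{n-1}{2a_\varepsilon b_\varepsilon}(a_0^2-(a_\varepsilon-b_\varepsilon)^2)=a_0\rho_0$. For ${\rm Ric}(\mu^j_\varepsilon,\mu^j_\varepsilon)$ the contributing terms are the $(X,\mu^j_\varepsilon)$-entry, the $(n-2)$ sectional-type terms $R(\mu^l_\varepsilon,\mu^j_\varepsilon,\mu^l_\varepsilon,\mu^j_\varepsilon)=a_\varepsilon$ with $l\neq j$, and the single term $R(\nu^j_\varepsilon,\mu^j_\varepsilon,\nu^j_\varepsilon,\mu^j_\varepsilon)$ extracted from the $R(\mu^j,\nu^j,\mu^k,\nu^l)$ formula with $k=l=j$; an analogous pattern holds for ${\rm Ric}(\nu^j_\varepsilon,\nu^j_\varepsilon)$. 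Routine algebraic simplification then produces the expressions $a_\varepsilon\rho_\varepsilon$ and $b_\varepsilon\varrho_\varepsilon$ in the form (\ref{eigenvalues}).

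The main obstacle is the bookkeeping in the first step: one must verify, pattern by pattern, that no ``hidden'' non-zero curvature component (under the algebraic symmetries of $R$) contributes to an off-diagonal Ricci entry, in particular convincing oneself that configurations like $R(\mu^j_\varepsilon,\nu^l_\varepsilon,\mu^j_\varepsilon,\nu^l_\varepsilon)$ with $l\neq j$ are genuinely zero rather than equivalent, via those symmetries, to one of the listed components. Once that check is carried out, the remaining work reduces to polynomial simplification of the three diagonal sums.
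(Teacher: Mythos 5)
Your proposal is correct and follows exactly the route the paper intends: the lemma is stated without an explicit proof precisely because it is the direct trace of the curvature components in Proposition \ref{pR} over the ${\bf g}$-orthogonal basis $\{X,\mu^{j}_{\varepsilon},\nu^{j}_{\varepsilon}\}$, and your diagonal computations (including the check that $R(\mu^{j}_{\varepsilon},\nu^{l}_{\varepsilon},\mu^{j}_{\varepsilon},\nu^{l}_{\varepsilon})=0$ for $l\neq j$, which one can also confirm directly from $[\mu^{j}_{\varepsilon},\nu^{l}_{\varepsilon}]=0$ and the vanishing of the relevant $\alpha$-terms) reproduce the stated eigenvalues.
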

\begin{remark}{\rm From this lemma, the scalar curvature $s$ of $(G/H,{\bf g})$ is given by}
\[
s = \frac{n-1}{2a_{0}a_{\varepsilon}b_{\varepsilon}}(2(n-1)a_{0}(a_{\varepsilon}+ b_{\varepsilon}) - (a_{\varepsilon} - b_{\varepsilon})^{2} - a_{0}^{2}).
\]
\end{remark}

Now we are in a position to determine all the invariant metrics that are Einstein on $T_{r}{\mathbb S}^{n}$ and $T_{r}{\mathbb R}{\bf P}^{n}.$

\begin{theorem}\label{teinstein1} There exists, except for a homothetic coefficient $\alpha>0,$ a unique $G$-invariant Einstein Riemannian metric ${\bf g}^{\alpha}$ on $T_{r}(G/K),$ for $G/K = {\mathbb S}^{n}$ or ${\mathbb R}{\bf P}^{n}$ $(n\geq 2),$ and it is determined  by
\begin{equation}\label{einstein}
{\bf g}^{\alpha}_{o_{H}} = \alpha\Big(\langle\cdot,\cdot\rangle_{\mathfrak a} +  \frac{n}{2(n-1)}\langle\cdot,\cdot\rangle_{{\mathfrak m}_{\varepsilon}\oplus{\mathfrak k}_{\varepsilon}}\Big).
\end{equation}
\end{theorem}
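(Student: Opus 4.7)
The plan is to exploit the explicit Ricci eigenvalues already computed in the lemma preceding this theorem. Because $\mathfrak{m}_{\varepsilon/2} = \mathfrak{k}_{\varepsilon/2} = 0$ when $G/K \in \{\mathbb{S}^n, \mathbb{R}\mathbf{P}^n\}$, every $G$-invariant Riemannian metric on $T_r(G/K) = G/H$ is of the form $\mathbf{g}^{a_0,a_\varepsilon,b_\varepsilon}$ from (\ref{Bb}), and its Ricci operator splits $\overline{\mathfrak{m}} = \mathfrak{a} \oplus \mathfrak{m}_\varepsilon \oplus \mathfrak{k}_\varepsilon$ into three mutually orthogonal eigenspaces with eigenvalues $\rho_0, \rho_\varepsilon, \varrho_\varepsilon$ given by (\ref{eigenvalues}). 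Hence the Einstein condition is the purely algebraic system $\rho_0 = \rho_\varepsilon = \varrho_\varepsilon$ in the three positive unknowns $a_0, a_\varepsilon, b_\varepsilon$.

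First I would factor the difference
$$\rho_\varepsilon - \varrho_\varepsilon \;=\; \frac{(b_\varepsilon - a_\varepsilon)\bigl((n-1)a_0 - (a_\varepsilon + b_\varepsilon)\bigr)}{a_0\, a_\varepsilon\, b_\varepsilon},$$
which splits the discussion into Case (i) $a_\varepsilon = b_\varepsilon$, and Case (ii) $(n-1)a_0 = a_\varepsilon + b_\varepsilon$ with $a_\varepsilon \neq b_\varepsilon$. In Case (i) the remaining equation $\rho_0 = \rho_\varepsilon$ collapses to $n\,a_0 = 2(n-1)\,a_\varepsilon$, forcing $a_\varepsilon = b_\varepsilon = \frac{n}{2(n-1)}\,a_0$; setting $\alpha := a_0$ yields exactly the metric (\ref{einstein}).

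In Case (ii) I would substitute $a_\varepsilon + b_\varepsilon = (n-1)a_0$ into the expression for $\rho_\varepsilon$ in (\ref{eigenvalues}), obtaining after simplification $\rho_\varepsilon = \frac{n(n-2)\,a_0}{2\,a_\varepsilon b_\varepsilon}$, so that $\rho_0 = \rho_\varepsilon$ reduces to
$$(n-1)(a_\varepsilon - b_\varepsilon)^2 \;=\; -(n^2 - 3n + 1)\,a_0^2.$$
For $n \geq 3$ the right-hand side is strictly negative, which contradicts positivity of the left. For $n = 2$ it gives $(a_\varepsilon - b_\varepsilon)^2 = a_0^2$, and combined with $a_\varepsilon + b_\varepsilon = a_0$ this forces one of $a_\varepsilon, b_\varepsilon$ to vanish, contradicting the requirement that both be positive. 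Hence Case (ii) is empty, and the only invariant Einstein metrics are the homothetic family $\mathbf{g}^\alpha$ described in the statement. The main obstacle is routine but requires care: ensuring in Case (ii) that the quadratic residual equation admits no admissible solution in every dimension $n \geq 2$, which hinges on the sign of $n^2 - 3n + 1$ (positive for $n \geq 3$) together with the boundary sub-case $n = 2$ where positivity of $a_\varepsilon, b_\varepsilon$ must be invoked separately.
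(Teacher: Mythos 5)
Your proposal is correct and follows essentially the same route as the paper: both reduce the Einstein condition to the algebraic system $\rho_{0}=\rho_{\varepsilon}=\varrho_{\varepsilon}$ from (\ref{eigenvalues}), split into the cases $a_{\varepsilon}=b_{\varepsilon}$ and $(n-1)a_{0}=a_{\varepsilon}+b_{\varepsilon}$, and eliminate the second case by the same positivity argument, your equation $(n-1)(a_{\varepsilon}-b_{\varepsilon})^{2}=-(n^{2}-3n+1)a_{0}^{2}$ being an equivalent rewriting of the paper's displayed identity. No substantive differences.
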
 

\begin{proof} Let ${\bf g} = {\bf g}^{a_{0},a_{\varepsilon},b_{\varepsilon}}$ be any $G$-invariant Riemannian metric on $T_{r}(G/K) = G/H.$ From (\ref{eigenvalues}), it follows that $\rho_{\varepsilon} = \varrho_{\varepsilon}$ if and only if 
\begin{equation}\label{conditions}
a_{\varepsilon} = b_{\varepsilon}\quad \mbox{\rm or}\quad a_{0} = \frac{1}{n-1}(a_{\varepsilon} + b_{\varepsilon}).
\end{equation}
If $a_{\varepsilon} = b_{\varepsilon},$ clearly the equality $\rho_{0} = \rho_{\varepsilon} = \varrho_{\varepsilon}$ holds if and only if $a_{\varepsilon} =\frac{n}{2(n-1)}a_{0}$ and, putting $\alpha= a_{0},$ we have that ${\bf g} = {\bf g}^{\alpha}.$ If $a_{0}= \frac{1}{n-1}(a_{\varepsilon} + b_{\varepsilon}),$ then 
$$
\begin{array}{lcl}
\rho_{0} & = & \frac{1}{2(a_{\varepsilon} + b_{\varepsilon})a_{\varepsilon}b_{\varepsilon}}((a_{\varepsilon} + b_{\varepsilon})^{2} - (n-1)^{2}(a_{\varepsilon}-b_{\varepsilon})^{2}),\\[0.4pc]
\rho_{\varepsilon} & = & \frac{n(n-2)}{2(n-1)a_{\varepsilon}b_{\varepsilon}}(a_{\varepsilon} + b_{\varepsilon}).
\end{array}
$$
Let us see that $\rho_{0}= \rho_{\varepsilon}$ cannot occur in this case. Here,
\[
\Big(\frac{n(n-2)}{n-1} - 1\Big )(a_{\varepsilon} + b_{\varepsilon})^{2} + (n-1)^{2}(a_{\varepsilon}-b_{\varepsilon})^{2} = 0.
\]
Then, for $n = 2$ we get that $a_{\varepsilon}b_{\varepsilon} = 0$ and, for $n>2,$ that $a_{\varepsilon} = b_{\varepsilon} = 0,$ but $a_{\varepsilon}$ and $b_{\varepsilon}$ are positive constants.
\end{proof}

From Corollary \ref{c1}, Theorem \ref{teinstein1} and (\ref{USR}), we have directly the following.
\begin{corollary}The standard vector field $\xi^{S}$ is Killing with constant $\xi^{S}$-sectional curvature equals to $\frac{(n-1)^{2}}{n^{2}\alpha}$ on the tangent sphere bundle $T_{r}(G/K),$ for $G/K = {\mathbb S}^{n}$ or ${\mathbb R}{\bf P}^{n},$ equipped with the $G$-invariant Einstein metric ${\bf g}^{\alpha}.$
\end{corollary}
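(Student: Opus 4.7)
The plan is to verify both assertions by directly invoking the structural data already gathered for $T_{r}(G/K) = G/H$ with $G/K \in \{\mathbb{S}^{n},\mathbb{R}\mathbf{P}^{n}\}$. First I would record that Theorem \ref{teinstein1} identifies ${\bf g}^{\alpha}$ as the $G$-invariant metric ${\bf g}^{a_{0},a_{\varepsilon},b_{\varepsilon}}$ with
\[
a_{0}=\alpha, \qquad a_{\varepsilon}=b_{\varepsilon}=\frac{n}{2(n-1)}\alpha,
\]
which in particular places it inside the first alternative of the hypothesis of Corollary \ref{c1}.

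For the Killing property of $\xi^{S}$, I would apply the criterion (\ref{Killing}) to $\mu = \xi^{S}_{o_{H}} = X \in {\mathfrak a}$. Since $\overline{\mathfrak m}= {\mathfrak a}\oplus {\mathfrak m}_{\varepsilon}\oplus {\mathfrak k}_{\varepsilon}$ is ${\bf g}^{\alpha}$-orthogonal, the condition ${\mathfrak U}(\mu_{1},\mu_{2})\in X^{\perp}$ is equivalent to the vanishing of the ${\mathfrak a}$-component of ${\mathfrak U}$. Inspecting (\ref{USR}), only the family ${\mathfrak U}(\mu_{\varepsilon}^{j},\nu_{\varepsilon}^{k}) = \frac{a_{\varepsilon}-b_{\varepsilon}}{2a_{0}}\delta_{jk}X$ projects onto ${\mathfrak a}$, and this projection vanishes precisely because $a_{\varepsilon}=b_{\varepsilon}$ for ${\bf g}^{\alpha}$; the remaining components ${\mathfrak U}(X,\mu_{\varepsilon}^{j})\in {\mathfrak k}_{\varepsilon}$ and ${\mathfrak U}(X,\nu_{\varepsilon}^{j})\in {\mathfrak m}_{\varepsilon}$ automatically lie in $X^{\perp}$. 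Hence $\xi^{S}$ is Killing.

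For the sectional curvature, Corollary \ref{c1} already asserts that whenever $a_{\varepsilon}=b_{\varepsilon}$ the $\xi^{S}$-sectional curvature is the constant $c=\frac{a_{0}}{4a_{\varepsilon}^{2}}$. Substituting the values dictated by (\ref{einstein}) yields
\[
c=\frac{\alpha}{4\left(\frac{n}{2(n-1)}\alpha\right)^{2}}=\frac{(n-1)^{2}}{n^{2}\alpha},
\]
which is the claimed value. I expect no real obstacle: both statements are immediate consequences of results already established in the paper, and the only point requiring a little care is matching the normalization in (\ref{einstein}) with the parameters $(a_{0},a_{\varepsilon},b_{\varepsilon})$ used in Corollary \ref{c1} and in the formula (\ref{USR}) for the function ${\mathfrak U}$.
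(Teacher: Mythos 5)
Your proof is correct and follows essentially the same route as the paper, which derives this corollary directly from Corollary \ref{c1}, Theorem \ref{teinstein1} and (\ref{USR}) (the Killing criterion (\ref{Killing}) applied to $\mu=X$ reduces to the vanishing of the ${\mathfrak a}$-component of ${\mathfrak U}$, forced by $a_{\varepsilon}=b_{\varepsilon}$). The computation $c=\frac{a_{0}}{4a_{\varepsilon}^{2}}=\frac{(n-1)^{2}}{n^{2}\alpha}$ matches the paper's intended argument exactly.
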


\begin{corollary}\label{c2} There exists an invariant Riemannian metric on $T_{r}{\mathbb S}^{n}$ or $T_{r}{\mathbb R}{\bf P}^{n}$ with cons\-tant sectional curvature if and only if $n =2.$ On $T_{r}{\mathbb S}^{2} \cong SO(3)$ or on $T_{r}{\mathbb R}{\bf P}^{2}\cong SO(3)/\pm I_{2},$ an invariant Riemannian metric ${\bf g}$ has constant sectional curvature if and only if ${\bf g}_{o_{H}} = \alpha\langle\cdot,\cdot\rangle,$ for some $\alpha>0.$ Then, the constant sectional curvature $c$ is $\frac{1}{4\alpha}.$
\end{corollary}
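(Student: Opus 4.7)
The approach is to combine Theorem \ref{teinstein1} (uniqueness of the invariant Einstein metric) with the explicit curvature formulas of Proposition \ref{pR}. Since constant sectional curvature implies Einstein, any invariant metric with constant sectional curvature on $T_r(G/K)$ must coincide, up to a positive homothetic factor, with the metric $\mathbf{g}^\alpha$ of Theorem \ref{teinstein1}, whose coefficients are $a_0=\alpha$ and $a_\varepsilon=b_\varepsilon=\frac{n}{2(n-1)}\alpha$. It therefore suffices to decide, for each $n\geq 2$, whether $\mathbf{g}^\alpha$ has constant sectional curvature.

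For $n\geq 3$, one has $\dim{\mathfrak m}_\varepsilon=n-1\geq 2$, so a plane spanned by $\mu^j_\varepsilon,\mu^k_\varepsilon$ with $j\neq k$ exists, and Proposition \ref{pR} gives $K(\mu^j_\varepsilon,\mu^k_\varepsilon)=a_\varepsilon/a_\varepsilon^2=1/a_\varepsilon$. On the other hand, specialising the formula for $R(\mu^j_\varepsilon,\nu^j_\varepsilon,\mu^k_\varepsilon,\nu^l_\varepsilon)$ to $j=k=l$ and using $a_\varepsilon=b_\varepsilon$ yields
\[
K(\mu^j_\varepsilon,\nu^j_\varepsilon)=\frac{4a_\varepsilon-3a_0}{4a_\varepsilon^2}.
\]
Equating these two curvatures would force $a_0=0$, contradicting $a_0>0$. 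Hence $\mathbf{g}^\alpha$ is not of constant sectional curvature when $n\geq 3$, and the first assertion of the corollary follows.

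For $n=2$ the factor $n/(2(n-1))$ equals $1$, so the Einstein metric reads $\mathbf{g}^\alpha_{o_H}=\alpha\langle\cdot,\cdot\rangle$; moreover $T_r(G/K)$ has dimension $3$, in which Einstein and constant sectional curvature coincide. Therefore $\mathbf{g}^\alpha$ is indeed of constant sectional curvature, and Corollary \ref{c1} (with $a_0=a_\varepsilon=b_\varepsilon=\alpha$) yields $c=\alpha/(4\alpha^2)=1/(4\alpha)$. Combining both cases with the uniqueness step of the first paragraph completes the proof.

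The only potential obstacle is the computation of $K(\mu^j_\varepsilon,\nu^j_\varepsilon)$, which requires the correct specialisation $j=k=l$ of the multi-index expression $R(\mu^j_\varepsilon,\nu^j_\varepsilon,\mu^k_\varepsilon,\nu^l_\varepsilon)$ in Proposition \ref{pR}: the bracket $[\mu^j_\varepsilon,\nu^j_\varepsilon]=-X$ from (\ref{brackS}) contributes the term $a_0^2-(a_\varepsilon-b_\varepsilon)^2$, which, once $a_\varepsilon=b_\varepsilon$ is imposed, is precisely what obstructs constant sectional curvature when $n\geq 3$.
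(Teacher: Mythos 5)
Your proof is correct and uses essentially the same ingredients as the paper: the explicit curvature formulas of Proposition \ref{pR} (comparing $K(\mu^{j}_{\varepsilon},\mu^{k}_{\varepsilon})=1/a_{\varepsilon}$ with $K(\mu^{j}_{\varepsilon},\nu^{j}_{\varepsilon})=(4a_{\varepsilon}-3a_{0})/(4a_{\varepsilon}^{2})$ to exclude $n\geq 3$), Theorem \ref{teinstein1}, and the three-dimensional Einstein argument for $n=2$. The only difference is one of ordering: you invoke ``constant curvature implies Einstein'' at the outset to pin the metric down to ${\bf g}^{\alpha}$ and thereby bypass the two-case analysis of Corollary \ref{c1}, whereas the paper starts from Corollary \ref{c1} and derives $a_{0}=a_{\varepsilon}=b_{\varepsilon}$ before excluding $n\geq 3$; both routes are valid.
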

\begin{proof} Suppose that $(G/H,{\bf g})$ has constant sectional curvature $c.$ From Corollary \ref{c1}, $a_{\varepsilon} = b_{\varepsilon}$ or $a_{0} = a_{\varepsilon} + b_{\varepsilon}.$ If $a_{\varepsilon} = b_{\varepsilon},$ then $c = \frac{a_{0}}{4a_{\varepsilon}^{2}}$ and from (\ref{curvatures1}),
\[
c = K(\mu^{1}_{\varepsilon},\nu^{1}_{\varepsilon}) = \frac{4a_{\varepsilon}-3a_{0}}{4a^{2}_{\varepsilon}}.
\]
Hence, $a_{0} = a_{\varepsilon} = b_{\varepsilon}$ and $c = \frac{1}{4\alpha},$ where $\alpha = a_{0}.$ If $a_{0} = a_{\varepsilon} + b_{\varepsilon},$ then $K(\mu^{1}_{\varepsilon},\nu^{1}_{\varepsilon}) = -\frac{1}{2a_{\varepsilon}}<0$ and this case does not occur. If $n\geq 3,$ we have that $K(\mu^{1}_{\varepsilon},\mu^{2}_{\varepsilon}) = \frac{1}{a_{\varepsilon}} = \frac{1}{\alpha}\neq \frac{1}{4\alpha}.$ 

Then $n = 2$ and ${\bf g}_{o_{H}} = \alpha\langle\cdot,\cdot\rangle,$ for some $\alpha>0.$ Moreover, from Theorem \ref{teinstein1}, ${\bf g}$ is Einstein and hence, ${\bf g}$ has in fact constant sectional curvature.
\end{proof}

\begin{corollary}The induced Sasaki metric $\tilde{g}^{S}$ on $T_{r}(G/K),$ for $G/K = {\mathbb S}^{n}$ or ${\mathbb R}{\bf P}^{n},$ is Einstein if and only if $r =1$ and $n = 2.$ Then $(T_{1}{\mathbb S}^{2},\tilde{g}^{S})$ and $(T_{1}{\mathbb R}{\bf P}^{2},\tilde{g}^{S})$ have constant sectional curvature $c = \frac{1}{4}.$
\end{corollary}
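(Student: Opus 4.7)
The plan is to exploit the explicit parametrization of invariant metrics on $T_r(G/K) = G/H$ established earlier and reduce the Einstein condition for $\widetilde{g}^S$ to a direct comparison of coefficients.

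First, I would read off the parameters $(a_0, a_\varepsilon, b_\varepsilon)$ of $\widetilde{g}^S$ from equation (\ref{gsasaki}): since $G/K = \mathbb{S}^n$ or $\mathbb{R}\mathbf{P}^n$ gives $\widetilde{g}^S_{o_H} = \langle\cdot,\cdot\rangle_{\mathfrak{m}} + r^2\langle\cdot,\cdot\rangle_{{\mathfrak k}_\varepsilon}$, this corresponds to $a_0 = a_\varepsilon = 1$ and $b_\varepsilon = r^2$. Then I would apply Theorem \ref{teinstein1}: every $G$-invariant Einstein metric on $T_r(G/K)$ is homothetic to ${\bf g}^{\alpha}$ with $a_0 = \alpha$ and $a_\varepsilon = b_\varepsilon = \tfrac{n}{2(n-1)}\alpha$. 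Matching these two parametrizations forces $a_\varepsilon = b_\varepsilon$, i.e.\ $r^2 = 1$, together with $\tfrac{n}{2(n-1)} = 1$, i.e.\ $n = 2$. Conversely, for $r = 1$ and $n = 2$ one has $a_0 = a_\varepsilon = b_\varepsilon = 1$, which is exactly ${\bf g}^{\alpha=1}$, hence Einstein.

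For the second assertion, once $r = 1$ and $n = 2$ is established, the metric is $\widetilde{g}^S_{o_H} = \langle\cdot,\cdot\rangle$, i.e.\ it satisfies the hypothesis of Corollary \ref{c2} with $\alpha = 1$. Invoking that corollary directly yields constant sectional curvature $c = \tfrac{1}{4\alpha} = \tfrac{1}{4}$.

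There is essentially no obstacle here: the work has already been done in Theorem \ref{teinstein1} (uniqueness, up to homothety, of the invariant Einstein metric) and in Corollary \ref{c2} (characterization of constant sectional curvature among invariant metrics). The only care needed is bookkeeping to make sure the Sasaki coefficients $(1, 1, r^2)$ are compared against $(\alpha, \tfrac{n}{2(n-1)}\alpha, \tfrac{n}{2(n-1)}\alpha)$ correctly and that the case $a_0 = a_\varepsilon + b_\varepsilon$ from Corollary \ref{c1} (which appeared as an alternative in the Einstein analysis) is automatically ruled out here because $a_0 = 1 \neq 1 + r^2$ for any $r > 0$. Thus the proof reduces to a one-line coefficient match plus a citation.
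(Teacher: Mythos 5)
Your proposal is correct and follows essentially the same route as the paper: compare the Sasaki coefficients $(1,1,r^{2})$ from (\ref{gsasaki}) with the unique (up to homothety) Einstein family ${\bf g}^{\alpha}$ of Theorem \ref{teinstein1}, conclude $\alpha=1$, $n=2$, $r=1$, and then invoke Corollary \ref{c2} for the value $c=\frac{1}{4}$. The extra remark about ruling out $a_{0}=a_{\varepsilon}+b_{\varepsilon}$ is harmless but unnecessary, since Theorem \ref{teinstein1} already gives the full list of invariant Einstein metrics.
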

\begin{proof} From (\ref{gsasaki}), $\tilde{g}^{S} = {\bf g}^{\alpha},$ for some $\alpha>0,$ if and only if $r = 1$ and $n = 2.$ This implies that $\alpha = 1$ and $\tilde{g}^{S}_{o_{H}} = \langle\cdot,\cdot\rangle.$ From Corollary \ref{c2}, $\tilde{g}^{S}$ has constant sectional curvature $c = \frac{1}{4}.$
\end{proof}

 \begin{theorem}\label{teinsteinS1} There exists a unique $G$-invariant contact metric ${\bf g}$ on $T_{r}(G/K),$ for $G/K = {\mathbb S}^{n}$ or ${\mathbb R}{\bf P}^{n},$ $n\geq 2,$ which is Einstein. Such a metric is Sasakian and it is determined by
 \[
 {\bf g}_{o_{H}} = \frac{(n-1)^{2}}{n^{2}}\Big( \langle\cdot,\cdot\rangle_{\mathfrak a} + \frac{n}{2(n-1)}\langle\cdot,\cdot\rangle_{{\mathfrak m}_{\varepsilon}\oplus{\mathfrak k}_{\varepsilon}}\Big).
 \]
 If $n = 2,$ ${\bf g}$ is the $3$-Sasakian metric ${\bf g}_{o_{H}} = \frac{1}{4}\langle\cdot,\cdot\rangle,$ with constant sectional curvature $c = 1.$
\end{theorem}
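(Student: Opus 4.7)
The plan is to identify invariant contact metrics with invariant Einstein metrics by matching the two explicit families already available in the paper. By Theorem \ref{maingeneral1}, every $G$-invariant contact metric on $T_{r}(G/K)$ for $G/K\in\{\mathbb{S}^{n},\mathbb{R}\mathbf{P}^{n}\}$ has the form
\[
{\bf g}_{o_{H}}=\kappa^{2}\langle\cdot,\cdot\rangle_{\mathfrak a}+\frac{\kappa}{2q_{\varepsilon}}\langle\cdot,\cdot\rangle_{{\mathfrak m}_{\varepsilon}}+\frac{\kappa q_{\varepsilon}}{2}\langle\cdot,\cdot\rangle_{{\mathfrak k}_{\varepsilon}},\qquad \kappa,q_{\varepsilon}>0,
\]
(recall ${\mathfrak m}_{\varepsilon/2}={\mathfrak k}_{\varepsilon/2}=0$ here) and, in the last clause of that theorem, is $K$-contact hence Sasakian exactly when $q_{\varepsilon}=1$. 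By Theorem \ref{teinstein1}, every $G$-invariant Einstein metric is of the form ${\bf g}^{\alpha}$ given in (\ref{einstein}) for some $\alpha>0$.

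Step one is to impose simultaneously both expressions: comparing the coefficients on ${\mathfrak a}$, ${\mathfrak m}_{\varepsilon}$ and ${\mathfrak k}_{\varepsilon}$ one obtains the system
\[
\kappa^{2}=\alpha,\qquad \frac{\kappa}{2q_{\varepsilon}}=\frac{\kappa q_{\varepsilon}}{2}=\frac{n\alpha}{2(n-1)}.
\]
The second equality forces $q_{\varepsilon}^{2}=1$, so $q_{\varepsilon}=1$, and then $\kappa=\frac{n\alpha}{n-1}$ combined with $\kappa^{2}=\alpha$ yields $\alpha=\frac{(n-1)^{2}}{n^{2}}$ and $\kappa=\frac{n-1}{n}$. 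This produces the unique candidate and, by construction, this ${\bf g}$ is both contact metric and Einstein, establishing existence and uniqueness at once. Because $q_{\varepsilon}=1$, Theorem \ref{maingeneral1} asserts that $(\xi,{\bf g})$ is $K$-contact and in fact Sasakian, so the resulting metric is Sasakian-Einstein.

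For the final clause, set $n=2$: then $\alpha=1/4$ and $\frac{n}{2(n-1)}=1$, so ${\bf g}_{o_{H}}=\frac{1}{4}\langle\cdot,\cdot\rangle$ on all of $\overline{\mathfrak m}$. By Corollary \ref{c2} this metric has constant sectional curvature $c=\frac{1}{4\alpha}=1$, and one recognises it as the $3$-Sasakian metric of Theorem \ref{t1} (Remark \ref{r1}) under the identification $T_{r}\mathbb{S}^{2}\cong T_{r}\mathbb{C}\mathbf{P}^{1}$, noting that the ${\mathfrak m}_{\varepsilon/2}\oplus{\mathfrak k}_{\varepsilon/2}$ block of (\ref{ff}) is absent when $n=1$. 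No serious obstacle is anticipated: the argument is essentially an intersection of two classification results already proved, and the main point to double-check is that the coefficient on ${\mathfrak m}_{\varepsilon}\oplus{\mathfrak k}_{\varepsilon}$ in the Einstein metric is forced to be equal on the two summands, which is exactly what pins down $q_{\varepsilon}=1$ and thereby delivers the Sasakian property for free.
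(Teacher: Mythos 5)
Your proposal is correct and follows essentially the same route as the paper: intersect the classification of invariant contact metrics (Theorem \ref{maingeneral1}) with the classification of invariant Einstein metrics (Theorem \ref{teinstein1}), so that matching coefficients forces $q_{\varepsilon}=1$ (hence Sasakian) and $\kappa=\frac{n-1}{n}$. The one point to watch is that for $n=2$ Theorem \ref{maingeneral1} does not apply (the invariant contact metrics are then those of Theorem \ref{maingeneralCP1} with ${\mathfrak m}_{\varepsilon/2}={\mathfrak k}_{\varepsilon/2}=0$), so uniqueness in that case requires checking, via the ${\mathbb C}{\bf P}^{1}$ identification and Table II, that $\alpha\langle\cdot,\cdot\rangle$ is a contact metric only for $\alpha=\frac{1}{4}$ --- which is precisely how the paper disposes of $n=2$, and which your closing paragraph gestures at.
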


\begin{proof} From Theorem \ref{maingeneral} case (i), if $G/K\in \{{\mathbb S}^{n},{\mathbb R}{\bf P}^{n}\;(n\geq 3)\},$ any $G$-invariant contact metric structure $(\xi,{\bf g})$  on $T_{r}(G/K)$ is determined by $\xi_{o_{H}} = \pm \frac{1}{\kappa}X$ and ${\bf g}_{o_{H}} = \kappa^{2}\langle\cdot,\cdot\rangle_{\mathfrak a} + \frac{\kappa}{2q_{\varepsilon}}\langle\cdot,\cdot\rangle_{{\mathfrak m}_{\varepsilon}} + \frac{\kappa q_{\varepsilon}}{2}\langle\cdot,\cdot\rangle_{{\mathfrak k}_{\varepsilon}},$ for some $\kappa, q_{\varepsilon}>0.$ Then, using Theorem \ref{teinstein1}, ${\bf g}$ is Einstein if and only if $q_{\varepsilon} = 1,$ or equivalently $(\xi,{\bf g})$ is Sasakian, and $\kappa = \frac{n-1}{n}.$ For $n =2,$ we apply Theorem \ref{maingeneral} case (ii) and again Theorem \ref{teinstein1}.
\end{proof}

\end{document}